\@date \else {\vskip3ex \centering\footnotesize\@date\par\vskip1ex}\fi
\else \@footnotetext{\@setdate}\fi}
\newcommand*\patchAmsMathEnvironmentForLineno[1]{%
  \expandafter\let\csname old#1\expandafter\endcsname\csname #1\endcsname
  \expandafter\let\csname oldend#1\expandafter\endcsname\csname end#1\endcsname
  \renewenvironment{#1}%
     {\linenomath\csname old#1\endcsname}%
     {\csname oldend#1\endcsname\endlinenomath}}%
\newcommand*\patchBothAmsMathEnvironmentsForLineno[1]{%
  \patchAmsMathEnvironmentForLineno{#1}%
  \patchAmsMathEnvironmentForLineno{#1*}}%
\newtheorem{theorem}{Theorem}[section]
\newtheorem{lemma}[theorem]{Lemma}
\theoremstyle{definition}
\newtheorem{define}[theorem]{Definition}
\newtheorem{remark}[theorem]{Remark}
\newtheorem{proposition}[theorem]{Proposition}
\newtheorem{assumption}[theorem]{Assumption}
\newcommand\ba[1]{\begin{align}\label{#1}}
\newcommand\ea{\end{align}}
\newcommand\bas{\begin{align*}}
\newcommand\eas{\end{align*}}
\newcommand\ee{\end{equation}}
\newcommand\be{\begin{equation}}
\newcommand\ees{\end{equation*}}
\newcommand\bes{\begin{equation*}}
\mathchardef\emptyset="001F
\newcommand{\e}{\varepsilon}
\newcommand{\om}{\omega}
\newcommand{\R}{{\mathbb R}}
\newcommand{\rn}{{{\R}^N}}
\newcommand{\rnt}{{{\R}^2}}
\newcommand{\wto}{\rightharpoonup}
\newcommand{\wtos}{\mathrel{\mathop{\rightharpoonup}\limits^*}}
\newcommand{\CC}{{\mathcal C}}
\newcommand{\N}{{\mathbb{N}}}
\newcommand\pdeor{\mathscr{A}}
\newcommand\norm[1]{\left\|#1\right\|}
\newcommand{\abs}[1]{\left\lvert#1\right\rvert} 
\newcommand{\fsp}[1]{\left(#1\right)} 
\newcommand{\fmp}[1]{\left[#1\right]}
\newcommand{\flp}[1]{\left\{#1\right\}}
\newcommand{\vp}{\varphi}
\newcommand{\limn}{\lim_{n\rightarrow\infty}}
\newcommand{\B}{{\mathscr B}}
\newcommand{\divg}{{\operatorname{div}}}
\newcommand{\loc}{{\operatorname{loc}}}
\newcommand{\seqn}[1]{\left\{#1\right\}_{n=1}^\infty}  
\newcommand{\seqk}[1]{\left\{#1\right\}_{k=1}^\infty}
\newcommand{\FR}{{\mathcal R}}
\newcommand{\A}{\mathscr{A}}
\newcommand{\PGV}{{PGV^2_{\alpha,\B}}}
\newcommand{\uab}{{u_{\alpha,\B}}}
\newcommand{\PGVon}{{PGV^2_{\B_n}}}
\newcommand{\PGVk}{{PGV^{k+1}_{\alpha,\B[k]}}}
\newcommand{\Pl}{{P_\lambda}}
\newcommand{\mnn}{{\R^{N\times N}}}
\newcommand{\mnnt}{{\mathbb M^{2\times 2}}}
\newcommand{\PGVt}{{PGV_\B^2}}
\newcommand\M{\mathbb M}
\definecolor{CMUred}{RGB}{153,0,0}
\definecolor{CMUgreen}{RGB}{0,135,81}
\definecolor{CMUblue}{RGB}{0,51,127}
\definecolor{Pblue}{RGB}{87,158,208}
\newcommand{\argmin}{{\operatorname{arg\,min}}}
\newcommand{\ta}{{\tilde \alpha}}
\newcommand{\mb}{{\mathcal{M}_b}}
\newcommand{\liminfn}{{\liminf_{n\to\infty}}}
\newcommand{\limsupn}{{\limsup_{n\to\infty}}}
\newcommand{\mbqmnn}{{\mb(Q;\mnn)}}
\newcommand{\mbqmnnt}{{\mb(Q;\mnnt)}}
\newcommand{\mbqrn}{{\mb(Q;\rn)}}
\newcommand{\Aep}{\A}
\def\argmin{\mathop{\rm arg\, min}}
\DeclareRobustCommand{\om}{\omega}
\numberwithin{equation}{section}
\newcommand{\normmm}[1]{{\left\vert\kern-0.25ex\left\vert\kern-0.25ex\left\vert #1 
    \right\vert\kern-0.25ex\right\vert\kern-0.25ex\right\vert}}
\title{Adaptive image processing: first order PDE constraint regularizers and a bilevel training scheme}
\author{Elisa Davoli }
 \address[Elisa Davoli ]{Faculty of Mathematics, University of Vienna\\ Oskar-Morgenstern-Platz 1, A-1090 Vienna, Austria}
 \email[Elisa Davoli ] {elisa.davoli@univie.ac.at}
\author[Irene Fonseca]{Irene Fonseca}
 \address[Irene Fonseca]{Center of Nonlinear Analysis, Department of Mathematics,\\ Carnegie Mellon University, 5000 Forbes Avenue, Pittsburgh, PA, 15213, USA}
 \email[I. Fonseca] {fonseca@andrew.cmu.edu}
\author[P. Liu] {Pan Liu}
 \address[Pan Liu]{Centre of Mathematical Imaging and Healthcare,\\ 
Department of Pure Mathematics and Mathematical Statistics, \\
 University of Cambridge, Wilberforce Road, Cambridge CB3 0WA, UK}
 \email[P. Liu] {panliu.0923@maths.cam.ac.uk}
\subjclass[2010]{26B30, 94A08, 	47J20}
\keywords{image processing, optimal training scheme, first order differential operators, $\Gamma$-convergence}
\date{\today}                                           
\begin{document}
\begin{abstract}
A bilevel training scheme is used to introduce a novel class of regularizers, providing a unified approach to standard regularizers $TV$, $TGV^2$ and $NsTGV^2$. Optimal parameters and regularizers are identified, and the existence of a solution for any given set of training imaging data is proved by $\Gamma$-convergence. Explicit examples and numerical results are given.
\end{abstract}
\maketitle
\tableofcontents

\thispagestyle{empty}

\section{Introduction}

Image processing aims at the reconstruction of an original ``clean" image starting from a ``distorted one", namely from a datum which has been deteriorated or corrupted by noise effects or damaged digital transmission. The key idea of variational formulations in image-processing consists in rephrasing this problem as the minimization of an underlying functional of the form
\be
\mathcal I(u):=\norm{u-u_\eta}_{L^2(Q)}^2+\mathcal R_\alpha(u),
\ee
where $u_\eta$ is a given corrupted image, $Q:=(-1/2,1/2)^N$ is the $N$-dimensional unit square (in image processing we usually take $N=2$, i.e., $Q$ represents the domain of a square image) and $\mathcal R_\alpha$ is a regularizing functional, with $\alpha$ denoting the intensity parameter (which could be a positive scalar or a vector). Minimizing the functional $\mathcal I$ allows to reconstruct a ``clean" image based on the functional properties of the regularizer $\mathcal R_\alpha$.\\\\
Within the context of image {denoising}, for a fixed regularizer $\FR_{\alpha}$ we seek to identify 
\be
u_{\alpha,\FR}:=\argmin\flp{\norm{u-u_\eta}_{L^2(Q)}^2+\mathcal R_\alpha(u):\,\, u\in L^2(Q)}.
\ee
An example is the {ROF} model (\cite{rudin1992nonlinear}), in which the regularizer is taken to be $\FR_\alpha(u):=\alpha TV(u)$, where $TV(u)$ is the {total variation} of $u$ (see, e.g. \cite[Chapter 4]{ambrosio.fusco.pallara}), $\alpha\in\R^+$ is the tuning parameter, and we have
\be\label{intro_denoisy2}
u_{\alpha,TV}:=\argmin\flp{\norm{u-u_\eta}_{L^2(Q)}^2+\alpha TV(u):\,\, u\in L^2(Q)}.
\ee
In view of the coercivity of the minimized functional, the natural class of competitors in \eqref{intro_denoisy2} is $BV(Q)$, the space of real-valued functions of bounded variation in $Q$.
The trade-off between the denoising effects of the {ROF}-functional and its feature-preservation capabilities is encoded by the tuning parameter $\alpha\in \R^+$. Indeed, high values of $\alpha$ lead to a strong penalization of the total variation of $u$, which in turn determines an over-smoothing effect and a resulting loss of information on the internal edges of the reconstructed image, while small values of $\alpha$ cause an unsatisfactory noise removal.\\\\
In order to determine the optimal $\alpha$, say $\ta$, in \cite{reyes2015structure, MR3592840} the authors proposed a {bilevel training scheme}, which was originally introduced in Machine Learning and later adopted by the imaging processing community (see \cite{chen2013revisiting, chen2014insights, domke2012generic,tappen2007learning}). The bilevel training scheme is a semi-supervised training scheme that optimally adapts itself to the given ``clean data". To be precise, let $(u_\eta,u_c)$ be a pair of given images, where $u_\eta$ represents the corrupted version and $u_c$ stands for the original version, or the ``clean" image. This training scheme searches for the optimal $\alpha$ so that the recovered image $u_{\alpha, TV}$, obtained in \eqref{intro_denoisy2}, minimizes the $L^2$-distance from the clean image ${u_c}$. An implementation of such training scheme, denoted by $(\mathcal T)$, equipped with total variation $TV$ is
\begin{flalign}
\text{Level 1. }&\,\,\,\,\,\,\,\,\,\,\,\,\,\,\,\,\,\,\,\,\,\,\,\,\,\,\,\,\,\,\,\,\,\,\,\,\,\ta\in\argmin\flp{\norm{u_{\alpha,TV}-u_c}_{L^2(Q)}^2:\,\,\alpha\in\R^+},&\tag{$\mathcal T$-L1}\label{training_level_bi}\\
\text{Level 2. }&\,\,\,\,\,\,\,\,\,\,\,\,\,\,\,\,\,\,\,\,\,\,\,\,\,\,\,\,\,\,\,\,\,\,\,\,\,u_{\alpha,TV}:=\argmin\flp{\norm{u-u_\eta}_{L^2(Q)}^2+\alpha TV(u):\,\,u\in BV(Q)}.\tag{$\mathcal T$-L2}\label{result_level_bi}&
\end{flalign}
An important observation is that the geometric properties of the regularizer $TV$ play an essential role in the identification of the reconstructed image $u_{\alpha, TV}$ and may lead to a loss of some fine texture in the image. The choice of a given regularizer $\FR_{\alpha}$ is indeed a crucial step in the formulation of the denoising problem: on the one hand, the structure of the regularizer must be such that the removal of undesired noise effects is guaranteed, and on the other hand the disruption of the essential details of the image must be prevented. For this reasons, various choices of regularizers have been proposed in the literature. For example, the second order total generalized variation, $TGV^2_\alpha$,  defined as
\begin{multline}\label{tgv_eq_def}
TGV^2_{\alpha}(u):=\inf\flp{\alpha_0\abs{D u-v}_{\mb(Q;\rn)}+\alpha_1\abs{(\operatorname{sym}\nabla) v}_{\mb(Q;\,\mnn)}:\right.\\
\left.v\in L^1(Q;\rn),\,(\operatorname{sym}\nabla) v\in \mb(Q;\mnn)},
\end{multline}
has been characterized in \cite{bredies2010total}, where $Du$ denotes the distributional gradient of $u$, $(\operatorname{sym}\nabla) v:=(\nabla v+\nabla^Tv)/2$,  $\mb(Q;\mnn)$ is the space of bounded Radon measures in $Q$ with values in $\mnn$, $\alpha_0$ and $\alpha_1$ are positive tuning parameters, and $\alpha:=(\alpha_0,\alpha_1)$. A further commonly used regularizer is the non-symmetric counterpart of the $TGV^2_{\alpha}$- seminorm defined above, namely the $NsTGV^2_{\alpha}$ functional (see e.g., \cite{valkonen}), which is known to provide in general more accurate results compared to $TGV^2_{\alpha}$ but with a higher computational cost. It has been shown that a reconstructed image presents several perks and drawbacks according to the different regularizers. An important question is thus how to identify the regularizer that might provide the best possible image denoising for a given class of corrupted images.\\

To address this problem, it is natural to use a straightforward modification of scheme $(\mathcal T)$ by inserting different regularizers inside the training level 2 in \eqref{result_level_bi}. Namely, we set
\begin{flalign}
\text{Level 1. }&\,\,\,\,\,\,\,\,\,\,\,\,\,\,(\tilde \FR_\alpha):=\argmin\flp{\norm{u_{\alpha,\FR}-u_c}_{L^2(Q)}^2:\,\,\FR_\alpha\in\flp{\alpha TV,TGV^2_{\alpha}, NsTGV_\alpha^2 }},&\,\label{intro_finite_train}\\
\text{Level 2. }&\,\,\,\,\,\,\,\,\,\,\,\,\,\,u_{\alpha,\FR}:=\argmin\flp{\norm{u-u_\eta}_{L^2(Q)}^2+\FR_\alpha(u):\,\,u\in L^1(Q)}.&
\end{flalign}
However, the finite number of possible choices for the regularizer within this training scheme would imply that the optimal regularizer $\tilde \FR_{\alpha}$ would simply be determined by performing scheme $(\mathcal T)$ finitely many times, at each time with a different regularizer $\FR_{\alpha}$. In turn, some possible texture effects for which an ``intermediate" (or interpolated) reconstruction between the one provided by, say, $TGV^2_{\alpha}$ and $NsTGV^{2}_{\alpha}$, might be more accurate, would then be neglected in the optimization procedure. Therefore, one main challenge in the setup of such a training scheme is to give a meaningful interpolation between the regularizers used in \eqref{intro_finite_train}, and also to guarantee that the collection of the corresponding functional spaces exhibits compactness and lower semicontinuity properties.\\\\
The aim of this paper is threefold. First, we propose a novel class of image-processing operators, the PDE-constrained total generalized variation operators, or $PGV^2_{\alpha,\mathscr B}$, defined as
\begin{multline}\label{abextension}
PGV^2_{\alpha,\B}(u):=\inf\flp{\alpha_0\abs{D u-v}_{\mb(Q;\rn)}+\alpha_1\abs{\B v}_{\mb(Q;\,\mnn)}:\right.\\
\left. v\in L^1(Q;\rn),\,\mathscr B v\in \mb(Q;\mnn)},
\end{multline}
for each $u\in L^1(Q;\rn)$, where $\B$ is a linear differential operator (see Section \ref{sec:notation} and Definition \ref{def-tgv-B}) and $\alpha:=(\alpha_0,\alpha_1)$, with $\alpha_0,\,\alpha_1\in (0,+\infty)$. We also define the space of {functions with bounded second order $PGV^2_{\alpha,\B}$-seminorms}
\be
BPGV^2_{\alpha,\B}(Q):=\flp{u\in L^1(Q):\,\,{PGV_{\alpha,\B}^2(u)<+\infty}}.
\ee
Note that if $\B:={\rm sym }\nabla$, then the operator $PGV^2_{\alpha,\B}$ defined in \eqref{abextension} coincides with the operator $TGV^2_\alpha$ mentioned in \eqref{tgv_eq_def}. In fact, we will show that, under appropriate assumptions (see Definition \ref{ready_AB_to_work}), our new operator provides a unified approach to the standard regularizers mentioned in \eqref{intro_finite_train}, generalizing the results in \cite{2018arXiv180201895B} (see Section \ref{uatta_sec}). Moreover, the collection of functionals described in \eqref{abextension} naturally incorporates the recent {PDE}-based approach to image denoising formulated  in \cite{barbu.marinoschi} via nonconvex optimal control problem, thus offering a very general and abstract framework to simultaneously describe a variety of different image-processing techniques.\\

 The second main goal of this article is the study of the training scheme introduced in \eqref{training_level_bi}-\eqref{result_level_bi} that optimizes the trade-off between effective reconstruction and fine image-detail preservation. That is, we propose a new bilevel training scheme that simultaneously yields the optimal regularizer $PGV^2_{\alpha,\B}(u)$ in the class described in \eqref{abextension} and an optimal tuning parameter $\alpha$, so that the corresponding reconstructed image $u_{\alpha,\mathscr B}$, obtained in Level 2 of the $(\mathcal T_\theta^2)$-scheme (see \eqref{ABsolution_map_0_intro} below), minimizes the $L^2$-distance from the original clean image $u_c$. To be precise, in Sections \ref{PABBQ_sec}, \ref{gamma_conv_sec}, and \ref{sec_ts_PGV} we study the improved training scheme $\mathcal T^2_\theta$, for $\theta\in(0,1)$, defined as follows
\begin{flalign}
{\text{Level 1. }}&\,\,\,\,\,\,\,\,\,\,\,\,\,\,\,\,\,\,\,(\tilde \alpha,\tilde\B):= \argmin\flp{\norm{u_c-u_{\alpha,\B}}_{L^2(Q)}^2:\,\,\alpha\in[\theta,1/\theta]^2,\,\,\B\in\Sigma},\tag{$\mathcal T_\theta^2$-L1}\label{ABtraining_0_1_0_intro} &\\
{\text{Level 2. }}&\,\,\,\,\,\,\,\,\,\,\,\,\,\,\,\,\,\,\,u_{\alpha,\B}:=\argmin\flp{\norm{u-u_\eta}_{L^2(Q)}^2+ \PGV(u),\,\, u\in PGV_\B^2(Q)}\tag{$\mathcal T_\theta^2$-L2}\label{ABsolution_map_0_intro},&
\end{flalign}
where $\Sigma$ is an infinite collection of first order linear differential operators $\B$ (see Definition \ref{def_class_Pi} and Definition \ref{def_training_set}). We prove the existence of optimal solutions to \eqref{ABtraining_0_1_0_intro} by showing that the functional 
\be
\mathcal I_{\alpha,\B}(u):=\norm{u-u_\eta}_{L^2}^2+ \PGV(u)
\ee
is continuous in the $L^1$ topology, in the sense of $\Gamma$-convergence, with respect to the parameters $\alpha$ and the operators $\B$ (see Theorem \ref{thm:new-Gamma}). A simplified statement of our main result (see Theorem \ref{main_thm}) is the following.
\begin{theorem}\label{main_thm-intro}
Let $\theta\in(0,1)$ be fixed. Then the training scheme $(\mathcal T_\theta^{2})$ admits at least one solution $(\ta, \tilde \B)\in[\theta,1/\theta]^{2}\times \Sigma$, and provides an associated optimally reconstructed image $u_{\ta,\tilde \B}\in BV(Q)$.
\end{theorem}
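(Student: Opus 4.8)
The plan is to run the direct method on Level 1, viewing Level 2 as a parametrized family of strictly convex variational problems and exploiting the $\Gamma$-convergence of Theorem \ref{thm:new-Gamma} to control how the reconstructions $\uab$ depend on $(\alpha,\B)$. As a preliminary step I would record the well-posedness of Level 2: for every fixed $(\alpha,\B)\in[\theta,1/\theta]^2\times\Sigma$ the functional $\mathcal I_{\alpha,\B}$ is proper — testing with $u\equiv 0$, whose $\PGV$-seminorm vanishes, gives $\mathcal I_{\alpha,\B}(\uab)\le\norm{u_\eta}_{L^2(Q)}^2$ — it is strictly convex because of the $L^2$ fidelity term, and it is $L^1(Q)$-lower semicontinuous and coercive thanks to the compactness and lower semicontinuity properties of the spaces $BPGV^2_{\alpha,\B}(Q)$ established earlier in the paper. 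Hence $\uab$ in \eqref{ABsolution_map_0_intro} exists and is unique for every admissible pair, and satisfies $\mathcal I_{\alpha,\B}(\uab)\le\norm{u_\eta}_{L^2(Q)}^2$.

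Next I would establish equi-coercivity of the family $\{\uab\}$ over the whole admissible set. From $\mathcal I_{\alpha,\B}(\uab)\le\norm{u_\eta}_{L^2(Q)}^2$ one gets $\norm{\uab}_{L^2(Q)}\le 2\norm{u_\eta}_{L^2(Q)}$ together with $\PGV(\uab)\le\norm{u_\eta}_{L^2(Q)}^2$; since $\alpha_0,\alpha_1\ge\theta$, a suitable near-optimal competitor $v$ in the definition of $\PGV(\uab)$ obeys $\theta\big(\abs{D\uab-v}_{\mbqrn}+\abs{\B v}_{\mbqmnn}\big)\le 2\norm{u_\eta}_{L^2(Q)}^2$. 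Using a Poincaré-type inequality for the $PGV$-operators that holds uniformly in $\B\in\Sigma$ (a consequence of the structural assumptions defining the training class, Definitions \ref{def_class_Pi}, \ref{def_training_set}), this yields a constant $C=C\big(\theta,\norm{u_\eta}_{L^2(Q)},\Sigma\big)$ with $\norm{\uab}_{BV(Q)}\le C$ for all $(\alpha,\B)\in[\theta,1/\theta]^2\times\Sigma$. In particular $\{\uab\}$ is relatively compact in $L^1(Q)$ and bounded in $L^2(Q)$.

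Then I would take a minimizing sequence $(\alpha_n,\B_n)$ for \eqref{ABtraining_0_1_0_intro}. By compactness of $[\theta,1/\theta]^2$ and of $\Sigma$ (the latter being parametrized by bounded coefficients of first-order operators), along a subsequence $\alpha_n\to\ta\in[\theta,1/\theta]^2$ and $\B_n\to\tilde\B\in\Sigma$ in the topology for which Theorem \ref{thm:new-Gamma} applies, so that $\mathcal I_{\alpha_n,\B_n}\to\mathcal I_{\ta,\tilde\B}$ in the sense of $\Gamma$-convergence in $L^1(Q)$. Combined with the equi-coercivity of the previous step, the fundamental theorem of $\Gamma$-convergence implies that, up to a further subsequence, $\uabn\to\bar u$ in $L^1(Q)$ with $\bar u$ a minimizer of $\mathcal I_{\ta,\tilde\B}$; by strict convexity $\bar u=u_{\ta,\tilde\B}$. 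The uniform $L^2$-bound upgrades this to weak convergence in $L^2(Q)$, and the uniform $BV$-bound together with lower semicontinuity of the total variation gives $u_{\ta,\tilde\B}\in BV(Q)$. Finally, weak $L^2$-lower semicontinuity of $u\mapsto\norm{u_c-u}_{L^2(Q)}^2$ yields $\norm{u_c-u_{\ta,\tilde\B}}_{L^2(Q)}^2\le\liminfn\norm{u_c-\uabn}_{L^2(Q)}^2$, and the right-hand side equals the infimum in \eqref{ABtraining_0_1_0_intro}; since $(\ta,\tilde\B)$ is admissible, it solves Level 1, with associated reconstruction $u_{\ta,\tilde\B}\in BV(Q)$.

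I expect the main obstacle to be concentrated in the passage to the limit of the third step, and within it two points: first, matching the notion of convergence $\B_n\to\tilde\B$ on $\Sigma$ with the one required for $\Gamma$-convergence in Theorem \ref{thm:new-Gamma}, together with the sequential compactness of $\Sigma$ for precisely that topology — this is exactly where the construction of the training class $\Sigma$ is designed to pay off; and second, the uniform-in-$\B$ Poincaré-type estimate underlying the equi-coercivity of the second step. Once these two ingredients are in place, the remainder is the standard direct-method plus $\Gamma$-convergence argument.
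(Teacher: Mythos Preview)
Your proposal is correct and follows essentially the same route as the paper: a minimizing sequence for Level~1, compactness of $[\theta,1/\theta]^2\times\Sigma$ in the $\ell^\infty$-topology, the $\Gamma$-convergence Theorem~\ref{thm:new-Gamma} to pass the minimizers of Level~2 to the limit, and weak $L^2$-lower semicontinuity for the cost. The two technical ingredients you single out---the uniform-in-$\B$ Poincar\'e/Korn inequality and the matching of the topology on $\Sigma$ with the one in Theorem~\ref{thm:new-Gamma}---are precisely what the paper supplies in Proposition~\ref{poincare_B_uniformly} (and the resulting norm equivalence \eqref{uinform_equivalent_norm} in Proposition~\ref{compact_semi_para}) and in Definition~\ref{def_training_set}; with these in hand, the paper packages the equi-coercivity and the convergence of minimizers directly into the compactness clause of Theorem~\ref{thm:new-Gamma}, so its proof of Theorem~\ref{main_thm} is shorter but structurally identical to yours.
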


 The collection $\Sigma$ of operators $\B$ used in \eqref{ABtraining_0_1_0_intro} has to satisfy several  natural regularity and ellipticity assumptions, which are fulfilled by $\B:=\nabla$ and $\B:=\operatorname{sym}\nabla$ (see Section \ref{sce_better_other}). The general requirements on $\B$ that allow scheme $(\mathcal T^2_\theta)$ to have a solution are listed on Assumptions \ref{assum_basic_B} and \ref{assum_basic_BPi}. Later in Section \ref{fsaqs_sec}, as the third main contribution of this article, we provide in Definition \ref{ready_AB_to_work} a collection of operators $\B$ satisfying Assumptions \ref{assum_basic_B} and \ref{assum_basic_BPi}. In particular, we prove the following (see Theorem \ref{thm_PiA_eq_Pi}).
 
 \begin{theorem}
 \label{thm:main2-intro}
 Let $\mathscr{B}$ be a first order differential operator such that there exists a differential operator $\mathscr{A}$ for which $(\mathscr{A},\mathscr{B})$ is a training operator pair according to the ellipticity assumptions in Definition \ref{ready_AB_to_work}. Then $\mathscr{B}$ satisfies Assumptions \ref{assum_basic_B} and \ref{assum_basic_BPi}. 
 \end{theorem}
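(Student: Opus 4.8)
The plan is to take apart the ellipticity requirements defining a training operator pair in Definition~\ref{ready_AB_to_work} and to verify the structural properties in Assumptions~\ref{assum_basic_B} and~\ref{assum_basic_BPi} one clause at a time. Recall that, since $(\mathscr{A},\mathscr{B})$ is a training operator pair, $\mathscr{B}$ is a homogeneous first order constant-coefficient differential operator whose symbol $\mathbb{B}(\xi)$ is injective for every $\xi\neq 0$ (indeed $\mathbb{C}$-elliptic), while $\mathscr{A}$ is a companion operator with $\mathscr{A}\mathscr{B}=0$ for which the induced symbol complex is exact off the origin and the quantitative constant-rank/ellipticity bounds of Definition~\ref{ready_AB_to_work} hold. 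From the $\mathbb{C}$-ellipticity of $\mathscr{B}$ one first obtains that the space $\mathcal N_{\mathscr{B}}$ of distributional solutions of $\mathscr{B}v=0$ on $Q$ is a finite dimensional space of polynomials; this furnishes the kernel and its $L^2(Q)$-orthogonal projection $\Pi_{\mathscr{B}}$, which are part of the data required by the assumptions.

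The second step, and the analytic core of the argument, is the generalized Poincar\'e--Korn inequality
\[
\norm{v-\Pi_{\mathscr{B}}v}_{L^{N/(N-1)}(Q)}\le C\,\abs{\mathscr{B}v}_{\mb(Q;\mnn)}
\]
valid for every $v\in L^1(Q;\rn)$ with $\mathscr{B}v\in\mb(Q;\mnn)$, with $C$ depending only on the ellipticity constants of Definition~\ref{ready_AB_to_work}; here the companion operator $\mathscr{A}$ and the exactness $\mathscr{A}\mathscr{B}=0$ are used to realize $\mathscr{B}$ as a potential for $\mathscr{A}$-free fields and to transport the corresponding $L^1$-type estimates, and the inequality itself is then produced by factoring out $\mathcal N_{\mathscr{B}}$ through $\Pi_{\mathscr{B}}$ and running a compactness--contradiction scheme. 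Granting this, the remaining clauses of Assumption~\ref{assum_basic_B} follow: (i) if $PGV^2_{\alpha,\mathscr{B}}(u)<+\infty$, then choosing a near-optimal $v$ in~\eqref{abextension} and splitting $Du=(Du-v)+v$ forces $Du\in\mb(Q;\rn)$, hence $u\in BV(Q)$ and the continuous embedding $BPGV^2_{\alpha,\mathscr{B}}(Q)\hookrightarrow BV(Q)$; (ii) lower semicontinuity of $u\mapsto PGV^2_{\alpha,\mathscr{B}}(u)$ along $L^1$-convergence, a consequence of the weak-$*$ closedness of the constraint $\mathscr{B}v\in\mb(Q;\mnn)$ (a distributional identity) together with the lower semicontinuity of the total variations in~\eqref{abextension}; (iii) $L^1$-sequential compactness of sequences with uniformly bounded $L^1$ norm and $PGV$-energy, the subtle point being that $\Pi_{\mathscr{B}}v_n$ is not controlled by $\abs{\mathscr{B}v_n}$, so one must exploit the coupling with $Du_n-v_n$ in~\eqref{abextension} to pin down the kernel component, after which the Poincar\'e inequality upgrades the bound to $L^{N/(N-1)}$ and yields precompactness in $L^1$, with the limit lying in $BV(Q)$.

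Finally, to obtain Assumption~\ref{assum_basic_BPi} I would upgrade the estimates above to hold \emph{uniformly} over the class $\Pi$ of Definition~\ref{def_class_Pi} (hence over the training set $\Sigma$): the ellipticity bounds in Definition~\ref{ready_AB_to_work} are quantitative and stable under perturbation, so the dimension of $\mathcal N_{\mathscr{B}}$ is locally constant along $\Pi$, the projections $\Pi_{\mathscr{B}}$ vary continuously with $\mathscr{B}$, and the Poincar\'e constant $C$ is bounded uniformly on $\Pi$ by a further compactness argument using the continuity of $\mathscr{B}\mapsto\mathbb{B}(\cdot)$ and the structure of $\Pi$; combined with the closure and lower semicontinuity properties just established, this gives precisely Assumptions~\ref{assum_basic_B} and~\ref{assum_basic_BPi}, i.e. the content of Theorem~\ref{thm_PiA_eq_Pi}. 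I expect the main obstacle to be twofold: proving the $L^1$/measure Poincar\'e--Korn inequality for a \emph{general} elliptic first order pair $(\mathscr{A},\mathscr{B})$ rather than only $\nabla$ or $\operatorname{sym}\nabla$, and making the kernels, projections and constants behave uniformly and continuously across the infinite family $\Sigma$ --- it is exactly at these two points that the auxiliary operator $\mathscr{A}$ and the exactness built into the notion of training operator pair are indispensable.
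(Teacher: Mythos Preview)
Your proposal rests on two misreadings of the paper's definitions, and these make the argument not go through as written.

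\textbf{First, the role of $\mathscr{A}$.} You assume that $(\mathscr{A},\mathscr{B})$ is a compatible pair in the sense of the $\mathscr{A}$-free framework, i.e.\ $\mathscr{A}\mathscr{B}=0$ with an exact symbol complex, and you use this to ``realize $\mathscr{B}$ as a potential for $\mathscr{A}$-free fields''. But Definition~\ref{ready_AB_to_work} contains no such condition: $\mathscr{A}$ maps $\mathcal{D}'(U;\R^N)$ to $\mathcal{D}'(U;\R^N)$, so the composition $\mathscr{A}\mathscr{B}$ is not even well-formed. The actual content of Definition~\ref{ready_AB_to_work} is (1) existence of a fundamental solution $P_\lambda$ of $\mathscr{A}$ with $\mathscr{A}P_\lambda=\lambda\delta$ and controlled $L^1$-moduli of continuity of its $(d-1)$-th derivatives, and (2) an integration-by-parts estimate $\norm{(\mathscr{A}u)_i\ast v_i}_{L^1}\le C_{\mathscr{A}}\big[\sum_{|a|\le d-1}\norm{\partial^a u}_{L^1}\big]\abs{\mathscr{B}v}_{\mathcal{M}_b}$. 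The operator $\mathscr{A}$ is not an annihilator but a device for representing the identity via convolution.

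\textbf{Second, the content of Assumptions~\ref{assum_basic_B} and~\ref{assum_basic_BPi}.} These are statements about the space $BV_{\mathscr{B}}$ (Banach structure, strict density of $C^\infty$, compact embedding into $L^1$, and the analogous compactness along sequences $(\mathscr{B}_n,v_n)$), not about $PGV^2_{\alpha,\mathscr{B}}$. The properties you list under (i)--(iii) --- equivalence of $BPGV$ with $BV$, lower semicontinuity of $PGV$, compactness for $PGV$-bounded sequences --- are Propositions~\ref{weak_equi_norm}, \ref{prop:min-exist}, \ref{compact_semi_para}, which are \emph{consequences} of Assumptions~\ref{assum_basic_B}--\ref{assum_basic_BPi}, not the assumptions themselves.

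\textbf{What the paper actually does.} The analytic core is Proposition~\ref{fdmt_thm_BV}: compactness of $BV_{\mathscr{B}}(Q;\R^N)\hookrightarrow L^1(Q;\R^N)$ is obtained via the Kolmogorov--Riesz criterion. One writes, using $\mathscr{A}P_\lambda=\lambda\delta$,
\[
\tau_h(\lambda_i u_i)-\lambda_i u_i=\big(\mathscr{A}(\tau_h P_\lambda-P_\lambda)\big)_i\ast u_i,
\]
and then invokes Assertion~2 of Definition~\ref{ready_AB_to_work} to bound the $L^1$-norm of the right-hand side by $C_{\mathscr{A}}\,M_{\mathscr{A}}(h)\,\abs{\mathscr{B}u}_{\mathcal{M}_b}$, with $M_{\mathscr{A}}(h)\to 0$ as $|h|\to 0$ by Assertion~1c. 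No Poincar\'e--Korn inequality enters here. The same computation, with the \emph{same} $\mathscr{A}$ and $P_\lambda$, gives the uniform-in-$n$ translation estimate needed for Assumption~\ref{assum_basic_BPi} (Proposition~\ref{cpt_lsc_ub}); uniformity is automatic because $C_{\mathscr{A}}$ and $M_{\mathscr{A}}(h)$ depend only on $\mathscr{A}$.

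A route through $\mathbb{C}$-ellipticity and a Korn--Poincar\'e inequality for $\mathscr{B}$ alone (without invoking $\mathscr{A}$ at all) is conceivable via the results of \cite{bbddlgftfb2017}, but that is not what Definition~\ref{ready_AB_to_work} encodes, and your proposal as stated relies on an exactness hypothesis that is simply absent.
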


Finally, in Section \ref{uatta_sec} we give several explicit examples to show that our class of regularizers $PGV^2_{\alpha,\B}$ includes the seminorms $TGV^2_{\alpha}$ and $NsTGV^2_{\alpha}$, as well as $TV$-variant seminorms and smooth interpolations between them. \\\\
We remark that the task of determining not only the optimal tuning parameter but also the optimal regularizer for given training image data $(u_\eta,u_c)$, has been undertaken in \cite{liu2016weightedreg} where we have introduced one dimensional real order $TGV^r$ regularizers, $r\in[1,+\infty)$, as well as a bilevel training scheme that simultaneously provides the optimal intensity parameters and order of derivation for one-dimensional signals. Forthcoming work in this direction will be found in \cite{liulud2019Image,2018arXiv180506761L}.\\\\
Our analysis is complemented by numerical simulations of the proposed bilevel training scheme. Although this work focuses mainly on the theoretical analysis of the operators $\PGV$ and on showing the existence of optimal results for the training scheme $(\mathcal T^2)$, in Section \ref{sec:num} a primal-dual algorithm for solving \eqref{ABsolution_map_0_intro} is discussed, and some preliminary numerical examples, such as image denoising, are provided.\\\\
With this article we initiate our study of the combination of {PDE}-constraints and bilevel training schemes in image processing. Our follow-up work will include, but is not limited to, the following two topics: 
\begin{itemize}
\item
the construction of a finite grid approximation in which the optimal result $(\tilde\alpha,\tilde\B)$ for the training scheme $(\mathcal T^2_{\theta})$ can be efficiently determined, with an estimation of the approximation accuracy;
\item
 spatially dependent differential operators and multi-layer training schemes. This will allow to specialize the regularization according to the position in the image, providing a more accurate analysis of complex textures and of images alternating areas with finer details with parts having sharpest contours (see also \cite{MR3723325}). 
\end{itemize}

This paper is organized as follows: in Section \ref{sec:notation} we collect some notations and preliminary results. In Section \ref{PABBQ_sec} we analyze the main properties of the $PGV^2_{\alpha,\B}$-seminorms. The $\Gamma$-convergence result and the bilevel training scheme are the subjects of Sections \ref{gamma_conv_sec} and \ref{sec_ts_PGV}, respectively. Section \ref{fsaqs_sec} is devoted to the analysis of the space $BV_{\mathscr{B}}$ for suitable differential operators $\B$. The numerical implementation of some explicit examples is performed in Section \ref{sec:num}.
\section{Notations and preliminary results}\label{sec:notation}
 We collect below some notation that will be adopted in connection with differential operators. Let $N\in \N$ be given, and let $Q:=(-1/2,1/2)^N$ the unit open cube in $\rn$. $\mathbb M^{N^l}$ is the space of real tensors of order $N\times N\times\cdots \times N$ ($l$ times). Note that for $l=1$, the space $\mathbb M^N:=\mathbb M^{N^1}$ is identified with $\rn$, whereas for $l=2$, the collection of second order tensors $\mathbb M^{N^2}$ is identified with the space of $N\times N$ matrices having real entries, usually denoted by $\R^{N\times N}$. For this reason, for $l=2$ and $l=1$ we will directly write $\rn$ and $\R^{N\times N}$ in place of $\mathbb M^N$ and $\mathbb M^{N^2}$, respectively. Also, $\mathcal D'(Q, \mathbb M^{N^l})$ stands for the space of distributions with values in $\mathbb M^{N^l}$, and $\R^N_+$ denotes the set of vectors in $\R^N$ having positive entries.\\

For every open set $U\subset \R^N$, the notation $\B$ will be used for first order differential operators $\B:\mathcal D'(U;\mathbb M^{N^l})\to \mathcal D'(U;\mathbb M^{N^{l+1}})$ defined as 
\be\label{A_quasiconvexity_operator}
\mathscr B v:=\sum_{i=1}^N B^i\frac{\partial}{\partial x_i} v\quad\text{for every }v\in \mathcal D'(U;\mathbb M^{N^l}),
\ee
where $B^i\in \mathbb M^{N^{l+1}}$ for each $i=1,\dots,N$, and where $\frac{\partial}{\partial x_i}$ denotes the distributional derivative with respect to the $i$-th variable. In particular, for $l=1$, there holds $B^i\in \mathbb M^{N^3}$ for each $i=1,\dots,N$, and \eqref{A_quasiconvexity_operator} rewrites as
\be\label{A_quasiconvexity_operator-components}
(\mathscr B v)_{lj}:=\sum_{i,k=1}^N B^i_{ljk}\frac{\partial}{\partial x_i} v_k\quad\text{for every }v\in \mathcal D'(U;\rn),\quad l,j=1,\dots,N.
\ee
For $l=1$, we additionally write the symbol of $\mathscr B$ as 
\be\label{bbB_notation}
\mathbb{B}[\xi]:=\sum_{i=1}^N \xi_i B^i\quad\text{for every }\xi=(\xi_1,\dots,\xi_N)\in \mathbb S^{N-1}.
\ee
Given a sequence $\seqn{\B_n}$ of first order differential operators and a first order differential operator $\B$, with coefficients $\seqn{B^i_n}$ and $B^i$, $i=1,\dots, N$, respectively, we say that $\B_n\to\B$ in $\ell^{\infty}$ if
\be\label{BB_distance}
\norm{\B_n-\B}_{\ell^{\infty}}:=\sum_{i=1}^N\norm{B^i_n-B^i}\to 0,
\ee
where for $B\in \mathbb{M}^{N^l}$, $l\in \N$, $\|B\|$ stands for its Euclidean norm.
\section{The space of functions with bounded $PGV$- seminorm}\label{PABBQ_sec}
\subsection{The space $BV_{\B}$ and the class of admissible operators}
We generalize the standard total variation seminorm by using first order differential operators $\B$: $\mathcal D'(Q;\mathbb M^{N^l})\to \mathcal D'(Q;\mathbb M^{N^{l+1}})$ in the form \eqref{A_quasiconvexity_operator}.
\begin{define}\label{def_BVB}
For every $l\in \N$, we define the space of tensor-valued functions $BV_\B(Q;\mathbb M^{N^l})$ as
\be\label{BVB_norm}
BV_\B(Q;\mathbb M^{N^l}):=\flp{u\in L^1(Q;\mathbb M^{N^l}):\,\, \B u\in \mathcal M_b(Q,\mathbb M^{N^{l+1}})},
\ee
and we equip it with the norm
\be\label{BVB_norm2}
\norm{u}_{BV_\B(Q;\mathbb M^{N^l})}:=\norm{u}_{L^1(Q;\mathbb M^{N^l})}+\abs{\B u}_{\mathcal M_b(Q;\mathbb M^{N^{l+1}})}.
\ee
\end{define}
In order to introduce the class of admissible operators, we first list some assumptions on the operator $\B$.
\begin{assumption}\label{assum_basic_B}
\begin{enumerate}[1.]
\item
The space $BV_\B(Q;\mathbb M^{N^l})$ is a Banach space with respect to the norm defined in \eqref{BVB_norm}.
\item
The space $C^\infty(\overline{Q},\mathbb M^{N^l})$ is dense in $BV_\B(Q;\mathbb M^{N^l})$ in the strict topology. In other words, for every $u\in BV_\B(Q;\mathbb M^{N^l})$ there exists $\seqn{u_n}\subset C^\infty(\bar Q;\mathbb M^{N^l})$ such that 
\be
u_n\to u\text{ strongly in }L^1(Q;\mathbb M^{N^l})\text{ and }\abs{\B u_n}_{\mathcal M_b(Q;\mathbb M^{N^{l+1}})}\to \abs{\B u}_{\mathcal M_b(Q;\mathbb M^{N^{l+1}})}.
\ee
\item
(Compactness) The injection of $BV_\B(Q;\mathbb M^{N^l})$ into $L^1(Q;\mathbb M^{N^l})$ is compact.\\
\end{enumerate}
We point out that, for $l=1$, Requirement 3 above is satisfied for $\B:=\nabla$.
\end{assumption}

The following compactness property applies to a collection of operators $\seqn{\B_n}$.
\begin{assumption}\label{assum_basic_BPi}
Let $\seqn{v_n,\B_n}$ be such that $\B_n$ satisfies Assumption \ref{assum_basic_B} for every $n\in \N$, and
\be\label{eq_assum_basic_BPi}
\sup\flp{\norm{\B_n}_{\ell^{\infty}}+\norm{v_n}_{BV_{\B_n}(Q;\mathbb M^{N^l})}:\,\, n\in\N}<+\infty.
\ee
Then there exist $\B$ and $v\in BV_{\B}(Q;\mathbb M^{N^l})$ such that, up to a subsequence (not relabeled), 
\be\label{eq_v0_unif}
v_n\to v\text{ strongly in }L^1(Q;\mathbb M^{N^l}),
\ee
and 
\be\label{eq_B_n_uniform_bdd2}
{\B_nv_n}\wtos {\B v}\text{ $\text{weakly}^\ast$ in }\mb(Q;\mathbb M^{N^{l+1}}).
\ee
\end{assumption}
\begin{define}\label{def_class_Pi}
For every $l\in \N$, we denote by $\Pi_l$ the collection of operators $\B$ defined in \eqref{A_quasiconvexity_operator}, with finite dimensional null-space $\mathcal N(\B)$, and satisfying Assumption \ref{assum_basic_B}. For simplicity, the class $\Pi_1$ will be indicated by $\Pi$.
\end{define}
In Section \ref{fsaqs_sec} we will exhibit a subclass of operators $\B\in\Pi$ additionally fulfilling the compactness and closure Assumption \ref{assum_basic_BPi}.

\subsection{The $PGV$- total generalized variation}
We introduce below the definition of the PDE-constrained total generalized variation seminorms.
\begin{define}\label{def-tgv-B}
Let $u\in L^1(Q)$ be given. For every $\alpha=(\alpha_0,\alpha_1)\in\R^{2}_+$ and $\B$: $\mathcal D'(Q;\rn)\to \mathcal D'(Q;\mnn)$, $\B\in\Pi$, we consider the seminorm
\be\label{eq:def-tgv-B}
\PGV(u):=\inf\flp{\alpha_0\abs{Du-v}_{\mb(Q;\R^N)}+\alpha_1\abs{\B v}_{\mb(Q;\,\R^{N\times N})}:\,\,v\in BV_\B(Q;\R^N)},
\ee
where the space $BV_\B$ is defined in Definition \eqref{def_BVB}. 
\end{define}
Similarly, we also define the space $\PGVk$ of seminorms of order $k+1\in\N$. We will use the notation $\Pi^k$ to indicate the product $\Pi^k:=\Pi\times\Pi_2\times\cdots\times\Pi_k$.

\begin{define}\label{def-tgv-B_k}
Let $u\in L^1(Q)$ and $k\in \mathbb{N}$ be given. For every $\alpha=(\alpha_0,\ldots,\alpha_k)\in\R^{k+1}_+$ and \mbox{$\B^l$: $L^1(Q;\mathbb M^{N^{l+1}})\to \mathcal D'(Q;\mathbb M^{N^{l+2}})$}, $\B^l\in\Pi_{l+1}$, $l=0,\ldots, k-1$, we consider the seminorm 
\begin{align}
PGV_{\alpha,\B[k]}^{k+1}(u):=&\inf\{\alpha_0\abs{Du-v_0}_{\mb(Q;\R^N)}+\alpha_1\abs{v_1-\B^0 v_0}_{\mb(Q;\,\R^{N\times N})}\label{eq:def-tgv-B_k}\\
&\quad+\alpha_2\abs{v_2-\B^1 v_1}_{\mb(Q;\,\mathbb M^{N^3})}+\cdots+\alpha_{l}\abs{v_{l}-\B^{l-1} v_{l-1}}_{\mb(Q;\,\mathbb M^{N^{l+1}})}\notag\\
&\quad+\cdots+\alpha_{k-1}\abs{v_{k-1}-\B^{k-2} v_{k-2}}_{\mb(Q;\,\mathbb M^{N^k})}\notag\\
&\quad+\alpha_{k}\abs{\B^{k-1} v_{k-1}}_{\mb(Q;\,\mathbb M^{N^{k+1}})}:\,\,v_l\in BV_{\B^l}(Q;\mathbb M^{N^{l+1}}),\,\,l=0,\ldots,k-1\},
\end{align}
where $ \B[k]:=(\B_0,\B_1,\ldots,\B_k)$, and the space $BV_{\B^l}$ is defined in Definition \eqref{def_BVB}. 
\end{define}

We note that for fixed $k\in\N$ and for all $\alpha\in\R^{k+1}_+$, the seminorms $\PGVk$ are topologically equivalent. With a slight abuse of notation, in what follows we will write $PGV^2_{\B}$ instead of $PGV^2_{\alpha,\B}$ and, respectively, $PGV^{k+1}_{\B[k]}$ instead of $PGV^{k+1}_{\alpha,\B[k]}$ whenever the dependence of the seminorm on a specific multi-index $\alpha$ will not be relevant for the presentation of the results. \\

We introduce below the sets of functions with {bounded PDE- generalized variation-seminorms}.

\begin{define}\label{def:BPGV}
We define
\be
BPGV^2_{\B}(Q):=\flp{u\in L^1(Q):\,\, PGV^2_{1,\B}(u)<+\infty},
\ee
and we write
\be
\norm{u}_{BPGV^2_{\B}(Q)}:=\norm{u}_{L^1(Q)}+PGV^2_{1,\B}(u).
\ee

Similarly, we denote by $BPGV^{k+1}_{\B[k]}(Q)$ the space
\be
BPGV^{k+1}_{\B[k]}(Q):=\flp{u\in L^1(Q):\,\, PGV^{k+1}_{1,\B[k]}(u)<+\infty}
\ee
and we write
\be
\norm{u}_{BPGV^{k+1}_{\B[k]}(Q)}:=\norm{u}_{L^1(Q)}+PGV^{k+1}_{1,\B[k]}(u).
\ee
\end{define}
We next show that the $PGV^{k+1}_{\B[k]}$-seminorm and the $TV$-seminorm are equivalent.
\begin{proposition}\label{weak_equi_norm}
 Let $u\in L^1(Q)$ and recall $PGV_{\B[k]}^{k+1}(u)$ from Definition \ref{def-tgv-B_k}. Then for every $k\in \mathbb{N}$, $PGV_{\B[k]}^{k+1}(u)<+\infty$ if and only if $u\in BV(Q)$.
\end{proposition}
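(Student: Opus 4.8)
The plan is to prove the two implications separately, exploiting the definition of $PGV^{k+1}_{\B[k]}$ as an infimum over auxiliary tensor fields $v_0,\dots,v_{k-1}$.

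\textbf{The easy direction ($u\in BV(Q)\Rightarrow PGV^{k+1}_{\B[k]}(u)<+\infty$).} Here I would simply plug in the trivial competitor $v_0=v_1=\cdots=v_{k-1}=0$ into the infimum \eqref{eq:def-tgv-B_k}. Since $\B^l 0 = 0$ for every $l$, all the ``coupling'' terms $\abs{v_l-\B^{l-1}v_{l-1}}_{\mb}$ and the last term $\abs{\B^{k-1}v_{k-1}}_{\mb}$ vanish, leaving only $\alpha_0\abs{Du-0}_{\mb(Q;\R^N)}=\alpha_0\,TV(u)$, which is finite precisely because $u\in BV(Q)$. In particular $PGV^{k+1}_{\B[k]}(u)\le \alpha_0 TV(u)<+\infty$. (One must also check the competitor is admissible, i.e. $0\in BV_{\B^l}(Q;\mathbb M^{N^{l+1}})$, which is immediate.)

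\textbf{The harder direction ($PGV^{k+1}_{\B[k]}(u)<+\infty\Rightarrow u\in BV(Q)$).} Assume the infimum is finite; then (either exactly or up to an arbitrarily small error) there exist $v_0\in BV_{\B^0}(Q;\R^N)$, $v_1\in BV_{\B^1}(Q;\mathbb M^{N^2})$, \dots, $v_{k-1}\in BV_{\B^{k-1}}(Q;\mathbb M^{N^k})$ with all the measure terms in \eqref{eq:def-tgv-B_k} finite. From finiteness of the first term, $Du-v_0\in \mb(Q;\R^N)$, so it suffices to show $v_0\in \mb(Q;\R^N)$ (indeed $v_0\in L^1$ already, but we need the measure bound on $Du$, which follows once $v_0$ has bounded variation-type control — actually we only need $v_0\in L^1$ here since $Du = (Du-v_0)+v_0$ and $Du-v_0$ is already a bounded measure while $v_0\in L^1(Q;\R^N)\subset \mb(Q;\R^N)$). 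Wait — that already gives $Du\in\mb(Q;\R^N)$, hence $u\in BV(Q)$, directly from $v_0\in L^1$! So the genuinely substantive content is that the first term being finite forces $Du-v_0$ to be a bounded measure, which combined with $v_0\in L^1(Q;\R^N)$ yields $Du\in\mb(Q;\R^N)$. Thus the proof of this direction reduces to: extract a near-optimal $v_0$, note $v_0\in L^1(Q;\R^N)$ embeds continuously into $\mb(Q;\R^N)$, and conclude $Du = (Du-v_0)+v_0\in\mb(Q;\R^N)$, i.e. $u\in BV(Q)$.

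\textbf{Main obstacle.} On reflection the only delicate point is a bookkeeping one: one should confirm that the infimum defining $PGV^{k+1}_{\B[k]}$ is over a nonempty admissible set and that finiteness of the infimum indeed produces a competitor with \emph{each individual} term finite (not merely the sum finite for a sequence of competitors whose $v_0$ might be blowing up in $L^1$). Since $v_0\in BV_{\B^0}\subset L^1$ is part of the admissibility constraint, any admissible competitor has $v_0\in L^1(Q;\R^N)$, so no blow-up issue arises and the argument goes through cleanly for any competitor with finite energy; taking one such competitor suffices. Hence I expect the proof to be short: the forward implication is the zero-competitor trick, and the reverse implication is the decomposition $Du=(Du-v_0)+v_0$ together with $L^1(Q;\R^N)\hookrightarrow \mb(Q;\R^N)$. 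If one wishes the sharper quantitative statement (equivalence of seminorms, not just of the sets), one would additionally need to bound $TV(u)$ by $C\,PGV^{k+1}_{\B[k]}(u)$ uniformly, which requires a Poincaré-type inequality controlling $\|v_0\|_{L^1}$ by the higher-order terms; but for the stated set-equivalence Proposition \ref{weak_equi_norm} this is not needed.
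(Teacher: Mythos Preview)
Your proposal is correct and follows essentially the same route as the paper: the zero competitor for the forward direction, and for the reverse direction the decomposition $Du=(Du-v_0)+v_0$ with $v_0\in L^1(Q;\R^N)\hookrightarrow\mb(Q;\R^N)$ extracted from a near-optimal competitor. The paper presents only the case $k=1$ explicitly (taking $\bar v$ with energy at most $PGV^2_\B(u)+1$) and remarks that the general case is analogous, exactly as you do.
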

\begin{proof}
We notice that by setting $v_l=0$, $l=0,1,\ldots,k-1$, in \eqref{eq:def-tgv-B_k}, we have
\be
\label{eq:ineq-bv}
PGV_{\B[k]}^{k+1}(u)\leq  \abs{Du}_{\mb(Q;\R^N)}
\ee
for every $u\in L^1(Q)$. Thus, if $u\in BV(Q)$ then $PGV_{\B[k]}^{k+1}(u)<+\infty$. \\\\
Conversely, assume that $PGV_{\B[k]}^{k+1}(u)<+\infty$. We only study the case in which $k=1$, as the case $k>1$ can be treated in a completely analogous way. Since $\PGVt(u)<+\infty$, there exists $\bar{v}\in BV_{\B}(Q)$ such that
 \be
 \PGVt(u)\geq \abs{Du-\bar{v}}_{\mb(Q;\R^N)}+\abs{\B \bar{v}}_{\mb(Q;\mnn)}-1.
 \ee
 It suffices to observe that
 \be
 \abs{Du}_{\mb(Q;\R^N)}\leq \abs{Du-\bar{v}}_{\mb(Q;\R^N)}+\|\bar{v}\|_{L^1(Q;\R^N)}\leq \PGVt(u)+1+\|\bar{v}\|_{L^1(Q;\R^N)}<+\infty.
 \ee
 \end{proof}
 
 We prove that the infimum problem in the right-hand side of \eqref{eq:def-tgv-B} has a unique solution.
 \begin{proposition} \label{prop:min-exist}
 Let $u\in BV(Q)$. Then the infimum in \eqref{eq:def-tgv-B_k} is attained by a unique function $ v=(v_0,\ldots,v_{k-1})$, with $v_l\in BV_{\B_l}(Q;\mathbb M^{N^{l+1}})$ for $l=0,\ldots, k-1$.
 \end{proposition}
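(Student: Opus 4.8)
## Proof Proposal for Proposition \ref{prop:min-exist}

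The plan is to prove the statement for $k=1$ in detail and indicate that $k>1$ is analogous (exactly as in the proof of Proposition \ref{weak_equi_norm}). Thus, with $u\in BV(Q)$ fixed, the task is to show that the convex functional
\[
F(v):=\alpha_0\,\abs{Du-v}_{\mb(Q;\R^N)}+\alpha_1\,\abs{\B v}_{\mb(Q;\mnn)},\qquad v\in BV_\B(Q;\R^N),
\]
attains its infimum at exactly one point; here $\abs{Du-v}_{\mb}$ abbreviates $\abs{Du-v\,\mathcal L^N}_{\mb}$.

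\textbf{Existence.} I would argue by the direct method. By Proposition \ref{weak_equi_norm} (or simply by testing with $v=0$) one has $\inf F\le\alpha_0\abs{Du}_{\mb}<+\infty$, so we may fix a minimizing sequence $\seqn{v_n}$ with $F(v_n)\le\inf F+1$ for $n$ large. From $\alpha_1\abs{\B v_n}_{\mb}\le F(v_n)$ and
\[
\norm{v_n}_{L^1(Q;\R^N)}=\abs{v_n\,\mathcal L^N}_{\mb}\le\abs{Du-v_n\,\mathcal L^N}_{\mb}+\abs{Du}_{\mb}\le\alpha_0^{-1}F(v_n)+\abs{Du}_{\mb},
\]
the sequence $\seqn{v_n}$ is bounded in $BV_\B(Q;\R^N)$. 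By the compactness in Assumption \ref{assum_basic_B}(3), up to a subsequence $v_n\to v$ strongly in $L^1(Q;\R^N)$; then $\B v_n\to\B v$ in $\mathcal D'(Q;\mnn)$, while the uniform bound on $\abs{\B v_n}_{\mb}$ gives, along a further subsequence, $\B v_n\wtos\sigma$ weakly$^\ast$ in $\mb(Q;\mnn)$ (weak$^\ast$ compactness of bounded sets), so that $\sigma=\B v$ and $v\in BV_\B(Q;\R^N)$. Since $v_n\to v$ in $L^1$ also yields $v_n\,\mathcal L^N\wtos v\,\mathcal L^N$, we have $Du-v_n\,\mathcal L^N\wtos Du-v\,\mathcal L^N$ and $\B v_n\wtos\B v$ weakly$^\ast$ in the respective spaces of measures, and weak$^\ast$ lower semicontinuity of the total variation gives $F(v)\le\liminfn F(v_n)=\inf F$. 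Hence $v$ is a minimizer. For general $k$ the same scheme applies to the functional in \eqref{eq:def-tgv-B_k}, chaining the a priori bounds through the auxiliary variables $v_0,\dots,v_{k-1}$ and using the compactness $BV_{\B^l}(Q;\mathbb M^{N^{l+1}})\hookrightarrow L^1$ from Assumption \ref{assum_basic_B}(3) for each $l$.

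\textbf{Uniqueness.} The functional $F$ is convex, and more generally the functional in \eqref{eq:def-tgv-B_k} is convex in $(v_0,\dots,v_{k-1})$, being a finite sum of total variations of affine expressions of these variables; so it suffices to argue strict convexity along segments joining two putative minimizers. If $v$ and $v'$ were two minimizers, then by convexity the whole segment $v_t:=(1-t)v+tv'$, $t\in[0,1]$, would consist of minimizers; consequently the two convex functions $t\mapsto\alpha_0\abs{Du-v_t}_{\mb}$ and $t\mapsto\alpha_1\abs{\B v_t}_{\mb}$, having constant sum on $[0,1]$, would both be affine there. This forces equality in the triangle inequality for the pairs of vector measures $\big(Du-v\,\mathcal L^N,\ Du-v'\,\mathcal L^N\big)$ and $\big(\B v,\ \B v'\big)$; passing to polar decompositions with respect to a common dominating measure, one then reads off pointwise collinearity relations, namely that $v-v'$ is ($\mathcal L^N$-a.e.\ where nonzero) collinear with the density of $Du-v\,\mathcal L^N$, and that $\B(v-v')$ is parallel, as a measure, to $\B v$. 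The decisive step — and the one I expect to be the real obstacle — is then to combine these rigidity relations with the structure of the admissible operators in $\Pi$ (the finite dimensionality of $\mathcal N(\B)$, together with the ellipticity/coercivity imposed in Section \ref{fsaqs_sec}) to conclude that $v-v'\equiv0$: since the total variation is not strictly convex, uniqueness cannot come from convexity alone, and the argument must genuinely use that for an admissible $\B$ the two penalized quantities $Du-v$ and $\B v$ cannot be simultaneously degenerate. This is where I would concentrate the effort.
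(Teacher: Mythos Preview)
Your existence argument for $k=1$ is essentially the paper's: direct method, the $L^1$-bound on $v_n$ from $\abs{Du-v_n}_{\mb}\le C$ and $\abs{Du}_{\mb}<\infty$, compactness from Assumption~\ref{assum_basic_B}(3), and weak$^\ast$ lower semicontinuity of the total variation. No issue there.

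The gap is in the step you brush aside for $k\ge 2$. The a~priori bounds do \emph{not} ``chain'' in the obvious way. For $k=2$, the minimizing inequality gives you control of $\abs{Du-v_0^n}_{\mb}$, $\abs{v_1^n-\B_0 v_0^n}_{\mb}$, and $\abs{\B_1 v_1^n}_{\mb}$; from the first you bound $\norm{v_0^n}_{L^1}$, but to bound $\norm{v_1^n}_{L^1}$ you would need a bound on $\abs{\B_0 v_0^n}_{\mb}$, and no such bound is available directly. The paper closes this by a normalization--contradiction argument: assuming $\norm{v_1^n}_{L^1}\to\infty$, one divides through, extracts $L^1$ limits $\tilde v_0,\tilde v_1$ via Assumption~\ref{assum_basic_B}, and from the rescaled functional tending to zero deduces $\tilde v_0=0$ and then $\tilde v_1=0$, contradicting $\norm{\tilde v_1}_{L^1}=1$. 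Only after this does one get $\sup_n\norm{v_1^n}_{BV_{\B_1}}<\infty$ and can run the direct method. Your proposal omits precisely this step, which is the nontrivial content of the higher-order case.

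On uniqueness you are, interestingly, more scrupulous than the paper, which simply invokes ``strict convexity of the $PGV_{\B}$-seminorm'' without further comment. You are right that the total variation is not strictly convex, so the one-line justification is at best incomplete; your outline via equality in the triangle inequality and polar decompositions is the natural route, but as you yourself note it is not finished, and the final rigidity step (forcing $v-v'=0$ from the collinearity relations together with the structure of $\B\in\Pi$) is not supplied. So on uniqueness neither your proposal nor the paper's proof is a complete argument; the paper differs from you only in that it asserts the conclusion rather than flagging the difficulty.
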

 \begin{proof}
 We start with the case $k=1$. Let $u\in BV(Q)$ and, without loss of generality, assume that $\alpha=(1,1)$. In view of Proposition \ref{weak_equi_norm} we have $\PGVt(u)<+\infty$.\\\\
The existence of a unique minimizer $v\in L^1(Q;\R^N)$ with $\B v\in \mb(Q;\mnn)$ follows from the Direct Method of the calculus of variations. Indeed, let $\seqn{v_n}\subset BV_{\B}(Q;\rn)$ be such that 
 \be
 \abs{Du-v_n}_{\mb(Q;\R^N)}+\abs{\B v_n}_{\mb(\Omega;\mnn)}\leq \PGVt(u)+1/n
 \ee
 for every $n\in \N$. Then,
\be\label{eq:bd-pn-l1}
 \norm{v_n}_{L^1(Q;\R^N)}\leq \abs{Du-v_n}_{\mb(Q;\R^N)}+\abs{Du}_{\mb(Q;\R^N)}\leq \PGVt(u)+\abs{Du}_{\mb(Q;\R^N)}+1/n,
\ee
and
\be\label{eq:bd-A-pn}
\abs{\B v_n}_{\mb(Q;\mnn)}\leq \PGVt(u)+1/n,
\ee
for every $n\in \N$. In view of Assumption \ref{assum_basic_B}, and together with \eqref{eq:bd-pn-l1} and \eqref{eq:bd-A-pn}, we obtain a function $v\in L^1(Q;\R^N)$ with $\B v\in \mb(Q;\mnn)$ such that, up to the extraction of a  subsequence (not relabeled), there holds
\be
v_n\to v\quad\text{strongly in }L^1(Q;\R^N),
\ee
and
\be
\liminfn\abs{\B v_n}_{\mathcal M_b(Q;\mnn)}\geq\abs{\B v}_{\mathcal M_b(Q;\mnn)}.
\ee
The minimality of $v$ follows by lower-semicontinuity, whereas the uniqueness is a consequence of the strict convexity of the $PGV_{\B}$-seminorm. \\\\
For $k=2$, we write $\B[2]=(\B_0,\B_1)$. Again without loss of generality, we assume that $(\alpha_0,\alpha_1,\alpha_2)=(1,1,1)$. Let $\seqn{v_0^n}\subset BV_{\B_0}(Q;\rn)$ and $\seqn{v_1^n}\subset BV_{\B_1}(Q;\mnn)$ be such that 
\begin{align}\label{v1_upper_bdd}
 \abs{Du-v_0^n}_{\mb(Q;\R^N)}+\abs{v^n_1-\B_0 v_0^n}_{\mb(Q;\mnn)}+\abs{\B_1 v_1^n}_{\mb(Q;\M^{N^3})}
 \leq PGV_{\B[2]}^{3}(u)+1/n.
\end{align}
We claim that 
\be
\label{eq:claim-num}
\sup\flp{\norm{v_1^n}_{L^1(Q;\mnn)}:\,\,n\in\N}<+\infty.
\ee
Suppose that the claim is false, i.e., that, up to the extraction of a subsequence (not relabeled), $\norm{v_1^n}_{L^1(Q;\mnn)}\to \infty$. By \eqref{v1_upper_bdd} we have that
\begin{align}
 &\frac{1}{\norm{v_1^n}_{L^1(Q;\mnn)}}\fmp{\abs{Du-v_0^n}_{\mb(Q;\R^N)}
 +\abs{v^n_1-\B_0 v_0^n}_{\mb(Q;\mnn)}+\abs{\B_1 v_1^n}_{\mb(Q;\M^{N^3})}}\\
 &\leq \frac1{\norm{v_1^n}_{L^1(Q;\mnn)}}\fsp{PGV_{\B[2]}^{3}(u)+1/n}\to 0.\label{eq:divided}
\end{align}
Defining
\be
\label{eq:def-auxiliary}
\tilde v_0^n:=v_0^n/\norm{v_1^n}_{L^1(Q;\mnn)}\text{ and }\tilde{v}_1^n:=v_1^n/\norm{v_1^n}_{L^1(Q;\mnn)},
\ee
we obtain that $\{\tilde v_0^n\}_{n=1}^{\infty}$ is bounded in $BV_{\B_0}(Q;\R^N)$ and $\{\tilde v_1^n\}_{n=1}^{\infty}$ is bounded in $BV_{\B_1}(Q;\R^{N\times N})$. In view of Assumption \ref{assum_basic_B}, there exist $\tilde v_0\in BV_{\B_0}(Q;\rn)$ and $\tilde v_1\in BV_{\B_1}(Q;\mnn)$ such that, upon the extraction of a subsequence (not relabeled), 
\begin{align}
&\tilde v_0^n\to{\tilde v_0}\quad\text{strongly in $L^1(Q;\rn)$ and $\text{weakly}^\ast$ in }BV_{\B_0}(Q;\rn),\\
&{\tilde v_1^n}\to{\tilde v_1}\quad\text{strongly in $L^1(Q;\mnn)$ and $\text{weakly}^\ast$ in }BV_{\B_1}(Q;\mnn).
\end{align}
In particular, since $\norm{\tilde v_i^n}_{L^1(Q;\mnn)}=1$ for every $n\in \mathbb{N}$, we deduce that $\norm{\tilde v_1}_{L^1(Q;\mnn)}=1$. By \eqref{eq:divided} we conclude that
\begin{align*}
&\norm{\tilde v_0}_{L^1(Q;\rn)}+\abs{\tilde v_1-\B_0\tilde v_0}_\mbqmnn+\abs{\B_1\tilde v_1}_{\mb(Q;\M^{N^3})}\\
&\,\,\,\,\,\,\leq \liminf_{n\to +\infty}\Big\{ \frac{1}{\norm{v_1^n}_{L^1(Q;\mnn)}}\big[\abs{Du-v_0^n}_{\mb(Q;\R^N)}\\
&\,\,\,\,\,\,\,\,\,\,\,\,\,+\abs{v^n_1-\B_0 v_0^n}_{\mb(Q;\mnn)}+\abs{\B_1 v_1^n}_{\mb(Q;\M^{N^3})}\big]\Big\}=0,
\end{align*}
which yields $\tilde v_0=0$ and $\tilde v_1=0$. This contradicts the fact that $\norm{\tilde v_1}_{L^1(Q;\mnn)}=1$ and implies \eqref{eq:claim-num}. \\

In view of \eqref{v1_upper_bdd} and \eqref{eq:claim-num} there holds
\be
\sup\flp{\norm{v_0^n}_{BV_{\B_0}(Q;\rn)}+\norm{v_1^n}_{BV_{\B_1}(Q;\mnn)}:\,\,n\in\N}<+\infty.
\ee
Thus, by Assumption \ref{assum_basic_B} there exist $v_0\in BV_{\B_0}(Q;\rn)$ and $v_1\in BV_{\B_1}(Q;\mnn)$ such that, again upon extracting a  subsequence (not relabeled), 
\begin{align}
& v_0^n\to{ v_0}\quad \text{strongly in $L^1(Q;\rn)$ and $\text{weakly}^\ast$ in }BV_{\B_0}(Q;\rn),\\
& v_1^n\to{ v_1}\quad \text{strongly in $L^1(Q;\mnn)$ and $\text{weakly}^\ast$ in }BV_{\B_1}(Q;\mnn).
\end{align}
In particular, $v_1^n-\B_0 v_0^n\wtos v_1-{\B_0 v_0}$ $\text{weakly}^\ast$ in $\mbqmnn$ and hence
\be
\liminfn\abs{v_1^n-\B_0 v_0^n}_{\mbqmnn}\geq\abs{v_1-{\B_0 v_0}}_{\mbqmnn}.
\ee
The minimality of $v:=(v_0,v_1)$ follows from \eqref{v1_upper_bdd} and by lower semicontinuity. The uniqueness is again a consequence of the strict convexity of the $PGV^{3}_{\B[2]}$-seminorm. We conclude the proof by remarking that the case for $k\geq3$ can be treated in an analogous way. 
\end{proof}
We close this section by studying the asymptotic behavior of the $PGV^{k+1}_{\B[k]}$ seminorms in terms of the operator $\B$ for subclasses of $\Pi_l$ satisfying Assumption \ref{assum_basic_BPi}.
\begin{proposition}\label{thm_asymptitic_PGV}
Let $k\in \mathbb{N}$ and let $u\in BV(Q)$. Let $\seqn{\B_n[k]}\subset \Pi^k$ and $\seqn{\alpha_n}\subset \R^{k+1}_+$ be such that $\B_n[k]\to \B[k]$ in $\ell^{\infty}$ and $\alpha_n\to \alpha\in\R^{k+1}_+$. Assume that $\seqn{\B_n[k]}$ satisfies Assumption \ref{assum_basic_BPi}. Then 
\be
\limn PGV_{\alpha_n,\B_n[k]}^{k+1}(u)=PGV_{\alpha,\B[k]}^{k+1}(u).
\ee
\end{proposition}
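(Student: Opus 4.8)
The plan is to prove the two one-sided inequalities $\limsupn PGV^{k+1}_{\alpha_n,\B_n[k]}(u)\le PGV^{k+1}_{\alpha,\B[k]}(u)$ and $\liminfn PGV^{k+1}_{\alpha_n,\B_n[k]}(u)\ge PGV^{k+1}_{\alpha,\B[k]}(u)$. Before starting I would record the elementary facts that, since $\alpha_n\to\alpha\in\R^{k+1}_+$, all components of $\alpha_n$ are, for $n$ large, bounded above and away from zero, and that $\sup_n\|\B_n[k]\|_{\ell^\infty}<+\infty$; I would also note that $PGV^{k+1}_{\alpha,\B[k]}(u)$ is well defined (so in particular $\B[k]\in\Pi^k$ and Propositions \ref{weak_equi_norm} and \ref{prop:min-exist} apply to it). As in Proposition \ref{prop:min-exist}, I would carry the argument out in full for $k=1$ and only indicate the heavier but analogous bookkeeping for $k\ge2$.

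For the lower bound, for each $n$ I would fix a near-minimizer $v^n=(v^n_0,\dots,v^n_{k-1})$, with $v^n_l\in BV_{\B^l_n}(Q;\M^{N^{l+1}})$, whose value in \eqref{eq:def-tgv-B_k} exceeds $PGV^{k+1}_{\alpha_n,\B_n[k]}(u)$ by at most $1/n$; since the trivial choice $v_l\equiv0$ gives $PGV^{k+1}_{\alpha_n,\B_n[k]}(u)\le C\,|Du|_{\mb(Q;\R^N)}$, uniformly in $n$, these values are uniformly bounded. Exactly as in the proof of Proposition \ref{prop:min-exist} — for $k\ge2$ via the rescaling-and-contradiction argument, now applying Assumption \ref{assum_basic_BPi} to each rescaled component sequence — I would deduce $\sup_n\big(\|\B_n[k]\|_{\ell^\infty}+\sum_{l=0}^{k-1}\|v^n_l\|_{BV_{\B^l_n}(Q;\M^{N^{l+1}})}\big)<+\infty$. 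Assumption \ref{assum_basic_BPi} then yields, along a common subsequence, functions $v_l\in BV_{\B^l}(Q;\M^{N^{l+1}})$ — the limiting operator being forced to be $\B^l$ by $\ell^\infty$-convergence of the coefficients — with $v^n_l\to v_l$ strongly in $L^1$ and $\B^l_n v^n_l\wtos\B^l v_l$ weakly-$*$ in $\mb$. Then $Du-v^n_0\wtos Du-v_0$ and $v^n_l-\B^{l-1}_n v^n_{l-1}\wtos v_l-\B^{l-1}v_{l-1}$ weakly-$*$, so lower semicontinuity of the total variation together with $\alpha_n\to\alpha$ gives $\liminfn PGV^{k+1}_{\alpha_n,\B_n[k]}(u)\ge\alpha_0|Du-v_0|_{\mb}+\sum_{l=1}^{k-1}\alpha_l|v_l-\B^{l-1}v_{l-1}|_{\mb}+\alpha_k|\B^{k-1}v_{k-1}|_{\mb}\ge PGV^{k+1}_{\alpha,\B[k]}(u)$, the last step because $(v_0,\dots,v_{k-1})$ is admissible in the definition of $PGV^{k+1}_{\alpha,\B[k]}(u)$.

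For the upper bound I would take the minimizer $v=(v_0,\dots,v_{k-1})$ of $PGV^{k+1}_{\alpha,\B[k]}(u)$ from Proposition \ref{prop:min-exist} and regularize it componentwise by a dilation $x\mapsto tx$ (with $t\uparrow1$, to create room near $\partial Q$) followed by mollification at scale $\delta\downarrow0$, obtaining $v^{t,\delta}_l\in C^\infty(\bar Q;\M^{N^{l+1}})$ with $v^{t,\delta}_l\to v_l$ in $L^1$. The decisive point is that both operations are linear convolutions, with which the constant-coefficient operators $\B^l$ commute up to explicit Jacobian factors tending to $1$; hence, writing $m_0:=Du-v_0$, $m_l:=v_l-\B^{l-1}v_{l-1}$ for $1\le l\le k-1$, and $m_k:=\B^{k-1}v_{k-1}$, Young's inequality yields $\limsup_{t\uparrow1,\,\delta\downarrow0}|Du-v^{t,\delta}_0|_{\mb}\le|m_0|_{\mb}$, $\limsup_{t\uparrow1,\,\delta\downarrow0}|v^{t,\delta}_l-\B^{l-1}v^{t,\delta}_{l-1}|_{\mb}\le|m_l|_{\mb}$, and $\limsup_{t\uparrow1,\,\delta\downarrow0}|\B^{k-1}v^{t,\delta}_{k-1}|_{\mb}\le|m_k|_{\mb}$. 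Being smooth, each $v^{t,\delta}_l$ lies in $BV_{\B^l_n}(Q;\M^{N^{l+1}})$ for every $n$ and is thus admissible in $PGV^{k+1}_{\alpha_n,\B_n[k]}(u)$; plugging it in and letting $n\to\infty$ first — using that for a fixed smooth $w$ one has $\B^l_n w\to\B^l w$ uniformly on $\bar Q$ because the coefficients converge, together with $\alpha_n\to\alpha$ — I obtain, for fixed $(t,\delta)$, that $\limsupn PGV^{k+1}_{\alpha_n,\B_n[k]}(u)\le\alpha_0|Du-v^{t,\delta}_0|_{\mb}+\sum_{l=1}^{k-1}\alpha_l|v^{t,\delta}_l-\B^{l-1}v^{t,\delta}_{l-1}|_{\mb}+\alpha_k|\B^{k-1}v^{t,\delta}_{k-1}|_{\mb}$. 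Sending $t\uparrow1$ and $\delta\downarrow0$ and invoking the three estimates above gives $\limsupn PGV^{k+1}_{\alpha_n,\B_n[k]}(u)\le\sum_{l=0}^k\alpha_l|m_l|_{\mb}=PGV^{k+1}_{\alpha,\B[k]}(u)$, which completes the proof.

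I expect the upper bound to be the delicate step. The difficulty is structural: a function in $BV_{\B^l}$ need not belong to $BV_{\B^l_n}$, since these spaces may jump under $\ell^\infty$-perturbations of the operator (compare $BV=BV_{\nabla}$ with $BD=BV_{\operatorname{sym}\nabla}$), so the optimal tuple for $\B[k]$ cannot be used verbatim as a competitor for $\B_n[k]$; it must be replaced by competitors lying in a class of admissible functions common to all $\B_n$, namely $C^\infty(\bar Q)$, while the \emph{coupled} residual masses $|v_l-\B^{l-1}v_{l-1}|$ are kept under control. This is exactly why the regularization must be a convolution (mollification, preceded by the harmless dilation that handles $\partial Q$): a convolution commutes with the constant-coefficient operators $\B^{l-1}$ and does not increase the total variation of the residual measures, a property that a generic application of the strict density in Assumption \ref{assum_basic_B}(2) would not provide. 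The lower bound, by contrast, is a routine consequence of the compactness Assumption \ref{assum_basic_BPi} and the rescaling trick already used in Proposition \ref{prop:min-exist}.
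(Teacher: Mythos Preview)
Your proof is correct and shares the paper's overall two-step structure: a liminf inequality via compactness (Assumption~\ref{assum_basic_BPi}) and lower semicontinuity, and a limsup inequality by smoothing the optimal tuple and using it as a competitor for all $\B_n[k]$. The liminf argument is essentially identical to the paper's. The limsup differs in how the smoothing is produced: the paper directly invokes the abstract strict density of Assumption~\ref{assum_basic_B}(2), replacing the minimizer $v$ by a nearby $C^\infty$ function and then passing $\B_n v\to \B v$; you instead construct the smooth approximants explicitly by dilation-plus-mollification. For $k=1$ the paper's route is shorter, since there is no coupled term and strict density of a single $v$ immediately controls both $|Du-v|$ and $|\B v|$. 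For $k\ge 2$ the paper simply asserts the limsup is ``similar to the case $k=1$'', but your worry is legitimate: two \emph{independent} strict approximations of $v_{l-1}$ and $v_l$ need not keep the mixed mass $|v_l-\B^{l-1}v_{l-1}|$ under control. Your explicit convolution argument---using that constant-coefficient operators commute with mollification, so that the coupled residuals are themselves mollified and hence do not gain mass---supplies exactly the missing justification, at the cost of a slightly heavier computation (the Jacobian factors from the dilation). In short, the two approaches coincide for $k=1$, while for $k\ge 2$ yours is the more carefully argued one.
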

\begin{proof}
\noindent {Step 1: The case $k=1$}.
We present our argument in the case in which $\alpha_n:=(1,1)$ for all $n\in\N$. The general case can be argued in the same way since, by assumption, $\alpha\in \R^{2}_+$. 

We first claim that 
\be\label{liminf_u_const}
\liminfn \PGVon(u)\geq PGV^2_{\B}(u).
\ee
Indeed, by Proposition \ref{prop:min-exist} for each $n\in\N$ there exists $v_n\in BV_{\B_n}(Q;\rn)$ such that 
\be
\PGVon(u)=\abs{Du-v_n}_{\mb(Q;\R^N)}+\abs{\B_n v_n}_{\mb(Q;\mnn)}.
\ee
From \eqref{eq:ineq-bv} we see that 
\be
\abs{Du-v_n}_{\mb(Q;\R^N)}+\abs{\B_n v_n}_{\mb(Q;\mnn)}\leq \abs{Du}<+\infty,
\ee
which implies that $\seqn{v_n,\B_n}\subset L^1(Q;\rn)\times\Pi$ is bounded. Therefore, by Assumption \ref{assum_basic_BPi} there exist $\B\in\Pi$ and $v\in BV_{\B}(Q)$ such that $v_n\to  v$ strongly in $L^1(Q;\rn)$ and 
\be\label{eq_M_left_go}
\liminfn \abs{\B_n v_n}_{\mathcal M_b(Q;\mnn)}  \geq \abs{\B v}_{\mathcal M_b(Q;\mnn)}.
\ee
Thus, by \eqref{eq_M_left_go} we have
\begin{align}
\liminfn\, \PGVon(u)
&=\liminfn \fmp{\abs{Du-v_n}_{\mb(Q;\R^N)}+\abs{\B_n v_n}_{\mb(Q;\mnn)}}\\
&\geq \liminfn \abs{Du-v_n}_{\mb(Q;\R^N)}+\liminfn\abs{\B_n v_n}_{\mb(Q;\mnn)}\\
&\geq  \abs{Du-v}_{\mb(Q;\R^N)}+\abs{\B v}_{\mb(Q;\mnn)}\\
&\geq PGV_{\B}^2(u),
\end{align}
where in the last inequality we used \eqref{eq:def-tgv-B}. This concludes the proof of \eqref{liminf_u_const}.\\\\
We now claim that 
\be\label{limsup_u_const}
\limsupn \,\PGVon(u)\leq PGV^2_{\B}(u).
\ee
By Proposition \ref{prop:min-exist} there exists $v\in BV_{\B}(Q;\rn)$ such that 
\be
\PGVt(u)=\abs{Du-v}_{\mb(Q;\R^N)}+\abs{\B v}_{\mb(Q;\mnn)}.
\ee
In view of the density result in Assumption \ref{assum_basic_B}, Statement 2, we may assume that $v\in C^\infty(Q;\rn)$ and, for $\e>0$ small,
\be\label{smooth_0_approxi}
\PGVt(u)\geq\abs{Du-v}_{\mb(\Omega;\R^N)}+\abs{\B v}_{\mb(\Omega;\mnn)} -\e.
\ee
Since
\be
\PGVon(u)\leq \abs{Du-v}_{\mb(Q;\R^N)}+\abs{\B_n v}_{\mb(Q;\mnn)},
\ee
we obtain
\begin{align}
\limsupn\,\PGVon(u)&\leq \abs{Du-v}_{\mb(Q;\R^N)}+\limsupn \abs{\B_n v}_{\mb(Q;\mnn)}\\
&\leq \abs{Du-v}_{\mb(Q;\R^N)}+ \abs{\B v}_{\mb(Q;\mnn)}\\
&\leq \PGVt(u)+\e,
\end{align}
where in the last inequality we used \eqref{smooth_0_approxi}. Claim \eqref{limsup_u_const} is now asserted by the arbitrariness of $\e>0$.\\
\noindent{Step 2: The case $k\geq 2$}.
We will write the argument for $k=2$, the situation in which $k>2$ can be treated analogously. As in the setting $k=1$, we can assume that $\alpha_n:=(1,1,1)=:\alpha$ for every $n\in \mathbb{N}$. The proof of the inequality 
\be
\limsupn \,PGV_{\B_n[2]}^3(u)\leq PGV_{\B[2]}^3(u)
\ee
is similar to that in the case $k=1$. Therefore, we only need to show that
\be
\label{eq:lsc-k}
\liminfn PGV_{\B_n[2]}^3(u)\geq PGV_{\B[2]}^3(u).
\ee
If the left-hand side of \eqref{eq:lsc-k} is unbounded, then there is nothing to prove. Therefore, without loss of generality we can assume that there exists a constant $C>0$ such that
\be
\label{eq:bdd-k}
\liminfn\, PGV_{\B_n[2]}^3(u)\leq C.
\ee
Writing $\B_n[2]=(\B_n^0,\B_n^1)$, in view of Proposition \ref{prop:min-exist} there exist $v^n_0\in BV_{\B_n^0}(Q;\R^N)$ and $v^n_1\in BV_{\B_n^1}(Q;\mnn)$ such that
\be
\label{eq:pgvn-k}
PGV_{\B_n[2]}^3(u)=\abs{Du-v_n^0}_{\mb(Q;\R^N)}+\abs{\B_n^0 v_n^0-v_n^1}_{\mb(Q;\mnn)}+\abs{\B_n^1v_n^1}_{\mb(Q;\M^{N^3})}
\ee
for every $n\in \mathbb{N}$. We claim that
\be
\label{eq:unif-bdd-k}
\sup\flp{\norm{v_n^1}_{L^1(Q;\mnn)}:\,n\in \mathbb{N}}<+\infty.
\ee
Indeed, assume by contradiction that \eqref{eq:unif-bdd-k} is false. Then, upon extracting a  subsequence (not relabeled), there holds
\be
\limn\norm{v_n^1}_{L^1(Q;\mnn)}=+\infty.
\ee
Defining the two auxiliary sequences $\flp{\tilde v_n^0}$ and $\flp{\tilde v_n^1}$ as in \eqref{eq:def-auxiliary}, by \eqref{eq:bdd-k} and \eqref{eq:pgvn-k} we obtain
\be\label{eq:everything-zero}
\limn \abs{\frac{Du}{\norm{v_n^1}_{L^1(Q;\mnn)}}-\tilde v_n^0}_{\mb(Q;\R^N)}+\abs{\B_n^0 \tilde v_n^0-\tilde v_n^1}_{\mb(Q;\mnn)}+\abs{\B_n^1\tilde v_n^1}_{\mb(Q;\M^{N^3})}=0.
\ee
From this and the fact that $\flp{\B_n[2]}$ is bounded in $\ell^{\infty}$, by Assumption \ref{assum_basic_BPi} we conclude the existence of maps $\tilde v_0\in BV_{\B_0}(Q;\R^N)$ and $\tilde v_1\in BV_{\B_1}(Q;\mnn)$ such that, upon the extraction of a further  subsequence (not relabeled), there holds
\begin{align}
&\nonumber \tilde v_0^n\to{\tilde v_0} \text{ strongly in $L^1(Q;\rn)$ and $\B_n^0 \tilde v_0^n\to \B_0{\tilde v_0}\text{ weakly}^\ast$ in }\mb(Q;\rn),\\
&\label{eq:str-v1n}{\tilde v_1^n}\to{\tilde v_1}\text{ strongly in $L^1(Q;\mnn)$ and $\B_n^1 \tilde v_1^n\to \B_1{\tilde v_1}\text{ weakly}^\ast$ in }\mb(Q;\mnn).
\end{align}
By \eqref{eq:everything-zero} we infer that
\be
\limn \abs{\tilde v_0}_{\mb(Q;\R^N)}+\abs{\B_0 \tilde v_0-\tilde v_1}_{\mb(Q;\mnn)}+\abs{\B_1\tilde v_1}_{\mb(Q;\M^{N^3})}=0,
\ee
which, in turn, yields $\tilde v_0=0$ and $\tilde v_1=0$. This contradicts \eqref{eq:str-v1n}, and completes the proof of claim \eqref{eq:unif-bdd-k}.\\

By \eqref{eq:unif-bdd-k} and by Assumption \ref{assum_basic_BPi} we obtain the existence of maps $ v_0\in BV_{\B_0}(Q;\R^N)$ and $ v_1\in BV_{\B_1}(Q;\mnn)$ such that, upon the extraction of a further  subsequence (not relabeled), there holds
\begin{align}
&\nonumber v_0^n\to{ v_0}\quad \text{ strongly in $L^1(Q;\rn)$ and $\B_0  v_0^n\rightharpoonup^* \B_0{ v_0}\quad\text{ weakly}^\ast$ in }\mb(Q;\rn)\\
&{ v_1^n}\to{ v_1}\quad \text{ strongly in $L^1(Q;\mnn)$ and $\B_n^1  v_1^n\rightharpoonup^* \B_1{ v_1}\quad\text{ weakly}^\ast$ in }\mb(Q;\mnn).
\end{align}
Hence, by \eqref{eq:pgvn-k} and by lower-semicontinuity we conclude that
\begin{align*}
\liminfn\, PGV_{\B_n[2]}^3(u)&\geq \abs{Du-v_0}_{\mb(Q;\R^N)}+\abs{\B_0 v_0-v_1}_{\mb(Q;\mnn)}+\abs{\B_1v_1}_{\mb(Q;\M^{N^3})}\\
&\geq PGV_{\B[2]}^3(u),
\end{align*}
which concludes the proof of \eqref{eq:lsc-k} and of the proposition.
\end{proof}

\section{$\Gamma$-convergence of functionals defined by $PGV$- total generalized variation seminorms}\label{gamma_conv_sec}
In this section we prove a $\Gamma$-convergence result with respect to the operator $\B$. For $r>0$ we denote (see \eqref{BB_distance})
\be\label{small_around_B}
(\B)_r:=\flp{\B'\in \Pi:\,\,\norm{\B'-\B}_{\ell^\infty}\leq r}.
\ee
We recall the notation $\Pi^k=\Pi\times\Pi_2\times\cdots\times\Pi_k$ from Definition \ref{def_class_Pi}. Throughout this section let $u_{\eta}\in L^2(Q)$ be a given datum representing a corrupted image.

\begin{define}
\label{def:fractional TGV functional}
Let $k\in\N$, $\B[k]\in \Pi^{k}$, $\alpha\in \R^{k+1}_+$. We define the functional $\mathcal I^{k+1}_{\alpha,\B[k]}$ :$L^1(Q)\to [0,+\infty]$ as
\be
\mathcal I^{k+1}_{\alpha,\B[k]}(u):=
\begin{cases}
\norm{u-u_\eta}_{L^2(Q)}^2+ \PGVk(u)&\text{ if }u\in BV(Q),\\
+\infty &\text{ otherwise. }
\end{cases}
\ee
\end{define}

The following theorem is the main result of this section.
\begin{theorem}
\label{thm:new-Gamma}
Let $\seqn{\B_n[k]}\subset \Pi^{k}$ satisfy Assumption \ref{assum_basic_BPi}, and let $\seqn{\alpha_n}\subset \R^{k+1}_+$ be such that $\B_n[k]\to \B[k]$ in $\ell^{\infty}$ and $\alpha_n\to \alpha\in\R^{k+1}_+$. Then the functionals $\mathcal I^{k+1}_{\alpha_n,\B_n[k]}$ satisfy the following compactness properties:\\

{\rm (Compactness)} Let $u_n\in BV(Q)$, $n\in\N$, be such that 
\be
\sup\flp{\mathcal I^{k+1}_{\alpha_n,\B_n[k]}(u_n):\,\,n\in\N}<+\infty.
\ee
Then there exists $u\in BV(Q)$ such that, up to the extraction of a  subsequence (not relabeled),
\be
u_n\wtos  u\text{ weakly}^\ast\text{ in }BV(Q).
\ee
Additionally, $\mathcal I^{k+1}_{\alpha_n,\B_n[k]}$ $\Gamma$-converges to $\mathcal I^{k+1}_{\alpha,\B[k]}$ in the $L^1$ topology. To be precise, for every $u\in BV(Q)$ the following two conditions hold:\\

{\rm (Liminf inequality)} If 
\be u_n\to u\text{ in }L^1(Q)\ee
then 
\be
\mathcal I_{\alpha,\B[k]}^{k+1}(u)\leq \liminf_{n\to +\infty}\mathcal I_{\alpha_n,\B_n[k]}^{k+1}(u_n).\ee
{\rm (Recovery sequence)} For each $u\in BV(Q)$, there exists $\seqn{u_n}\subset BV(Q)$ such that 
\be u_n\to u\text{ in }L^1(Q)\ee
and 
\be
\limsup_{n\to +\infty}\mathcal I_{\alpha_n,\B_n[k]}^{k+1}(u_n)\leq \mathcal I_{\alpha,\B[k]}^{k+1}(u).
\ee
\end{theorem}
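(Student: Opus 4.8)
The plan is to establish the three claims---compactness, the liminf inequality, and the existence of a recovery sequence---in that order, relying on Proposition~\ref{thm_asymptitic_PGV} as the key technical engine.

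\emph{Compactness.} Suppose $\sup_n \mathcal I^{k+1}_{\alpha_n,\B_n[k]}(u_n)<+\infty$. The $L^2$-fidelity term controls $\norm{u_n}_{L^2(Q)}$, hence $\norm{u_n}_{L^1(Q)}$, uniformly. Since $\alpha_n\to\alpha\in\R^{k+1}_+$, for $n$ large each component $\alpha_{n,j}$ is bounded away from $0$, so the $PGV$-term in the energy controls $PGV^{k+1}_{\B_n[k]}(u_n)$ uniformly, and by the inequality $PGV^{k+1}_{\B_n[k]}(u_n)\le \abs{Du_n}_{\mb(Q;\R^N)}$ being reversed in spirit---more precisely, by the argument in Proposition~\ref{weak_equi_norm}, which bounds $\abs{Du_n}$ in terms of $PGV^{k+1}_{\B_n[k]}(u_n)$ plus the $L^1$-norm of the optimal $v_0^n$---one gets a uniform bound on $\abs{Du_n}_{\mb(Q;\R^N)}$. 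Here one must check that the auxiliary function $\bar v_0^n$ arising in Proposition~\ref{weak_equi_norm} has uniformly bounded $L^1$-norm; this follows exactly as in the proof of Proposition~\ref{prop:min-exist} (the contradiction/rescaling argument showing $\sup_n\norm{v_\ell^n}_{L^1}<+\infty$), using Assumption~\ref{assum_basic_BPi} to rule out blow-up. Thus $\seqn{u_n}$ is bounded in $BV(Q)$, and weak-$*$ compactness of $BV(Q)$ yields a subsequence with $u_n\wtos u$ in $BV(Q)$ for some $u\in BV(Q)$.

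\emph{Liminf inequality.} Let $u_n\to u$ in $L^1(Q)$. If $\liminf_n \mathcal I^{k+1}_{\alpha_n,\B_n[k]}(u_n)=+\infty$ there is nothing to prove, so pass to a subsequence realizing the liminf as a finite limit; by the compactness step $u\in BV(Q)$ and, along a further subsequence, $u_n\wtos u$ in $BV(Q)$. The fidelity term is continuous under $L^2$-convergence, and by lower semicontinuity of the $L^2$-norm under $L^1$-convergence combined with the uniform $L^2$-bound, $\norm{u-u_\eta}_{L^2(Q)}^2\le\liminf_n\norm{u_n-u_\eta}_{L^2(Q)}^2$. For the regularizer, the honest route is to mimic the lower-semicontinuity argument inside the proof of Proposition~\ref{thm_asymptitic_PGV}: take near-optimal competitors $v^n=(v_0^n,\dots,v_{k-1}^n)$ for $PGV^{k+1}_{\alpha_n,\B_n[k]}(u_n)$, show (again via the rescaling/contradiction argument under Assumption~\ref{assum_basic_BPi}) that they are bounded in the respective $BV_{\B_n^\ell}$ spaces, extract limits $v_\ell^n\to v_\ell$ in $L^1$ with $\B_n^\ell v_\ell^n\wtos \B^\ell v_\ell$ weakly-$*$, and conclude by lower semicontinuity of the total variation of measures under weak-$*$ convergence together with $Du_n\wtos Du$ and $\alpha_n\to\alpha$ that $PGV^{k+1}_{\alpha,\B[k]}(u)\le\liminf_n PGV^{k+1}_{\alpha_n,\B_n[k]}(u_n)$. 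Adding the two liminf inequalities gives the claim.

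\emph{Recovery sequence.} For $u\in BV(Q)$ simply take $u_n:=u$ for all $n$. Then $u_n\to u$ trivially in $L^1(Q)$, the fidelity term is constant, and Proposition~\ref{thm_asymptitic_PGV} gives $PGV^{k+1}_{\alpha_n,\B_n[k]}(u)\to PGV^{k+1}_{\alpha,\B[k]}(u)$, so $\limsup_n \mathcal I^{k+1}_{\alpha_n,\B_n[k]}(u_n)=\mathcal I^{k+1}_{\alpha,\B[k]}(u)$, which is in fact an equality. If $u\notin BV(Q)$ the target value is $+\infty$ and any sequence works.

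\emph{Main obstacle.} The delicate point is the liminf inequality for the regularizer: one cannot directly invoke Proposition~\ref{thm_asymptitic_PGV}, which fixes $u$, because here both $u_n$ and $\B_n[k]$ vary. The crux is therefore to show that the optimal inner variables $v_\ell^n$ associated to the varying $u_n$ stay bounded in $BV_{\B_n^\ell}$---this requires reproducing the blow-up/rescaling contradiction argument of Proposition~\ref{prop:min-exist} with $Du$ replaced by $Du_n$, using the uniform bound $\sup_n\abs{Du_n}_{\mb(Q;\R^N)}<+\infty$ from the compactness step---and then to pass to the limit using the joint weak-$*$ closure property in Assumption~\ref{assum_basic_BPi}, which is precisely the hypothesis that makes $\B_n^\ell v_\ell^n\wtos \B^\ell v_\ell$ hold with the correct limiting operator.
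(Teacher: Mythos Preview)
Your proposal is correct and, for the liminf inequality and the recovery sequence, follows essentially the paper's route: the paper proves (Proposition~\ref{compact_semi_para}) the liminf for the regularizer by extracting optimal inner variables $v_n$ and passing to the limit via Assumption~\ref{assum_basic_BPi}, and obtains the recovery sequence (Proposition~\ref{new_equal}) by taking $u_n\equiv u$ and invoking Proposition~\ref{thm_asymptitic_PGV}, exactly as you do.

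The compactness argument differs in organization. The paper first establishes a \emph{uniform} norm equivalence
\[
\norm{u}_{BV(Q)}\leq C_r\,\norm{u}_{BPGV^2_{\B'}(Q)}\quad\text{for all }u\in BV(Q)\text{ and all }\B'\in(\B)_r,
\]
and for this it develops an intermediate uniform Korn-type inequality (Proposition~\ref{poincare_B_uniformly}) involving the projections $\mathbb{P}_{\B'}$ onto the (finite-dimensional) kernels, together with a second contradiction argument controlling $|Du|$ against $|Du-\omega|+\|u\|_{L^1}$ uniformly over $\omega$ in the union of kernels $\mathcal{N}_r(\B)$. Your approach instead works directly on the given sequence: you take the optimal $v_0^n$ for $u_n$ and run the rescaling/blow-up contradiction (as in Proposition~\ref{prop:min-exist} or, more aptly, Proposition~\ref{unique_exist_lower}) with Assumption~\ref{assum_basic_BPi} replacing Assumption~\ref{assum_basic_B} to accommodate the varying $\B_n$. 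This is more economical---it bypasses the uniform Korn inequality entirely---but it yields only a bound along the specific sequence $\seqn{u_n}$, whereas the paper's route gives the stronger uniform statement that the $BV$ and $BPGV^2_{\B'}$ norms are equivalent with a constant independent of $\B'\in(\B)_r$. For the purposes of Theorem~\ref{thm:new-Gamma} your shortcut suffices; the paper's version is a reusable structural estimate.
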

We subdivide the proof of Theorem \ref{thm:new-Gamma} into two propositions.\\\\
For $\B\in\Pi$, we consider the projection operator 
\be
\mathbb P_\B:L^1(Q;\rn)\to \mathcal N(\B).
\ee
Note that this projection operator is well defined owing to the assumption that $\mathcal N(\B)$ is finite dimensional (see \cite[page 38, Definition and Example 2]{brezis2010functional} and \cite[Subsection 3.1]{bbddlgftfb2017}).

Next we have an enhanced version of Korn's inequality.
\begin{proposition}\label{poincare_B_uniformly}
Let $\B\in\Pi$ and let $r>0$. Then there exists a constant $C=C(\B,Q)$, depending only on $\B$ and on the domain $Q$, such that 
\be\label{eq_poincare_B_uniformly}
\norm{v-\mathbb P_{\B'}(v)}_{L^1(Q;\rn)}\leq C \abs{\B' v}_{\mathcal M_b(Q;\mnn)},
\ee
for all $v\in L^1(Q)$ and $\B'\in (\B)_r$.
\end{proposition}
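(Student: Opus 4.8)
The plan is to argue by contradiction and compactness, exploiting Assumption \ref{assum_basic_BPi}, which guarantees precisely the kind of uniform closure needed to handle the family of operators $\B'\in(\B)_r$ simultaneously. Suppose the estimate fails: then there exist sequences $\B_n\in(\B)_r$ and $v_n\in L^1(Q;\rn)$ with
\be
\norm{v_n-\mathbb P_{\B_n}(v_n)}_{L^1(Q;\rn)}=1\quad\text{and}\quad \abs{\B_n v_n}_{\mathcal M_b(Q;\mnn)}\to 0.
\ee
Set $w_n:=v_n-\mathbb P_{\B_n}(v_n)$. Since $\mathbb P_{\B_n}(v_n)\in\mathcal N(\B_n)$ and these are (first order, constant-coefficient) differential operators, $\B_n w_n=\B_n v_n$, so $\abs{\B_n w_n}_{\mathcal M_b}\to0$ while $\norm{w_n}_{L^1}=1$. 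Thus $\seqn{w_n,\B_n}$ is bounded in the sense of \eqref{eq_assum_basic_BPi}: the operators lie in the bounded set $(\B)_r\subset\Pi$, and $\norm{w_n}_{BV_{\B_n}}=\norm{w_n}_{L^1}+\abs{\B_n w_n}_{\mathcal M_b}$ is bounded.

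By Assumption \ref{assum_basic_BPi} there exist $\B'\in\Pi$ and $w\in BV_{\B'}(Q;\rn)$ with, along a subsequence, $w_n\to w$ strongly in $L^1(Q;\rn)$ and $\B_n w_n\wtos \B' w$ weakly$^*$ in $\mbqmnn$. By lower semicontinuity of the total variation with respect to weak$^*$ convergence, $\abs{\B' w}_{\mathcal M_b}\le\liminf_n\abs{\B_n w_n}_{\mathcal M_b}=0$, so $\B' w=0$, i.e. $w\in\mathcal N(\B')$. On the other hand, the strong $L^1$ convergence forces $\norm{w}_{L^1(Q;\rn)}=1$, so $w\neq0$. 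The remaining task is to derive a contradiction from the fact that $w$ lies in $\mathcal N(\B')$ yet is the strong $L^1$-limit of the $w_n$, each of which is orthogonal (in the appropriate projection sense) to $\mathcal N(\B_n)$. Here one uses that $\B_n\to\B'$ in $\ell^\infty$ — which must be verified: since $(\B)_r$ is relatively compact (the coefficients $B^i_n$ range in a bounded set of the finite-dimensional space $\prod_i\mathbb M^{N^3}$), after a further subsequence $\B_n\to\widehat\B$ in $\ell^\infty$ for some $\widehat\B$, and by uniqueness of weak$^*$ limits together with convergence of the symbols one identifies $\widehat\B=\B'$. Then the projections $\mathbb P_{\B_n}$ converge to $\mathbb P_{\B'}$ (continuity of the spectral/finite-rank projection under $\ell^\infty$-perturbation of the operator, using that $\dim\mathcal N(\B')<\infty$), whence $\mathbb P_{\B'}(w)=\lim_n\mathbb P_{\B_n}(w_n)$; but $w_n-\mathbb P_{\B_n}(w_n)=w_n$ gives $\mathbb P_{\B_n}(w_n)=0$, so $\mathbb P_{\B'}(w)=0$, contradicting $w\in\mathcal N(\B')\setminus\{0\}$.

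The main obstacle is the last step: making rigorous the convergence $\mathbb P_{\B_n}\to\mathbb P_{\B'}$ of the projections onto the (varying, but uniformly finite-dimensional) null spaces under $\ell^\infty$-convergence of the operators. This requires showing that $\dim\mathcal N(\B_n)$ is eventually constant and that bases can be chosen converging in $L^1$; this is where finite-dimensionality of $\mathcal N(\B)$ in Definition \ref{def_class_Pi} and the stability properties packaged in Assumption \ref{assum_basic_BPi} are essential. An alternative, avoiding an explicit analysis of the projections, is to note that $w\in\mathcal N(\B')$ and $\mathcal N(\B')$ is finite-dimensional, so all norms on it are equivalent; one then revisits the contradiction by testing the defining property of $\mathbb P_{\B_n}(v_n)$ against $\mathcal N(\B_n)$ and passing to the limit, using that elements of $\mathcal N(\B_n)$ converge (up to subsequence, after normalization) to elements of $\mathcal N(\B')$ — again a consequence of Assumption \ref{assum_basic_BPi}. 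Either route yields $w=0$, the desired contradiction, and hence \eqref{eq_poincare_B_uniformly}.
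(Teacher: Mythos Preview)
Your approach is essentially identical to the paper's proof: contradiction, normalization of $w_n:=v_n-\mathbb P_{\B_n}(v_n)$, compactness via Assumption~\ref{assum_basic_BPi} (after extracting $\B_n\to\tilde\B$ in $\ell^\infty$) to produce a limit $\tilde v$ with $\|\tilde v\|_{L^1}=1$ and $\tilde v\in\mathcal N(\tilde\B)$, and then a contradiction from $\mathbb P_{\tilde\B}(\tilde v)=0$. For the last step the paper simply invokes that the projection is $1$-Lipschitz and writes $\|\mathbb P_{\tilde\B}(\tilde v)\|_{L^1}=\|\mathbb P_{\B_n}(\tilde v_n)-\mathbb P_{\tilde\B}(\tilde v)\|_{L^1}\leq\|\tilde v_n-\tilde v\|_{L^1}\to0$, which is exactly the point you (rightly) flag as the step requiring care.
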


\begin{proof}
Suppose that \eqref{eq_poincare_B_uniformly} fails. Then there exist sequences $\seqn{\B_n}\subset (\B)_r$ and $\seqn{v_n}\subset L^1(Q)$ such that 
\be
\norm{v_n-\mathbb P_{\B_n}(v_n)}_{L^1(Q;\rn)}\geq n \abs{\B_n v_n}_{\mathcal M_b(Q;\mnn)}
\ee
for every $n\in \mathbb{N}$. Up to a normalization, we can assume that
\be\label{eq_poincare_B_uniformly2}
\norm{v_n-\mathbb P_{\B_n}(v_n)}_{L^1(Q;\rn)}=1\text{ and }\abs{\B_n v_n}_{\mathcal M_b(Q;\mnn)}\leq 1/n
\ee
for every $n\in \mathbb{N}$. Since $\seqn{\B_n}\subset (\B)_r$, up to a subsequence (not relabeled), we have $\B_n\to\tilde \B$ in $\ell^\infty$, for some $\tilde\B\in (\B)_r$. Next, let 
\be
\tilde v_n:= v_n-\mathbb P_{\B_n}(v_n).
\ee
Note that for each $n\in\N$
\be\label{eq_poincare_B_uniformly4}
\mathbb P_{\B_n}(\tilde v_n)=0.
\ee
Thus, by \eqref{eq_poincare_B_uniformly2} we have
\be\label{eq_poincare_B_uniformly3}
\norm{\tilde v_n}_{L^1(Q;\rn)}=1\text{ and }\abs{\B_n\tilde v_n}_{\mathcal M_b(Q;\mnn)}\leq1/n.
\ee
In view of Assumption \ref{assum_basic_BPi}, up to a further subsequence (not relabeled), there exists $\tilde v\in BV_{\tilde \B}(Q;\rn)$ such that $\tilde v_n\to \tilde v$ strongly in $L^1(Q)$ and $|\tilde\B\tilde v|_{\mathcal M_b(Q;\mnn)}=0$. Moreover, in view of \eqref{eq_poincare_B_uniformly3}, we also have $\norm{\tilde v}_{L^1(Q;\rn)}=1$.\\\\
Since the projection operator is Lipschitz with Lipschitz constant less than or equal to one, by \eqref{eq_poincare_B_uniformly4} we have
\be
\norm{\mathbb P_{\tilde \B}(\tilde v)}_{L^1(Q)}=\norm{\mathbb P_{\B_n}(\tilde v_n)-\mathbb P_{\tilde\B}(\tilde v)}_{L^1(Q)}\leq \norm{\tilde v-\tilde v_n}_{L^1(Q)} \to 0.
\ee
Thus, $\mathbb P_{\tilde\B}(\tilde v)=0$. However, $|\tilde\B\tilde v|_{\mathcal M_b(Q;\mnn)}=0$ implies that $\tilde v\in \mathcal N[\tilde\B]$ with $\tilde v=P_{\tilde\B}(\tilde v)$, and hence we must have $\tilde v=0$, contradicting the fact that $\norm{\tilde v}_{L^1(Q;\rn)}=1$.
\end{proof}
The following proposition is instrumental for establishing the liminf inequality.
\begin{proposition}\label{compact_semi_para}
Let $\seqn{\B_n[k]}\subset \Pi^k$ satisfy Assumption \ref{assum_basic_BPi}, and let $\seqn{\alpha_n}\subset \R^{k+1}_+$ be such that $\B_n[k]\to \B[k]$ in $\ell^{\infty}$ and $\alpha_n\to \alpha\in\R^{k+1}_+$. For every $n\in \N$ let $u_n\in BV(Q)$ be such that 
\be\label{liminf_unf_upper}
\sup\flp{\mathcal I^{k+1}_{\alpha_n,\B_n[k]}(u_n):\,\,n\in\N}<+\infty.
\ee
Then there exists $u\in BV(Q)$ such that, up to the extraction of a  subsequence (not relabeled),
\be\label{weak_BV_liminf}
u_n\wtos  u\text{ weakly}^\ast\text{ in }BV(Q)
\ee
and 
\be
\liminf_{n\to\infty}{PGV_{\alpha_n,\B_n[k]}^{k+1}(u_n)}\geq {PGV^{k+1}_{\alpha,\B[k]}(u)},
\ee
with
\be
\liminfn \,\mathcal I^{k+1}_{\alpha_n,\B_n[k]}(u_n)\geq \mathcal I^{k+1}_{\alpha,\B[k]}(u).
\ee
\end{proposition}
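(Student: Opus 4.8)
I would prove the three assertions in the order stated, treating $k=1$ in detail and indicating at the end the downward induction needed for $k\ge 2$.

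\emph{Step 1 (a priori bounds and compactness).} From \eqref{liminf_unf_upper} one records $\|u_n-u_\eta\|_{L^2(Q)}\le C$, hence $\|u_n\|_{L^1(Q)}\le C$, together with $PGV^2_{\alpha_n,\B_n}(u_n)\le C$; since $\alpha_n\to\alpha\in\R^2_+$ we may assume $\alpha_0^n,\alpha_1^n\ge\delta>0$ for all $n$. Let $v_n\in BV_{\B_n}(Q;\rn)$ be the minimizer furnished by Proposition \ref{prop:min-exist}, so that $|Du_n-v_n|_{\mb(Q;\R^N)}\le C$ and $|\B_n v_n|_{\mb(Q;\mnn)}\le C$. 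The crux of the proof — and the step I expect to be the main obstacle — is a uniform bound on $\|v_n\|_{L^1(Q)}$, equivalently on $|Du_n|_{\mb(Q;\R^N)}$; this is delicate precisely because $PGV^2$ is strictly weaker than $TV$ and is insensitive to the finite-dimensional kernels $\mathcal N(\B_n)$, which move with $n$. I would split $v_n=w_n+p_n$ with $p_n:=\mathbb P_{\B_n}(v_n)\in\mathcal N(\B_n)$ and $w_n:=v_n-p_n$, and use the uniform Korn--Poincar\'e inequality of Proposition \ref{poincare_B_uniformly}, centered at the limit operator $\B$ (legitimate since $\B_n\in(\B)_1$ for $n$ large), to obtain $\|w_n\|_{L^1(Q)}\le C\,|\B_n v_n|_{\mb(Q;\mnn)}\le C$, hence $\|w_n\|_{BV_{\B_n}(Q;\rn)}\le C$. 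It then remains to bound $\|p_n\|_{L^1(Q)}$, which I would do by the normalization/contradiction scheme already used in the proof of Proposition \ref{prop:min-exist}: if $\|p_n\|_{L^1(Q)}\to+\infty$ along a subsequence, then $\hat p_n:=p_n/\|p_n\|_{L^1(Q)}\in\mathcal N(\B_n)$ has $\|\hat p_n\|_{BV_{\B_n}(Q;\rn)}=1$, so Assumption \ref{assum_basic_BPi} yields $\hat p_n\to\hat p$ in $L^1(Q)$ with $\|\hat p\|_{L^1(Q)}=1$; dividing the identity $Du_n=w_n+p_n+(Du_n-v_n)$ by $\|p_n\|_{L^1(Q)}$ then gives $Du_n/\|p_n\|_{L^1(Q)}\wtos\hat p$ weakly$^*$ in $\mb(Q;\R^N)$, whereas $u_n/\|p_n\|_{L^1(Q)}\to 0$ in $L^1(Q)$ forces the distributional limit of these gradients to vanish, i.e.\ $\hat p=0$ — a contradiction. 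Hence $\|v_n\|_{L^1(Q)}\le C$ and $|Du_n|_{\mb(Q;\R^N)}\le|Du_n-v_n|_{\mb(Q;\R^N)}+\|v_n\|_{L^1(Q)}\le C$, so $\{u_n\}$ is bounded in $BV(Q)$ and, up to a subsequence, $u_n\wtos u$ weakly$^*$ in $BV(Q)$ for some $u\in BV(Q)$.

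\emph{Step 2 (liminf inequality for $PGV^{k+1}$).} Along this subsequence $\|v_n\|_{L^1(Q)}+|\B_n v_n|_{\mb(Q;\mnn)}\le C$ and $\|\B_n\|_{\ell^\infty}$ is bounded, so Assumption \ref{assum_basic_BPi} provides (up to a further subsequence) $v\in BV_{\B}(Q;\rn)$ with $v_n\to v$ in $L^1(Q)$ and $\B_n v_n\wtos\B v$ weakly$^*$ in $\mb(Q;\mnn)$; consequently $Du_n-v_n\wtos Du-v$ weakly$^*$ in $\mb(Q;\R^N)$. Invoking $\alpha_n\to\alpha$, the weak$^*$ lower semicontinuity of the total variation of measures, and the definition \eqref{eq:def-tgv-B} tested with the admissible competitor $v$, one concludes
\be
\liminf_{n\to\infty}PGV^2_{\alpha_n,\B_n}(u_n)\ \ge\ \alpha_0\,|Du-v|_{\mb(Q;\R^N)}+\alpha_1\,|\B v|_{\mb(Q;\mnn)}\ \ge\ PGV^2_{\alpha,\B}(u).
\ee

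\emph{Step 3 (liminf inequality for $\mathcal I^{k+1}$, and the case $k\ge 2$).} Since $\|u_n\|_{L^2(Q)}\le C$ and $u_n\to u$ in $L^1(Q)$, we have $u_n\wc u$ weakly in $L^2(Q)$, hence $\|u-u_\eta\|_{L^2(Q)}^2\le\liminf_{n\to\infty}\|u_n-u_\eta\|_{L^2(Q)}^2$ by weak lower semicontinuity of the $L^2$-norm; combining this with Step 2 and the superadditivity of $\liminf$ gives $\liminf_{n\to\infty}\mathcal I^2_{\alpha_n,\B_n}(u_n)\ge\mathcal I^2_{\alpha,\B}(u)$. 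For $k\ge 2$ the same programme applies to the whole chain of minimizers $v_0^n,\dots,v_{k-1}^n$ supplied by Proposition \ref{prop:min-exist}: one bounds $\|v_l^n\|_{L^1(Q)}$ by downward induction on $l$, at each stage peeling off the $\mathcal N(\B_n^l)$-component, invoking Proposition \ref{poincare_B_uniformly} for $\B_n^l$ and Assumption \ref{assum_basic_BPi}, and excluding blow-up via the $L^1$-bound on $u_n$ exactly as in the $k=2$ case of Proposition \ref{prop:min-exist}; the two lower-semicontinuity arguments then carry over term by term in \eqref{eq:def-tgv-B_k}.
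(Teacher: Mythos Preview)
Your proof is correct and follows essentially the same route as the paper: both arguments use Proposition \ref{poincare_B_uniformly} to control the component of $v_n$ transverse to $\mathcal N(\B_n)$, then rule out blow-up of the kernel component by the same normalization/contradiction scheme driven by Assumption \ref{assum_basic_BPi} and the $L^1$-bound on $u_n$, and finally invoke Assumption \ref{assum_basic_BPi} plus weak$^*$ lower semicontinuity for the liminf inequality. The only difference is organizational: the paper first packages Step~1 as a uniform norm equivalence $\|u\|_{BV(Q)}\le C_r\|u\|_{BPGV^2_{\B'}(Q)}$ valid for all $\B'\in(\B)_r$ (via an auxiliary inequality $|Du|\le C(|Du-\omega|+\|u\|_{L^1})$ for all $\omega\in\bigcup_{\B'\in(\B)_r}\mathcal N(\B')$), whereas you work directly on the sequence.
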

\begin{proof}
Without loss of generality  we assume that $k=1$ and that $\alpha_n:=(1,1)$ for every $n\in\N$, as the general case for $k>1$ and $\alpha\in \R_+^{k+1}$ can be argued with straightforward adaptations. \\\\
Fix $r>0$ and recall the definition of $(\B)_r$ from \eqref{small_around_B}. We claim that if $r$ is small enough then there exists $C_r>0$ such that 
\be\label{uinform_equivalent_norm}
\norm{u}_{BPGV^2_{\B'}(Q)}\leq\norm{u}_{BV(Q)}\leq C_r \norm{u}_{BPGV^2_{\B'}(Q)},
\ee
for all $u\in BV(Q)$ and $\B'\in (\B)_r$.\\\\
Indeed, by Definitions \ref{def-tgv-B} and \ref{def:BPGV} we always have
\be
\norm{u}_{BPGV^2_{\B'}(Q)}\leq\norm{u}_{BV(Q)},
\ee
for all $\B'\in\Pi$ and $u\in BV(Q)$. \\\\
The crucial step is to prove that the second inequality in \eqref{uinform_equivalent_norm} holds. Set
\be
\mathcal N_r(\B):=\{\om\in L^1(Q;\R^N):\,\text{ there exists }\,\B'\in (\B)_r\,\text{ for which }\,\om\in \mathcal N(\B')\}.
\ee
We claim that there exists $C>0$, depending on $r$, such that for each $u\in BV(Q)$ and $\om\in \mathcal N_r(\B)$ we have
\be\label{C_1_acquire}
\abs{Du}_{\mbqrn}\leq C\fsp{\abs{Du-\om}_{\mbqrn}+\norm{u}_{L^1(Q)}}.
\ee

Suppose that \eqref{C_1_acquire} fails. Then we find sequences $\seqn{u_n}\subset BV(Q)$ and $\seqn{\om_n}\subset \mathcal N_r(\B)$ such that 
\be
\abs{Du_n}_{\mbqrn}\geq n\fsp{\abs{Du_n-\om_n}_{\mbqrn}+\norm{u_n}_{L^1(Q)}}
\ee
for every $n\in \mathbb{N}$. Thus, up to a normalization, we can assume that
\be\label{shrink_unom3}
\abs{Du_n}_\mbqrn=1
\ee
and 
\be\label{shrink_unom}
\abs{Du_n-\om_n}_\mbqrn+\norm{u_n}_{L^1(Q)}\leq 1/n,
\ee
which implies that $u_n\to 0$ strongly in $L^1(Q)$ and 
\be\label{shrink_unom2}
\abs{Du_n-\om_n}_\mbqrn\to 0.
\ee
By \eqref{shrink_unom3} and \eqref{shrink_unom}, it follows that $\abs{\om_n}_\mbqrn$ is uniformly bounded, and hence, up to a subsequence (not relabeled), there exists $\om\in \mbqrn$ such that $\om_n\wtos \om$ in $\mbqrn$. 
For every $n\in\N$ let $\B_n'\in(\B)_r$ be such that $\om_n\in\mathcal N(\B_n')$. Then $\B_n' \om_n=0$ for all $n\in\N$. 
Since $\norm{\B_n'-\B}_{\ell^{\infty}}<r$, in particular the sequence $\seqn{\om_n,\B_n'}\subset L^1(\mbqrn)\times\Pi$ fulfills Assumption \ref{assum_basic_BPi}, and hence, upon extracting a further subsequence (not relabeled), there holds 
\be
\om_n\to\om_0\quad\text{strongly in }L^1(Q;\R^N).
\ee
 Additionally, since $u_n\to 0$ strongly in $L^1(Q)$, we infer that $Du_n\to 0$ in the sense of distributions. Therefore, by \eqref{shrink_unom2} we deduce that $\om_0=0$. Using again \eqref{shrink_unom}, we conclude that  
\be
\abs{Du_n}_{\mathcal M_b(Q;\rn)}\to 0,
\ee
which contradicts  \eqref{shrink_unom3}. This completes the proof of \eqref{C_1_acquire}.\\\\
We are now ready to prove  the second inequality in \eqref{uinform_equivalent_norm}, i.e., 
\be
\label{eq:sec-to-prove}
\norm{u}_{BV(Q)}\leq C_r \norm{u}_{BPGV^2_{\B'}(Q)}
\ee
for some constant $C_r>0$, and for all $\B'\in (\B)_r$.\\\\
Fix $\B'\in (\B)_r$, and by Proposition \ref{prop:min-exist} let $v_{\B'}$ satisfy
\be\label{C_tau_equal_B}
PGV_{\B'}^2(u)=\abs{Du-v_{\B'}}_{\mb(Q;\R^N)}+\abs{\B' v_{\B'}}_{\mb(Q;\mnn)}.
\ee

Since $\mathbb P_{\B'}[v_{\B'}]\in \mathcal N_r(\B)$, we have 
\begin{align*}
\abs{Du}_\mbqrn&\leq C(\abs{Du-\mathbb P_{\B'}[v_{\B'}]}_\mbqrn+\norm{u}_{L^1(Q)})\\
&\leq C(\abs{Du-v_{\B'}}_\mbqrn+\abs{v_{\B'}-\mathbb P_{\B'}[v_{\B'}]}_\mbqrn+\norm{u}_{L^1(Q)})\\
&\leq C(\abs{Du-v_{\B'}}_\mbqrn+C'\abs{{\B'} v_{\B'}}_{\mb(Q;\mnn)}+\norm{u}_{L^1(Q)})\\
&\leq (C+C')\fmp{\abs{Du-v_{\B'}}_\mbqrn+\abs{{\B'} v_{\B'}}_{\mb(Q;\mnn)}+\norm{u}_{L^1(Q)}}\\
&=(C+C') \fmp{PGV_{\B'}^2(u)+ \norm{u}_{L^1(Q)}},
\end{align*}
where in the first inequality we used \eqref{C_1_acquire}, the third inequality follows by \eqref{eq_poincare_B_uniformly}, and in the last equality we invoked \eqref{C_tau_equal_B}. Defining $C_r:=C+C'+1$, we obtain
\be
\norm{u}_{BV(Q)}=\norm{u}_{L^1(Q)}+\abs{Du}_{\mathcal M_b(Q;\rn)}\leq C_r (PGV_{\B'}^2(u)+ \norm{u}_{L^1(Q)})=C_r \norm{u}_{BPGV_{\B'}^2(Q)}
\ee
and we conclude \eqref{eq:sec-to-prove}.\\\\
Now we prove the compactness property. In view of \eqref{liminf_unf_upper} we have 
\be\label{uinform_equivalent_norm222}
\sup\flp{\norm{u_n}_{BPGV^2_{\B_n}(Q)}:\,\, n\in\N}<+\infty.
\ee
Since $\B_n\to\B$ in $\ell^{\infty}$, choosing $r=1$ there exists $N>0$ such that $\B_n\subset (\B)_1$ for all $n\geq N$. Thus, by  \eqref{uinform_equivalent_norm} and \eqref{uinform_equivalent_norm222}, we infer that
\be
\sup\flp{\norm{u_n}_{BV(Q)}:\,\, n\in\N}\leq C_1 \sup\flp{\norm{u_n}_{BPGV^2_{\B_n}(Q)}:\,\, n\in\N}<+\infty,
\ee
and thus we may find $ u\in BV(Q)$ such that, up to a subsequence (not relabeled), $u_n\wtos  u$ in $BV(Q)$.\\\\
Additionally, again from Proposition \ref{prop:min-exist}, for every $n\in\N$ there exists $v_n\in BV_{\B_n}(Q;\rn)$ such that,
\be
PGV_{\B_n}^2(u_n)=\abs{Du_n-v_{n}}_{\mb(Q;\R^N)}+\abs{\B_n v_n}_{\mb(Q;\mnn)}.
\ee
By \eqref{liminf_unf_upper} and  \eqref{weak_BV_liminf}, and in view of Assumption \ref{assum_basic_BPi}, we find $ v\in BV_{ \B}(Q;\rn)$ such that, up to a subsequence (not relabeled), $v_n\to  v$ strongly in $L^1$. Therefore, we have
\begin{align*}
\liminfn PGV_{\B_n}^2(u_n)&\geq\liminfn\abs{Du_n-v_{n}}_{\mb(Q;\R^N)}+\liminfn\abs{\B_n v_n}_{\mb(Q;\mnn)}\\
&\geq \abs{D  u- v}_{\mb(Q;\R^N)}+\abs{\B v}_{\mb(Q;\mnn)}\\
&\geq PGV^2_{\B}( u),
\end{align*}
where in the second to last inequality we used Assumption \ref{assum_basic_BPi}. This concludes the proof of the proposition.
\end{proof}
\begin{proposition}\label{new_equal}
Let $\seqn{\B_n[k]}\subset \Pi^k$ satisfy Assumption \ref{assum_basic_BPi}, and let $\seqn{\alpha_n}\subset \R^{k+1}_+$ be such that $\B_n[k]\to \B[k]$ in $\ell^{\infty}$ and $\alpha_n\to \alpha\in\R^{k+1}_+$. Then for every $u\in BV(Q)$ there exists $\seqn{u_n}\subset BV(Q)$ such that $u_n\to u$ in $L^1$ and
\be
\limsup_{n\to\infty}{PGV_{\alpha_n,\B_n[k]}^{k+1}(u_n)}\leq {PGV_{\alpha,\B[k]}^{k+1}(u)}.
\ee
\end{proposition}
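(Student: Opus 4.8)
The plan is to observe that Proposition \ref{new_equal} is an almost immediate consequence of the asymptotic identity already established in Proposition \ref{thm_asymptitic_PGV}. The hypotheses of the two statements coincide verbatim: $\seqn{\B_n[k]}\subset \Pi^k$ satisfies Assumption \ref{assum_basic_BPi}, $\B_n[k]\to \B[k]$ in $\ell^\infty$, and $\alpha_n\to \alpha\in \R^{k+1}_+$, with $u\in BV(Q)$ fixed; so I will simply quote that proposition rather than re-prove anything.

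First I would take the trivial (constant) recovery sequence $u_n:=u$ for every $n\in \N$. Since $u\in BV(Q)$, the sequence $\seqn{u_n}$ is admissible and $u_n\to u$ in $L^1(Q)$ is immediate. Then, applying Proposition \ref{thm_asymptitic_PGV} to this fixed $u$,
\bes
\limn PGV_{\alpha_n,\B_n[k]}^{k+1}(u_n)=\limn PGV_{\alpha_n,\B_n[k]}^{k+1}(u)=PGV_{\alpha,\B[k]}^{k+1}(u),
\ees
so in particular $\limsup_{n\to\infty} PGV_{\alpha_n,\B_n[k]}^{k+1}(u_n)\leq PGV_{\alpha,\B[k]}^{k+1}(u)$ --- in fact with equality --- which is precisely the desired conclusion.

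I do not expect any real obstacle here: the genuine content (the $\limsup$ estimate for a fixed competitor) has already been carried out inside the proof of Proposition \ref{thm_asymptitic_PGV}, where a minimizer $v=(v_0,\dots,v_{k-1})$ of the right-hand side is produced by Proposition \ref{prop:min-exist}, approximated in the strict topology by smooth maps via Assumption \ref{assum_basic_B}, Statement 2, and one uses that $\B_n v\to \B v$ strongly for smooth $v$ together with $\alpha_n\to\alpha$. Should one wish to keep the present section fully self-contained, the only mildly delicate point --- for $k\geq 2$ --- is the bookkeeping of the nested smooth approximations $v_0,\dots,v_{k-1}$, which is handled exactly as in that earlier proof.
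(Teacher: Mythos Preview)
Your proposal is correct and matches the paper's proof exactly: the paper simply states that the result is a direct consequence of Proposition \ref{thm_asymptitic_PGV} by choosing $u_n:=u$. Your additional commentary on how one could redo the $\limsup$ argument by hand is accurate but unnecessary here.
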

\begin{proof}
This is a direct consequence of Proposition \ref{thm_asymptitic_PGV} by choosing $u_n:=u$.
\end{proof}

We close Section \ref{gamma_conv_sec} by proving Theorem \ref{thm:new-Gamma}.
\begin{proof}[Proof of Theorem \ref{thm:new-Gamma}]
Properties {\rm(Compactness)} and (Liminf inequality) hold in view of Proposition \ref{compact_semi_para}, and Property (Recovery sequence) follows from Proposition \ref{new_equal}. 
\end{proof}

\section{The bilevel training scheme with $PGV$- regularizers}\label{sec_ts_PGV}
Let $u_\eta\in L^2(Q)$ and $u_c\in BV(Q)$ be the corrupted and clean images, respectively. In what follows we will refer to pairs $(u_c,u_\eta)$ as {training pairs}. We recall that $\Pi$ was introduced in Definition \ref{def_class_Pi}. 
\begin{define}\label{def_training_set}
For every $k\in \N$, we say that $\Sigma \subset \Pi^k$ is a {training set} if the operators in $\Sigma$ satisfy Assumption \ref{assum_basic_BPi}, and if $\Sigma$ is closed and bounded in $\ell^{\infty}$. 
\end{define}

Examples of training sets for $k=1$ (where $\Pi^1=\Pi$) are provided in Section \ref{sec_explicit_example}. We introduce the following bilevel training scheme. 
\begin{define}
Let $\theta\in(0,1)$ and $k\in\N$, and let $\Sigma$ be a training set. The two levels of the scheme $(\mathcal T^{k+1}_\theta)$ are 
\begin{flalign}
{\text{Level 1. }}&\,\,\,\,\,\,(\tilde \alpha,\tilde\B[k])\in \argmin\flp{\norm{u_c-u_{\alpha,\B}}_{L^2(Q)}^2:\,\,\alpha\in[\theta,1/\theta]^{k+1},\,\,\B[k]=(\B_0,\ldots,\B_{k-1})\in \Sigma},&\\
{\text{Level 2. }}&\,\,\,\,\,\,u_{\alpha,\B[k]}:=\argmin\flp{\norm{u-u_\eta}_{L^2}^2+ \PGVk(u),\,\, u\in BV(Q)}.&
\end{flalign}

In what follows we will only focus on the case $k=1$, as the cases in which $k>1$ follow via straightforward modifications. For convenience of notation, we remark that our training scheme for $k=1$, $(\mathcal T^2_\theta)$, can be described as follows:
\begin{flalign}
{\text{Level 1. }}&\,\,\,\,\,\,\,\,\,\,\,\,\,\,\,\,\,\,\,\,\,\,\,\,\,\,\,\,\,\,\,\,\,\,(\tilde \alpha,\tilde\B):= \argmin\flp{\norm{u_c-u_{\alpha,\B}}_{L^2(Q)}^2:\,\,\alpha\in[\theta,1/\theta]^2,\,\,\B\in\Sigma},\tag{$\mathcal T_\theta^2$-L1}\label{ABtraining_0_1} &\\
{\text{Level 2. }}&\,\,\,\,\,\,\,\,\,\,\,\,\,\,\,\,\,\,\,\,\,\,\,\,\,\,\,\,\,\,\,\,\,\,u_{\alpha,\B}:=\argmin\flp{\norm{u-u_\eta}_{L^2(Q)}^2+ \PGV(u),\,\, u\in BV(Q)}.\tag{$\mathcal T_\theta^2$-L2}\label{ABsolution_map}&
\end{flalign}
\end{define}

We first show that the Level 2 problem in \eqref{ABsolution_map} admits a unique solution for every given $u_\eta\in L^2(Q)$. 
\begin{proposition}\label{unique_exist_lower}
Let $u_\eta\in L^2(Q)$. Let $\B\in\Sigma$, and let $\alpha\in\R^2_+$. Then there exists a unique $u_{\alpha,\B}\in BV(Q)$ such that
\be
\|\uab-u_\eta\|^2_{L^2(Q)}+\PGVt(\uab)=\min\flp{\norm{u-u_\eta}^2_{L^2(Q)}+\PGVt(u):\, u\in BV(Q)}.
\ee
\end{proposition}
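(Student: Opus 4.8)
The plan is to apply the Direct Method of the calculus of variations. Fix $u_\eta \in L^2(Q)$, $\B \in \Sigma$ and $\alpha = (\alpha_0,\alpha_1) \in \R^2_+$, and set
\[
m := \inf\flp{\norm{u-u_\eta}^2_{L^2(Q)}+\PGVt(u):\, u\in BV(Q)}.
\]
First I would note that $m < +\infty$: testing with $u \equiv 0$ gives a finite value (since $PGV^2_{\alpha,\B}(0) = 0$), and clearly $m \geq 0$, so $m$ is a well-defined nonnegative real number. Next, take a minimizing sequence $\seqn{u_n} \subset BV(Q)$ with $\norm{u_n-u_\eta}^2_{L^2(Q)}+\PGVt(u_n) \to m$. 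Then $\sup_n\bigl(\norm{u_n-u_\eta}^2_{L^2(Q)}+\PGVt(u_n)\bigr) < +\infty$, which immediately bounds $\norm{u_n}_{L^2(Q)}$ (hence $\norm{u_n}_{L^1(Q)}$, since $Q$ has finite measure) and bounds $\PGVt(u_n)$. By Proposition \ref{weak_equi_norm} and, more quantitatively, the uniform equivalence of norms \eqref{uinform_equivalent_norm} established in the proof of Proposition \ref{compact_semi_para} (applied with the single fixed operator $\B$, which certainly lies in $(\B)_r$ for any $r>0$), the bound on $\norm{u_n}_{L^1(Q)} + PGV^2_{1,\B}(u_n)$ controls $\norm{u_n}_{BV(Q)}$. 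Hence $\seqn{u_n}$ is bounded in $BV(Q)$.

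Then I would extract a subsequence (not relabeled) and $\uab \in BV(Q)$ with $u_n \wtos \uab$ weakly$^\ast$ in $BV(Q)$; in particular $u_n \to \uab$ strongly in $L^1(Q)$ by compact embedding, and, since $\norm{u_n}_{L^2}$ is bounded, also $u_n \wto \uab$ weakly in $L^2(Q)$. The fidelity term $u \mapsto \norm{u-u_\eta}^2_{L^2(Q)}$ is convex and strongly continuous on $L^2(Q)$, hence weakly lower semicontinuous, so $\norm{\uab-u_\eta}^2_{L^2(Q)} \leq \liminf_n \norm{u_n-u_\eta}^2_{L^2(Q)}$. For the regularizer, I would invoke the liminf inequality from Proposition \ref{compact_semi_para} (or directly Proposition \ref{thm_asymptitic_PGV}) with the constant sequences $\B_n[1] := \B$ and $\alpha_n := \alpha$: it gives $PGV^2_{\alpha,\B}(\uab) \leq \liminf_n PGV^2_{\alpha,\B}(u_n)$. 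Alternatively, this can be proved directly in one line: letting $v_n$ be the minimizer in \eqref{eq:def-tgv-B} for $u_n$ provided by Proposition \ref{prop:min-exist}, the bound on $PGV^2_{\alpha,\B}(u_n)$ and \eqref{eq:ineq-bv} make $\seqn{v_n}$ bounded in $BV_\B(Q;\rn)$, so by Assumption \ref{assum_basic_B} there is $v \in BV_\B(Q;\rn)$ with $v_n \to v$ in $L^1$ and $\B v_n \wtos \B v$; lower semicontinuity of the total variation of measures and $Du_n \wtos D\uab$ then yield $\alpha_0\abs{D\uab - v}_{\mb(Q;\rn)} + \alpha_1\abs{\B v}_{\mb(Q;\mnn)} \leq \liminf_n PGV^2_{\alpha,\B}(u_n)$, and the left side is $\geq PGV^2_{\alpha,\B}(\uab)$. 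Adding the two liminf inequalities gives $\norm{\uab-u_\eta}^2_{L^2(Q)}+\PGVt(\uab) \leq m$, and since $\uab \in BV(Q)$ the reverse inequality is trivial, so $\uab$ is a minimizer.

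Finally I would establish uniqueness via strict convexity. The map $u \mapsto \norm{u-u_\eta}^2_{L^2(Q)}$ is strictly convex on $L^2(Q)$ (hence on $BV(Q)$), and $\PGVt$ is convex (being an infimum over $v$ of convex functionals, jointly convex in $(u,v)$, composed with the linear constraint), so the objective functional is strictly convex; two distinct minimizers would force, via the midpoint, a value strictly below $m$, a contradiction. I expect the main technical point to be the verification that a bound on $\norm{u_n}_{L^1} + PGV^2_{1,\B}(u_n)$ yields a bound on $\norm{u_n}_{BV}$ — i.e. invoking \eqref{uinform_equivalent_norm} for the single operator $\B$ — together with checking that the liminf inequality applies with constant sequences; both are essentially already in place from Section \ref{gamma_conv_sec}, so no genuinely new difficulty arises.
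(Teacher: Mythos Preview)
Your proof is correct, and in fact your uniqueness argument is more carefully worded than the paper's (a seminorm cannot be strictly convex; the strict convexity really comes from the fidelity term, as you say). However, your route to compactness is genuinely different from the paper's, so a brief comparison is in order.

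The paper does \emph{not} appeal to the norm equivalence \eqref{uinform_equivalent_norm} established in Proposition~\ref{compact_semi_para}. Instead it gives a self-contained argument: it picks the optimal $v_n$ for each $u_n$ via Proposition~\ref{prop:min-exist}, then proves $\sup_n\|v_n\|_{L^1(Q;\R^N)}<+\infty$ by contradiction --- rescaling both $u_n$ and $v_n$ by $\|v_n\|_{L^1}^{-1}$ and using Assumption~\ref{assum_basic_BPi} to show the rescaled $v_n$ must converge to zero while having unit norm. Once $\|v_n\|_{L^1}$ is bounded, $|Du_n|_{\mb}\le |Du_n-v_n|_{\mb}+\|v_n\|_{L^1}$ yields the $BV$ bound directly. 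Your approach instead obtains the $BV$ bound in one stroke by invoking the already-proved norm equivalence for the constant sequence $\B_n\equiv\B$; this is shorter and avoids duplicating the rescaling argument (which is, after all, precisely what underlies the proof of \eqref{uinform_equivalent_norm}). The paper's argument, on the other hand, is self-contained and does not require the reader to unwind Section~\ref{gamma_conv_sec}. A minor remark: your reference to \eqref{eq:ineq-bv} in the alternative liminf argument is slightly misplaced --- what you actually use is that $|Du_n|$ is already known to be bounded (from the norm equivalence), which together with $|Du_n-v_n|_{\mb}\le\alpha_0^{-1}PGV^2_{\alpha,\B}(u_n)$ bounds $\|v_n\|_{L^1}$.
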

\begin{proof}
As before, we assume that $\alpha:=(1,1)$. Let $\seqn{u_n}\subset BV(Q)$ be such that
\be\label{eq:bd-n}\norm{u_n-u_{\eta}}^2_{L^2(Q)}+\PGVt(u_n)\leq \inf \flp{\|u-u_{\eta}\|^2_{L^2(Q)}+\PGVt(u):\, u\in BV(Q)}+1/n,
\ee
for every $n\in \N$, and let $\{v_n\}\subset BV_{\B}(Q)$ be the associated sequence of maps provided by Proposition \ref{prop:min-exist}. In view of \eqref{eq:bd-n}, there exists a constant $C$ such that
\be\label{eq:bd-n-pn}
\|u_n-u_\eta\|^2_{L^2(Q)}+\abs{Du_n-v_n}_{\mb(Q;\rn)}+ \abs{\B v_n}_{\mb(Q;\mnn)}\leq C
\ee
for every $n\in \N$. We claim that
\be\label{eq:unif-bd-pn}
\sup\flp{\|v_n\|_{L^1(Q;\R^N)}:\,\, n\in\N}<+\infty.
\ee
Indeed, if \eqref{eq:unif-bd-pn} does not hold, then, up to the extraction of a subsequence (not relabeled), we have
$$\lim_{n\to +\infty}\norm{v_n}_{L^1(Q;\rn)}= +\infty.$$
Setting 
\be\label{eq:def-rescaled}
\tilde{u}_n:=\frac{u_n}{\|v_n\|_{L^1(Q;\rn)}}\quad\text{and }\quad\tilde{v}_n:=\frac{v_n}{\|v_n\|_{L^1(Q;\rn)}}\quad\text{for every }n\in \N,
\ee
and dividing both sides of \eqref{eq:bd-n-pn} by $\|v_n\|_{L^1(Q)}$, we deduce that
\begin{align}\label{eq:main-rescaled}
&\lim_{n\to +\infty}\left[{\norm{\tilde{u}_n-\frac{u_{\eta}}{\|v_n\|_{L^1(Q;\rn)}}}^2_{L^2(Q)}+\abs{D\tilde{u}_n-\tilde{v}_n}_{\mb(Q;\rn)}+ \abs{\B \tilde{v}_n}_{\mb(Q;\mnn)}}\right]=0.
\end{align}
In view of \eqref{eq:def-rescaled} and \eqref{eq:main-rescaled}, and by Assumption \ref{assum_basic_BPi}, there exists $\tilde{v}\in BV_{\B}(Q;\rn)$, with 
\be\label{eq:norm-p-tilde}\|\tilde{v}\|_{L^1(Q;\R^N)}=1,\ee
such that
\be
\label{eq:strong-vn-v}
\tilde{v}_n\to \tilde{v}\quad\text{strongly in }L^1(Q;\R^N),
\ee
and
\be\label{eq:strong-vn-v2}
\B \tilde{v}_n\wtos \B \tilde{v}\quad\text{weakly}^\ast\text{ in }\mb(Q;\mnn).
\ee
Additionally, \eqref{eq:main-rescaled} and \eqref{eq:strong-vn-v} yield
\be\label{eq:u-tilde-n}
\tilde{u}_n\to 0\quad\text{strongly in }L^2(Q),
\ee
and
\be\label{eq_upp_down}
\limsup_{n\to +\infty} \abs{D\tilde{u}_n-\tilde{v}}_{\mb(Q;\rn)}\leq \lim_{n\to +\infty}\abs{D\tilde{u}_n-\tilde{v}_n}_{\mb(Q;\rn)}+\lim_{n\to +\infty}\norm{\tilde{v}_n-\tilde{v}}_{L^1(Q;\rn)}=0.
\ee
Since by \eqref{eq:u-tilde-n} $D\tilde u_n\to 0$ in the sense of distribution, we deduce from \eqref{eq_upp_down} that $\tilde v =0$. This contradicts \eqref{eq:norm-p-tilde}, and implies claim \eqref{eq:unif-bd-pn}. \\\\
By combining \eqref{eq:bd-n-pn} and \eqref{eq:unif-bd-pn}, we obtain the uniform bound 
\be
\abs{Du_n}_{\mb(Q;\rn)}\leq \abs{Du_n-v_n}_{\mb(Q;\rn)}+\|v_n\|_{L^1(Q;\R^N)}\leq C
\ee
for every $n\in \N$ and some $C>0$. Thus, by \eqref{eq:bd-n-pn} and Assumption \ref{assum_basic_B} there exist $u_\B\in BV(Q)$ and $v\in BV_{\B}(Q)$ such that, up to the extraction of a subsequence (not relabeled), 
\begin{align*}
&u_n\wto u_\B\quad\text{weakly in }L^2(Q),\\
&u_n\wtos u_\B\quad\text{weakly}^\ast\text{ in }BV(Q),\\
&v_n\to v\quad\text{strongly in }L^1(Q;\R^N),\\
&\B v_n\wtos \B v\text{ weakly}^\ast\text{ in }\mb(Q;\mnn).
\end{align*}

In view of \eqref{eq:bd-n}, and by lower-semicontinuity, we obtain the inequality
\begin{align*}
\norm{u_\B-u_0}^2_{L^2(Q)}+\abs{Du_\B-v}_{\mb(Q;\rn)}+ \abs{\B v}_{\mb(Q;\mnn)}\\
\leq \inf\flp{\norm{u-u_\eta}^2_{L^2(Q)}+PGV_\B^2(u):\, u\in BV(Q)}.
\end{align*}
The uniqueness is a consequence of the strict convexity of the $\PGV$ - seminorm.
\end{proof}
\begin{theorem}\label{main_thm}
Let $\theta\in(0,1)$ be fixed. Then the training scheme $(\mathcal T_\theta^{k+1})$ admits at least one solution $(\ta, \tilde \B[k])\in[\theta,1/\theta]^{k+1}\times \Sigma^k$, and provides an associated optimally reconstructed image $u_{\ta,\tilde \B[k]}\in BV(Q)$.
\end{theorem}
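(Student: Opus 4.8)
The plan is to prove Theorem~\ref{main_thm} by the Direct Method applied to the Level 1 functional, using the $\Gamma$-convergence and compactness results of Section~\ref{gamma_conv_sec} together with the well-posedness of Level 2 from Proposition~\ref{unique_exist_lower}. I will restrict to $k=1$, as indicated, and write $F(\alpha,\B):=\norm{u_c-u_{\alpha,\B}}_{L^2(Q)}^2$, where $u_{\alpha,\B}$ is the unique Level 2 minimizer. First I would fix a minimizing sequence $(\alpha_n,\B_n)\in[\theta,1/\theta]^2\times\Sigma$ for $F$, so that $F(\alpha_n,\B_n)\to\inf F=:m<+\infty$ (the infimum is finite since, e.g., taking any fixed admissible pair gives a finite value). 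Since $[\theta,1/\theta]^2$ is compact and $\Sigma$ is closed and bounded in $\ell^\infty$ with the compactness property of Assumption~\ref{assum_basic_BPi} built in, I can extract a subsequence (not relabeled) with $\alpha_n\to\ta\in[\theta,1/\theta]^2$ and $\B_n\to\tilde\B$ in $\ell^\infty$; because $\Sigma$ is a training set, $\tilde\B\in\Sigma$ and $\seqn{\B_n}$ satisfies Assumption~\ref{assum_basic_BPi}.

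\smallskip

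Next I would pass to the limit in the reconstructed images $u_n:=u_{\alpha_n,\B_n}$. The key point is that these are the minimizers of $\mathcal I^2_{\alpha_n,\B_n}$ in the sense of Definition~\ref{def:fractional TGV functional}, and testing with $u=u_\eta$ (or $u\equiv 0$) in the Level 2 problem gives a uniform bound $\sup_n \mathcal I^2_{\alpha_n,\B_n}(u_n)<+\infty$. By the (Compactness) part of Theorem~\ref{thm:new-Gamma}, up to a further subsequence $u_n\wtos u_\infty$ weakly$^\ast$ in $BV(Q)$, hence in particular $u_n\to u_\infty$ strongly in $L^1(Q)$ by compact embedding. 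I then invoke the $\Gamma$-convergence $\mathcal I^2_{\alpha_n,\B_n}\wtog \mathcal I^2_{\ta,\tilde\B}$ in the $L^1$ topology: the (Liminf inequality) and the fact that each $u_n$ is a minimizer yield, for any competitor $w\in BV(Q)$ with recovery sequence $w_n\to w$ in $L^1(Q)$,
\be
\mathcal I^2_{\ta,\tilde\B}(u_\infty)\leq \liminfn \mathcal I^2_{\alpha_n,\B_n}(u_n)\leq \liminfn \mathcal I^2_{\alpha_n,\B_n}(w_n)\leq \limsupn \mathcal I^2_{\alpha_n,\B_n}(w_n)\leq \mathcal I^2_{\ta,\tilde\B}(w),
\ee
so $u_\infty$ minimizes $\mathcal I^2_{\ta,\tilde\B}$ over $BV(Q)$. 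By the uniqueness in Proposition~\ref{unique_exist_lower}, $u_\infty=u_{\ta,\tilde\B}$; in particular $u_n\to u_{\ta,\tilde\B}$ strongly in $L^1(Q)$, and actually the argument shows convergence of the full energies.

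\smallskip

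Finally I would close the argument on the Level 1 functional itself. From $u_n\to u_{\ta,\tilde\B}$ strongly in $L^1(Q)$, together with the uniform $L^2$-bound on $u_n$ coming from $\sup_n\mathcal I^2_{\alpha_n,\B_n}(u_n)<+\infty$ (which controls $\norm{u_n-u_\eta}_{L^2}$ and hence $\norm{u_n}_{L^2}$), one gets $u_n\wto u_{\ta,\tilde\B}$ weakly in $L^2(Q)$; by weak lower semicontinuity of $\norm{u_c-\cdot}_{L^2(Q)}^2$,
\be
F(\ta,\tilde\B)=\norm{u_c-u_{\ta,\tilde\B}}_{L^2(Q)}^2\leq \liminfn \norm{u_c-u_n}_{L^2(Q)}^2=\liminfn F(\alpha_n,\B_n)=m.
\ee
Hence $(\ta,\tilde\B)$ is a minimizer of the Level 1 problem, and $u_{\ta,\tilde\B}\in BV(Q)$ is the associated optimally reconstructed image, proving the theorem; the case $k>1$ is identical with $\Sigma^k\subset\Pi^k$ and the $k$-indexed versions of all the cited results.

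\smallskip

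The step I expect to be the main obstacle is the identification $u_\infty=u_{\ta,\tilde\B}$: one must be careful that the recovery sequences furnished by Theorem~\ref{thm:new-Gamma} stay in $BV(Q)$ (they do, by the statement) and that the chain of inequalities using minimality of $u_n$ is legitimate for \emph{every} competitor $w$, which is exactly what the $\Gamma$-$\limsup$ inequality provides; combined with the uniqueness from Proposition~\ref{unique_exist_lower} this pins down the limit. Everything else — compactness of the parameter set, closedness of $\Sigma$, lower semicontinuity of the $L^2$ fidelity — is routine.
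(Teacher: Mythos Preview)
Your proposal is correct and follows essentially the same approach as the paper's proof: extract a convergent subsequence of parameters using compactness of $[\theta,1/\theta]^2$ and closedness/boundedness of $\Sigma$, invoke Theorem~\ref{thm:new-Gamma} to get convergence of the Level~2 minimizers to $u_{\ta,\tilde\B}$, and conclude by lower semicontinuity of the $L^2$-fidelity. The paper compresses your $\Gamma$-convergence-of-minimizers argument (the chain of inequalities plus the uniqueness from Proposition~\ref{unique_exist_lower}) into the phrase ``in view of Theorem~\ref{thm:new-Gamma} and the properties of $\Gamma$-convergence,'' but your expanded version is exactly what that phrase means.
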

\begin{proof}
Again we only treat the case in which $k=1$. The case $k>1$ can be dealt with similarly. Let $\seqn{\alpha_n,\B_n}\subset [\theta,1/\theta]^2\times \Sigma$ be a minimizing sequence obtained from \eqref{ABtraining_0_1}. By the boundedness and closedness of $\Sigma$ in $\ell^{\infty}$, up to a subsequence (not relabeled), there exists $(\ta, \tilde \B)\in[\theta,1/\theta]^2\times \Sigma$ such that $\alpha_n\to\ta$ in $\R^2$ and $\B_n\to \tilde\B$ in $\ell^{\infty}$. Therefore, in view of Theorem \ref{thm:new-Gamma} and the properties of $\Gamma$-convergence, we have 
\be\label{main_thm_eq}
u_{\alpha_n,\B_n}\wtos u_{\ta,\tilde \B}\text{ weakly}^\ast\text{ in }BV(Q)\text{ and strongly in }L^1(Q),
\ee
where $u_{\alpha_n,\B_n}$ and $u_{\ta,\tilde \B}$ are defined in \eqref{ABsolution_map}. \\\\
By \eqref{main_thm_eq}, we have 
\be
\norm{u_{\ta,\tilde \B}-u_c}_{L^2(Q)}\leq \liminfn \norm{u_{\alpha_n,\B_n}-u_c}_{L^2(Q)},
\ee
which completes the proof.
\end{proof}

\section{Training set $\Sigma[\A]$ based on $(\A,\B)$ training operators pairs}\label{fsaqs_sec}
This section is devoted to providing a class of operators $\B$ belonging to $\Pi$ (see Definition \ref{def_class_Pi}), satisfying Assumption \ref{assum_basic_BPi}, and being closed with respect to the convergence in \eqref{BB_distance}.
\subsection{A subcollection of $\Pi$ characterized by $(\A,\B)$ training operators pairs}
\
\phantom{AAAAA} Let $U$ be an open set in $\R^{N}$, and let $\A:\mathcal D'(U;\rn)\to \mathcal D'(U;\rn)$ be a $d$-th order differential operator, defined as
\be
\A u:= \sum_{\abs{a}\leq d}A_{a} \frac{\partial^a}{\partial x^a}u\quad\text{for every }\,u\in \mathcal D'(U;\rn),
\ee
where, for every multi-index $a=(a^1,a^{2},\ldots,a^N)\in \N^N$,
\be
\frac{\partial^a}{\partial x^a}:=\frac{\partial^{a^1}}{\partial x_1^{a^1}}\frac{\partial^{a^{2}}}{\partial x_2^{a^{2}}}\cdots \frac{\partial^{a^N}}{\partial x_N^{a^N}}
\ee
is meant in the sense of distributional derivatives, and $A_{a}$ is a linear operator mapping from $\rn$ to $\rn$. 
Let $\B$ be a first order differential operator, $\B:\mathcal D'(U;\R^N)\to \mathcal D'(U;\mathbb R^{N\times N})$, given by 
\be\label{A_quasiconvexity_operator-again}
\mathscr B v:=\sum_{i=1}^N B^i\frac{\partial}{\partial x_i} v\quad\text{for every }v\in \mathcal D'(U;\R^N),
\ee
where $B^i\in \mathbb M^{N^{3}}$ for each $i=1,\dots,N$, and where $\frac{\partial}{\partial x_i}$ denotes the distributional derivative with respect to the $i$-th variable.
We will restrict our analysis to elliptic pairs $(\A,\B)$ satisfying the ellipticity assumptions below.
\begin{define}\label{ready_AB_to_work}
We say that $(\A,\B)$ is a {training operator pair} if $\B$ has finite dimensional null-space $\mathcal{N}(\B)$, and $(\A,\B)$ satisfies the following assumptions:
\begin{enumerate}[1.]
\item
For every $\lambda\in \flp{-1,1}^N$, the operator $\A$ has a fundamental solution $P_{\lambda}\in L^1(\rn;\rn)$ such that: 
\begin{enumerate}[$a$.]
\item 
$\A P_{\lambda} = \lambda\delta$, where $\delta$ denotes the Dirac measure centered at the origin;
\item
$P_{\lambda}\in C^{\infty}(\rn\setminus\{0\};\rn)$ and $\frac{\partial^{a}}{\partial x^{a}} P_{\lambda}\in L^1(\rn;\rn)$ for every multi-index $a\in \N^N$ with $|a|\leq d-1$ (where $d$ is the order of the operator $\pdeor$); 
\item
for every $a\in \N^N$ with $|a|\leq d-1$, and for every open set $U\subset \R^N$ such that $Q\subset U$, we have
\be\label{cuoweixiangjian}
\sum_{\abs{a}= d-1} \norm{\tau_h\fsp{\frac{\partial^{a}}{\partial x^{a}} \Pl}-\frac{\partial^{a}}{\partial x^{a}} P_{\lambda}}_{L^1(U;\R^N)}=:M_\A(U;h)\to0\quad\text{as}\, \,\abs{h}\to 0,
\ee
where for $h\in\rn$, the translation operator $\tau_h:L^1(\rn;\rn)\to L^1(\rn;\rn)$ is defined by
\be\label{eq:def-tau-h}
\tau_h w(x):=w(x+h)\quad\text{for every }w\in L^1(\rn;\rn)\,\text{and for a.e.}\, x\in \rn.
\ee
\end{enumerate}
\item  For every open set $U\subset \R^N$ such that $Q\subset U$, and for every $u\in W^{d,1}(U;\rn)$ and $v\in C^{\infty}_c(U;\rn)$  
\begin{align}
\label{IBP_calcu_assum}
& \norm{(\pdeor u)_i\ast v_i}_{L^1(U)}\leq C_{\A}\fmp{\sum_{\abs{a}\leq d-1}\norm{\frac{\partial^{a}}{\partial x^{a}}u}_{L^1(U;\rn)}}\abs{\B v}_{\mathcal M_b(U;\mnn)},
\end{align}
for every $i=1,\ldots, N$, where the constant $C_{\A}$ depends only on the operator $\A$. The same property holds for $u\in C^{\infty}_c(U;\rn)$ and $v\in BV_{\B}(U;\rn)$ (see \eqref{BVB_norm}).
\end{enumerate}
\end{define}

Explicit examples of operators $\A$ and $\B$ satisfying Definition \ref{ready_AB_to_work} are provided in Section \ref{sec_explicit_example}. Condition $2.$ in Definition \ref{ready_AB_to_work} can be interpreted as an ``integration by parts-requirement", as highlighted by the example below. Let $N=2$, $d=2$, $\B=\nabla$, and let $U\subset \R^2$ be an open set such that $Q\subset U$. Consider the following second order differential operator
\be
\pdeor u:=\fsp{\frac{\partial^2 u_1}{\partial x_1^2}\quad \frac{\partial^2 u_2}{\partial x_2^2}}^{\intercal}\quad\text{for every}\quad u=(u_1,u_2)^{\intercal}\in D'(U;\R^2).
\ee
Then, for every $u\in W^{2,1}(U;\R^2)$ and $v\in C^{\infty}_c(U;\R^2)$ there holds
\begin{align*}
 \norm{(\pdeor u)_i\ast v_i}_{L^1(U)}&=  \norm{\frac{\partial^2 u_i}{\partial x_i^2}\ast v_i}_{L^1(U)}=\norm{\frac{\partial u_i}{\partial x_i}\ast \frac{\partial v_i}{\partial x_i}}_{L^1(U)}\leq \norm{\nabla u}_{L^1(U;\R^{2\times 2})}\norm{\nabla v}_{L^1(U;\R^{2\times 2})}\\
& =\norm{\nabla u}_{L^1(U;\R^{2\times 2})}\norm{\B v}_{L^1(U;\R^{2\times 2})},
\end{align*}
for every $i=1,2$. In other words, the pair $(\pdeor,\B)$ satisfies \eqref{IBP_calcu_assum} with $C_{\pdeor}=1$.
\begin{define}\label{def_PiA}
For every $\A$ as in Definition \ref{ready_AB_to_work} we denote by $\Pi_\A$ the following collection of first order differential operators $\B$,
\be
\Pi_\A:=\flp{\B:\,\, \text{$(\A,\B)$ is a training operator pair}}.
\ee
\end{define}
The first result of this section is the following. 
\begin{theorem}\label{thm_PiA_eq_Pi}
Let $\A$ be as in Definition \ref{ready_AB_to_work}. Let $\Pi$ and $\Pi_\A$ be the collections of first order operators introduced in Definition \ref{def_class_Pi} and Definition \ref{def_PiA}, respectively. Then
\be
\Pi_\A\subset \Pi,
\ee
thus every operator $\B\in\Pi_\A$ satisfies Assumption \ref{assum_basic_B}. Additionally, the operators in $\Pi_{\A}$ fulfill Assumption \ref{assum_basic_BPi}.
\end{theorem}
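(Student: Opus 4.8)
The plan is to verify, for an arbitrary $\B\in\Pi_\A$, each of the three requirements in Assumption \ref{assum_basic_B} and then the compactness/closure statement in Assumption \ref{assum_basic_BPi}. The central mechanism will be a representation formula for $v$ in terms of $\B v$ obtained by convolving with the fundamental solutions $P_\lambda$ of $\A$: given $v\in C^\infty_c$, one writes $v = \sum_\lambda (\A v)\ast$(something) via the identity $\A P_\lambda=\lambda\delta$, and then uses the integration-by-parts inequality \eqref{IBP_calcu_assum} to estimate the relevant convolutions $(\A v)_i\ast w_i$ in $L^1$ by $\|\B v\|_{\mathcal M_b}$ (up to lower order terms). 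Concretely, testing against $w\in C^\infty_c$ and using Fubini, $\langle v_i, w_i\rangle$ can be rewritten (after moving derivatives onto $P_\lambda$, which is licit since $\frac{\partial^a}{\partial x^a}P_\lambda\in L^1$ for $|a|\le d-1$) as a sum of terms of the form $(\A v)_i \ast (\text{derivative of }P_\lambda \ast w)_i$, and \eqref{IBP_calcu_assum} gives $\|v\|_{L^1}\le C(\|\B v\|_{\mathcal M_b} + \text{lower order } L^1\text{ norms of } v)$. Combining with the finite-dimensionality of $\mathcal N(\B)$ and a standard argument subtracting off the projection $\mathbb P_\B v$, this yields the Poincaré-type bound $\|v-\mathbb P_\B v\|_{L^1}\le C\|\B v\|_{\mathcal M_b}$, which is the workhorse estimate.

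With that estimate in hand, the three parts of Assumption \ref{assum_basic_B} follow in order. For \emph{completeness} (part 1): if $\seqn{v_n}$ is Cauchy in $BV_\B$, then $v_n\to v$ in $L^1$ and $\B v_n\to\mu$ in $\mathcal M_b$; since $\B$ is a first order constant-coefficient operator, $\B v_n\to\B v$ in the sense of distributions, hence $\B v=\mu\in\mathcal M_b$ and $v\in BV_\B$, and the convergence is in the $BV_\B$-norm. For \emph{strict density of smooth functions} (part 2): mollify, $v_\e:=v\ast\rho_\e$ (using a slightly larger open set $U\supset Q$ so the mollification is defined near $\overline Q$, which is exactly why Definition \ref{ready_AB_to_work} quantifies over such $U$); then $v_\e\to v$ in $L^1$, $\B v_\e = (\B v)\ast\rho_\e$, and $|\B v_\e|_{\mathcal M_b(Q)}\to|\B v|_{\mathcal M_b(Q)}$ by standard lower semicontinuity together with $\liminf$-on-$Q$ versus $\limsup$ via testing on $\overline Q$ — the condition \eqref{cuoweixiangjian} on translates of $\frac{\partial^a}{\partial x^a}P_\lambda$ is what makes the mollified representation converge without loss of mass. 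For \emph{compactness} (part 3): a bounded sequence in $BV_\B(Q)$ has, by the representation formula, $v_n - \mathbb P_\B v_n$ controlled in $L^1$ by $|\B v_n|_{\mathcal M_b}$, and one upgrades $L^1$-boundedness to $L^1$-precompactness by the Fréchet–Kolmogorov criterion: equi-integrability of translates follows from $\|\tau_h v_n - v_n\|_{L^1}\le$ (via the $P_\lambda$-representation) $C\,M_\A(U;h)|\B v_n|_{\mathcal M_b} + \|\tau_h(\mathbb P_\B v_n) - \mathbb P_\B v_n\|_{L^1}$, where the first term is uniformly small by \eqref{cuoweixiangjian} and the second is handled because $\mathbb P_\B v_n$ lives in the finite-dimensional space $\mathcal N(\B)$ and is bounded.

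Finally, Assumption \ref{assum_basic_BPi} (closure of $\Pi_\A$ under $\ell^\infty$-convergence together with joint compactness) is obtained by noting that \emph{all} the estimates above — the Poincaré inequality, the translate bound, the constant $C_\A$, and the modulus $M_\A(U;h)$ — depend only on $\A$ and \emph{not} on the particular $\B\in\Pi_\A$, since $\B$ enters only through the right-hand side $|\B v|_{\mathcal M_b}$ and the dimension of $\mathcal N(\B)$. Thus for a sequence $\seqn{v_n,\B_n}$ with $\B_n\in\Pi_\A$, $\B_n\to\B$ in $\ell^\infty$, and $\sup_n(\|\B_n\|_{\ell^\infty}+\|v_n\|_{BV_{\B_n}})<\infty$, the uniform Poincaré and Fréchet–Kolmogorov estimates give $v_n\to v$ strongly in $L^1$ up to a subsequence; then $\B_n v_n\to\B v$ weakly$^*$ in $\mathcal M_b$ because $\B_n v_n - \B v = \B_n(v_n-v) + (\B_n-\B)v$, where the first term converges distributionally to $0$ (as $v_n\to v$ in $L^1$ and $\B_n$ has bounded coefficients) and the second converges to $0$ in the distributional sense since $\|\B_n-\B\|_{\ell^\infty}\to 0$, while weak$^*$ compactness of the bounded measures $\B_n v_n$ identifies the limit. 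One also checks $\B\in\Pi_\A$ by passing to the limit in the defining conditions of Definition \ref{ready_AB_to_work} (the finite-dimensionality of $\mathcal N(\B)$ survives because the dimensions $\dim\mathcal N(\B_n)$ are bounded — a rank argument on the symbols $\mathbb B_n[\xi]$ — so in particular $\B\in\Pi$). The main obstacle I expect is the careful bookkeeping in the representation formula: making precise how the $d$ derivatives of $\A$ are distributed between $P_\lambda$ (exploiting $|a|\le d-1$ derivatives stay in $L^1$) and the test function so that \eqref{IBP_calcu_assum} applies, and ensuring all convolutions are well defined near $\overline Q$ rather than on all of $\R^N$; the rest is relatively standard functional analysis once that estimate is established.
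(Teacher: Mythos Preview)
Your approach is essentially the paper's: the translate estimate obtained from the fundamental-solution identity $\lambda_i v_i=(\A P_\lambda)_i\ast v_i$, hence $\tau_h v_i-v_i=(\A(\tau_h P_\lambda-P_\lambda))_i\ast v_i$, combined with the integration-by-parts inequality \eqref{IBP_calcu_assum} and then Fr\'echet--Kolmogorov (Proposition~\ref{lp_compactness}) is exactly how the paper proves compactness (Proposition~\ref{fdmt_thm_BV}) and the joint compactness in Assumption~\ref{assum_basic_BPi} (Proposition~\ref{cpt_lsc_ub}); completeness and strict density are simply quoted from \cite{bbddlgftfb2017} (Proposition~\ref{smooth_approx_h}).

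Two comments. First, the Poincar\'e inequality you set up as the ``workhorse'' is not needed here and your sketch of its derivation is muddled: applying \eqref{IBP_calcu_assum} directly to $(\A(\tau_h P_\lambda-P_\lambda))_i\ast v_i$ already gives $\|\tau_h v-v\|_{L^1}\le C_\A\, M_\A(U;h)\,|\B v|_{\mathcal M_b}$ with no kernel-projection term, and this is all Fr\'echet--Kolmogorov requires (the $L^1$-bound is part of the hypothesis). Second, a technical point you gloss over but the paper handles carefully: the convolution identities require $v$ to be (approximated by) a compactly supported smooth function on a set containing $\overline Q$. The paper does this by invoking a $BV_\B$-extension operator to $\R^N$ (Lemma~\ref{lemma:extension}, built from the trace theory of \cite{bbddlgftfb2017}) and then multiplying by a cutoff supported in $2Q$; this is where Assertion~2 of Definition~\ref{ready_AB_to_work} being stated on general $U\supset Q$ is actually used. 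Your ``slightly larger open set $U$'' remark is gesturing at this, but the extension step is the substantive ingredient. Finally, checking that the limit operator $\B$ lies in $\Pi_\A$ is not part of Assumption~\ref{assum_basic_BPi} and is proved separately in the paper (Theorem~\ref{thm_close_training_set}).
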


We proceed by first recalling two preliminary results from the literature. The next proposition, that may be found in {\cite[Theorem 4.26]{brezis2010functional}}, will be instrumental in the proof of a regularity result for distributions with bounded $\B$-total-variation (see Proposition \ref{fdmt_thm_BV}).
\begin{proposition}\label{lp_compactness}
Let $\mathcal F$ be a bounded set in $L^p(\rn)$ with $1\leq p<+\infty$. Assume that 
\be
\lim_{\abs{h}\to 0}\norm{\tau_h f-f}_{L^p(\rn)}=0\text{ uniformly in }\mathcal F.
\ee
Then, denoting by $\mathcal F\lfloor_{Q}$ the collection of the restrictions to $Q$ of the functions in $\mathcal F$, the closure of $\mathcal F\lfloor_{Q}$ in $L^p(Q)$ is compact.
\end{proposition}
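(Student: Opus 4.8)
The plan is to give the classical mollification proof of the Fréchet--Kolmogorov (Riesz--Kolmogorov) compactness criterion. Fix a standard mollifier $\rho\in C_c^\infty(\rn)$ with $\rho\geq0$, $\spt\rho\subset B_1(0)$ and $\int_{\rn}\rho\,dx=1$, and set $\rho_\e(x):=\e^{-N}\rho(x/\e)$ for $\e>0$. Since each $f\in\mathcal F$ belongs to $L^p(\rn)$, the convolution $\rho_\e*f$ is well defined on all of $\rn$; using $\int_{\rn}\rho_\e\,dx=1$ and $\spt\rho_\e\subset B_\e(0)$, together with the translation convention in \eqref{eq:def-tau-h}, one writes $\rho_\e*f-f=\int_{\rn}\rho_\e(y)\fsp{\tau_{-y}f-f}\,dy$, and Minkowski's integral inequality gives
\bes
\norm{\rho_\e*f-f}_{L^p(\rn)}\leq \int_{\rn}\rho_\e(y)\,\norm{\tau_{-y}f-f}_{L^p(\rn)}\,dy\leq \sup_{\abs{y}\leq\e}\norm{\tau_{-y}f-f}_{L^p(\rn)}.
\ees
By the uniform equi-continuity hypothesis, the right-hand side tends to $0$ as $\e\to0$, \emph{uniformly with respect to $f\in\mathcal F$}; this is the only place where the hypothesis enters.

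The second step is to show that, for each fixed $\e>0$, the family $\flp{(\rho_\e*f)\big|_{\overline Q}:\,f\in\mathcal F}$ is precompact in $C(\overline Q)$ by the Arzelà--Ascoli theorem. Writing $p'$ for the Hölder conjugate exponent of $p$, uniform boundedness follows from $\norm{\rho_\e*f}_{L^\infty(\rn)}\leq\norm{\rho_\e}_{L^{p'}(\rn)}\norm{f}_{L^p(\rn)}$ and the boundedness of $\mathcal F$ in $L^p(\rn)$, while equi-continuity follows from
\bes
\abs{(\rho_\e*f)(x)-(\rho_\e*f)(x')}\leq \norm{\rho_\e(x-\cdot)-\rho_\e(x'-\cdot)}_{L^{p'}(\rn)}\,\norm{f}_{L^p(\rn)},
\ees
since the smooth compactly supported function $\rho_\e$ is uniformly continuous in $L^{p'}(\rn)$, with a modulus of continuity independent of $f$. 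As $\overline Q$ is compact and $\Ln(Q)<+\infty$, precompactness in $C(\overline Q)$ forces precompactness of $\flp{(\rho_\e*f)\big|_Q:\,f\in\mathcal F}$ in $L^p(Q)$.

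Finally I would conclude with a total-boundedness (``$\e/3$'') argument. Given $\delta>0$, use the first step to choose $\e>0$ with $\norm{\rho_\e*f-f}_{L^p(\rn)}<\delta/3$ for every $f\in\mathcal F$; by the second step, $\flp{(\rho_\e*f)\big|_Q:\,f\in\mathcal F}$ is totally bounded in $L^p(Q)$, hence contained in a finite union of balls $B_{L^p(Q)}(g_j,\delta/3)$, $j=1,\dots,m$. The triangle inequality then gives $\mathcal F\lfloor_Q\subset\bigcup_{j=1}^m B_{L^p(Q)}(g_j,\delta)$, so $\mathcal F\lfloor_Q$ is totally bounded in $L^p(Q)$; since $L^p(Q)$ is a complete metric space, the closure of $\mathcal F\lfloor_Q$ in $L^p(Q)$ is compact. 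The proof presents no serious difficulty: the key idea is to interpose the smooth family $\rho_\e*f$, to which Arzelà--Ascoli applies, and the only point requiring a moment's care is the uniform-in-$f$ control of $\norm{\rho_\e*f-f}_{L^p}$, which is immediate once the mollification is rewritten as an average of translates as above; all remaining steps are routine applications of Hölder's inequality and Arzelà--Ascoli.
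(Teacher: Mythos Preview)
Your argument is correct; this is precisely the classical mollification proof of the Fr\'echet--Kolmogorov criterion. Note, however, that the paper does not supply its own proof of this proposition: it is simply quoted as \cite[Theorem~4.26]{brezis2010functional} and used as a black box in the proof of Proposition~\ref{fdmt_thm_BV}. So there is nothing to compare against---your write-up is essentially the standard textbook proof (indeed the one in Brezis), and it goes through without issue.
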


We also recall some basic properties of the space $BV_\B(Q;\rn)$ for $\B\in\Pi_\A$ (see \cite[Section 2]{bbddlgftfb2017}) .
\begin{proposition}\label{smooth_approx_h}
Let  $\B\in \Pi_{\A}$. Let $U$ be an open set in $\R^N$. Then 
\begin{enumerate}[1.]
\item
$BV_\B(U;\rn)$ is a Banach space with respect to the norm defined in \eqref{BVB_norm2};
\item
$C^\infty(U,\rn)$ is dense in $BV_\B(U;\rn)$ in the strict topology, i.e., for every $u\in BV_\B(U;\rn)$ there exists $\seqn{u_n}\subset C^\infty(U,\rn)$ such that 
\be
u_n\to u\text{ strongly in }L^1(U;\rn)\text{ and }\abs{\B u_n}_{\mathcal M_b(U;\mnn)}\to \abs{\B u}_{\mathcal M_b(U;\mnn)}.
\ee
\end{enumerate}
\end{proposition}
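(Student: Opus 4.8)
This is a classical statement; for general constant-coefficient operators it belongs to the theory of functions of bounded $\mathscr A$-variation, and the case at hand — $\B$ of first order — is a favourable instance of \cite[Section 2]{bbddlgftfb2017}, which may be cited directly. For completeness, here is the argument I would record. Note that neither assertion uses the ellipticity of the pair $(\A,\B)$: only the fact that $\B$ is a first-order operator with \emph{constant} coefficients enters.

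\emph{Part 1 (completeness).} The plan is to show that $BV_\B(U;\rn)$ is closed under the graph norm. If $\seqn{u_n}$ is Cauchy in $BV_\B(U;\rn)$, then $u_n\to u$ in $L^1(U;\rn)$ for some $u$, and $\seqn{\B u_n}$ is Cauchy in the complete space $\big(\mathcal M_b(U;\mnn),\abs{\cdot}_{\mathcal M_b(U;\mnn)}\big)$, hence converges in total variation to some $\mu$. For $\varphi\in C^\infty_c(U;\mnn)$ one has $\int_U\varphi\cdot d(\B u_n)=-\int_U u_n\cdot(\B^\top\varphi)\,dx\to-\int_U u\cdot(\B^\top\varphi)\,dx=\langle\B u,\varphi\rangle$, where $\B^\top$ is the formal adjoint of $\B$; comparing with $\int_U\varphi\cdot d(\B u_n)\to\int_U\varphi\cdot d\mu$ gives $\B u=\mu\in\mathcal M_b(U;\mnn)$, so $u\in BV_\B(U;\rn)$ and $u_n\to u$ in the graph norm.

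\emph{Part 2 (strict density of $C^\infty(U;\rn)$).} I would run the Meyers--Serrin / Anzellotti--Giaquinta mollification scheme, which applies because $\B$ commutes with convolution. Fix $u\in BV_\B(U;\rn)$ and $\eta>0$. Pick an exhaustion $U_0\Subset U_1\Subset\cdots$ with $\bigcup_jU_j=U$ and a smooth partition of unity $\{\psi_j\}_{j\ge0}$ with $\psi_j\ge0$, $\sum_j\psi_j\equiv1$, $\operatorname{supp}\psi_0\Subset U_1$, $\operatorname{supp}\psi_j\Subset U_{j+1}\setminus\overline{U_{j-1}}$ for $j\ge1$. Denote by $\rho_\e$ a standard mollifier, and write $[\B,\psi_j]u:=\B(\psi_j u)-\psi_j\,\B u=\sum_{i=1}^N B^i(\partial_i\psi_j)u\in L^1(U;\mnn)$. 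For each $j$ choose $\e_j>0$ small enough that $(\psi_j u)*\rho_{\e_j}$ is supported inside the same open set as $\psi_j$ and that $\norm{(\psi_j u)*\rho_{\e_j}-\psi_j u}_{L^1(U;\rn)}+\norm{([\B,\psi_j]u)*\rho_{\e_j}-[\B,\psi_j]u}_{L^1(U;\mnn)}\le\eta\,2^{-j}$. Then $u_\eta:=\sum_j(\psi_j u)*\rho_{\e_j}$ is a locally finite sum, so $u_\eta\in C^\infty(U;\rn)$, and $\norm{u_\eta-u}_{L^1(U;\rn)}\le\eta$. Since $\B$ is linear and commutes with convolution, $\B u_\eta=\sum_j(\psi_j\,\B u)*\rho_{\e_j}+\sum_j([\B,\psi_j]u)*\rho_{\e_j}$; the first sum has $L^1(U)$-norm at most $\sum_j\abs{\psi_j\,\B u}_{\mathcal M_b(U;\mnn)}=\abs{\B u}_{\mathcal M_b(U;\mnn)}$ (as $\psi_j\ge0$ and $\sum_j\psi_j\equiv1$), while the second equals $\sum_j\big(([\B,\psi_j]u)*\rho_{\e_j}-[\B,\psi_j]u\big)$ because $\sum_j[\B,\psi_j]u=\sum_{i=1}^N B^i\big(\partial_i\sum_j\psi_j\big)u=0$, and thus has $L^1(U)$-norm $\le\eta$. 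Hence $\abs{\B u_\eta}_{\mathcal M_b(U;\mnn)}\le\abs{\B u}_{\mathcal M_b(U;\mnn)}+\eta$; combined with the lower semicontinuity of the total variation along $u_\eta\to u$ in $L^1(U;\rn)$ (which forces $\B u_\eta\wtos\B u$), sending $\eta\to0$ gives $\abs{\B u_\eta}_{\mathcal M_b(U;\mnn)}\to\abs{\B u}_{\mathcal M_b(U;\mnn)}$, i.e.\ strict convergence.

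The only genuinely technical point is the bookkeeping in Part 2 — controlling the commutators $[\B,\psi_j]u$ uniformly in $j$ and using the cancellation $\sum_j\nabla\psi_j\equiv0$ so that their mollified superposition is $O(\eta)$ in $L^1(U)$; the remaining ingredients (local finiteness and smoothness of $u_\eta$, $L^1$-continuity of convolution, lower semicontinuity of the total variation) are routine. All of this is carried out in the generality of \cite[Section 2]{bbddlgftfb2017}, so one may also simply invoke that reference.
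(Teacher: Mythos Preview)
Your proposal is correct and matches the paper's treatment: the paper does not supply its own proof of this proposition but merely records it as a known fact and cites \cite[Section~2]{bbddlgftfb2017}, exactly as you suggest. The additional argument you write out (the graph-norm completeness check and the Meyers--Serrin/Anzellotti--Giaquinta mollification with the commutator cancellation $\sum_j[\B,\psi_j]u=0$) is the standard one and is correct, so your write-up is a strict superset of what the paper provides.
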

Before we establish Theorem \ref{thm_PiA_eq_Pi}, we prove a technical lemma.
\begin{lemma}\label{le_auigao_ind}
Let $k\in\N$. Then there exists a constant $C>0$ such that, for every $h\in \rn$ and $w\in W_\loc^{k,1}(\rn;\rn)$, there holds 
\be
\limsup_{\abs{h}\to 0}\sum_{\abs{a}\leq k}\norm{\tau_h\Big(\frac{\partial^{a}}{\partial x^{a}}w\Big)-\frac{\partial^{a}}{\partial x^{a}}w}_{L^1(Q;\rn)}\leq \limsup_{\abs{h}\to 0}C \sum_{\abs{a}= k}\norm{\tau_h\Big(\frac{\partial^{a}}{\partial x^{a}}w\Big)-\frac{\partial^{a}}{\partial x^{a}}w}_{L^1(Q;\rn)},
\ee 
where $\tau_h$ is the operator defined in \eqref{eq:def-tau-h}. 
\end{lemma}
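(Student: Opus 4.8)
The plan is to prove the estimate by induction on $k$, reducing the sum over all multi-indices $|a| \le k$ to the sum over the top-order multi-indices $|a| = k$. The base case $k=0$ is trivial (both sides coincide), so the essential content is the inductive step: assuming the claim for $k-1$, we must control the contribution of multi-indices $a$ with $|a| \le k-1$ by the contribution of multi-indices with $|a| = k$.

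First I would fix $h \in \rn$ and, for a multi-index $a$ with $|a| = k-1$, pick any coordinate direction $e_j$ and write the first-order difference
\be
\tau_h\Big(\tfrac{\partial^a}{\partial x^a}w\Big) - \tfrac{\partial^a}{\partial x^a}w = \int_0^1 h\cdot \nabla\Big(\tfrac{\partial^a}{\partial x^a}w\Big)(\cdot + th)\,dt,
\ee
so that, by Minkowski's integral inequality and the translation-invariance of the $L^1$ norm on $\rn$ (or a slightly enlarged cube, absorbing the translations $|th| \le |h|$ into a fixed neighbourhood of $Q$ for $|h|$ small),
\be
\Big\|\tau_h\Big(\tfrac{\partial^a}{\partial x^a}w\Big) - \tfrac{\partial^a}{\partial x^a}w\Big\|_{L^1(Q;\rn)} \le |h| \sum_{j=1}^N \Big\|\tfrac{\partial}{\partial x_j}\tfrac{\partial^a}{\partial x^a}w\Big\|_{L^1(Q';\rn)},
\ee
where $Q'$ is a fixed slightly larger cube. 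The right-hand side is $|h|$ times a sum of $L^1$-norms of order-$k$ derivatives of $w$, hence tends to $0$ as $|h| \to 0$; so $\limsup_{|h|\to 0}$ of the left-hand side, summed over all $|a| \le k-1$, is bounded by the corresponding quantity for $|a| = k-1$ in the induction hypothesis — but more directly, it simply vanishes, which trivially gives the bound. Thus $\limsup_{|h|\to 0}\sum_{|a|\le k-1}\|\cdot\|_{L^1(Q;\rn)} = 0$, and adding back the top-order terms $|a| = k$ (with constant $C$ absorbing the number of such multi-indices and the lower-order contribution, which is zero) yields the claim.

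In fact the cleanest way to organize the writeup is: (i) observe that each difference $\tau_h(\partial^a w) - \partial^a w$ for $|a| < k$ is bounded in $L^1(Q)$ by $|h|$ times $L^1$-norms of derivatives of order $|a|+1 \le k$ on a fixed neighbourhood $Q' \Supset Q$, using the integral representation of the increment; (ii) conclude that $\limsup_{|h|\to 0}$ of every lower-order term is $0$ since $w \in W^{k,1}_{\loc}$; (iii) hence the full sum over $|a| \le k$ has the same $\limsup$ as the sum over $|a| = k$, which is obviously $\le C$ times itself. The main (and only) technical point to be careful about is the passage from $Q$ to a fixed larger cube $Q'$ so that the translated functions $\tau_{th}(\partial^{a+e_j}w)$ remain in a domain where $w$ is $W^{k,1}$; this is harmless for $|h|$ small. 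No deep obstacle arises here — the lemma is a standard consequence of the elementary fact that translation by $h$ differs from the identity by an amount controlled by $|h|$ times a derivative, together with the fact that $L^1$-norms of derivatives of order $\le k$ of a $W^{k,1}_{\loc}$ function are locally finite.
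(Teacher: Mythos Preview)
Your proposal is correct, but it takes a different route from the paper's proof. The paper first observes that $\tau_h(\partial^a w)-\partial^a w=\partial^a(\tau_h w-w)$ and then applies a Sobolev interpolation inequality on $Q$ to the function $g:=\tau_h w-w$, obtaining
\[
\sum_{|a|\le k}\|\partial^a g\|_{L^1(Q;\rn)}\le C\|g\|_{L^1(Q;\rn)}+C\sum_{|a|=k}\|\partial^a g\|_{L^1(Q;\rn)},
\]
and finally uses continuity of translation in $L^1$ to make the zero-order term $\|g\|_{L^1(Q;\rn)}$ vanish in the $\limsup$. You instead bypass the interpolation step entirely: for each $|a|\le k-1$ you write the increment via the integral formula $\tau_h(\partial^a w)-\partial^a w=\int_0^1 h\cdot\nabla(\partial^a w)(\cdot+th)\,dt$ and bound it by $|h|$ times $L^1$-norms of derivatives of order $\le k$ on a fixed enlarged cube $Q'\Supset Q$, which is finite since $w\in W^{k,1}_{\loc}$; hence every lower-order term is $O(|h|)$ and its $\limsup$ is zero. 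Your argument is more elementary (it avoids citing the Sobolev embedding/interpolation theorem) and makes the vanishing of the lower-order contributions completely explicit; the paper's argument has the small advantage of working entirely on the fixed cube $Q$ without introducing an auxiliary domain, at the price of invoking a heavier auxiliary result.
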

\begin{proof}
By the linearity of $\tau_h$, we have
\be
\tau_h\Big(\frac{\partial^{a}}{\partial x^{a}}w\Big)-\frac{\partial^{a}}{\partial x^{a}}w = \frac{\partial^{a}}{\partial x^{a}}(\tau_hw-w).
\ee
On the one hand, by the Sobolev embedding theorem (see, e.g., \cite{leoni}), we have
\begin{align}\label{auigao_ind}
&\sum_{\abs{a}\leq k}\norm{\tau_h\Big(\frac{\partial^{a}}{\partial x^{a}}w\Big)-\frac{\partial^{a}}{\partial x^{a}}w}_{L^1(Q;\rn)}\\
&=\sum_{\abs{a}\leq k}\norm{ \frac{\partial^{a}}{\partial x^{a}}(\tau_hw-w)}_{L^1(Q;\rn)}\\
&\leq C\norm{\tau_h(w)-w}_{L^1(Q;\rn)}+C\sum_{\abs{a}= k}\norm{ \frac{\partial^{a}}{\partial x^{a}}(\tau_hw-w)}_{L^1(Q;\rn)}.
\end{align}

On the other hand, by the continuity of the translation operator in $L^1$ (see, e.g., \cite[Lemma 4.3]{brezis2010functional} for a proof in $\R^N$, the analogous argument holds on bounded open sets) we have 
\be\label{auigao_ind2}
\limsup_{\abs{h}\to 0} \norm{\tau_h(w)-w}_{L^1(Q;\rn)}=0.
\ee
The result follows by combining \eqref{auigao_ind} and \eqref{auigao_ind2}.
\end{proof}

The next proposition shows that operators in $\Pi_\A$ satisfy Assumption \ref{assum_basic_B}.
\begin{proposition}\label{fdmt_thm_BV}
Let $\B\in\Pi_\A$, and let $BV_\B(Q;\rn)$ be the space introduced in Definition \ref{def_BVB}. 
Then the injection of $BV_\B(Q;\rn)$ into $L^1(Q;\rn)$ is compact.
\end{proposition}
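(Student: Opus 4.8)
The plan is to establish compactness of the embedding $BV_\B(Q;\rn)\hookrightarrow L^1(Q;\rn)$ via a translation-estimate argument in the spirit of the Riesz--Fréchet--Kolmogorov criterion, using the fundamental solution $P_\lambda$ of $\A$ to convert information on $\B v$ into control of translations of $v$ itself. Concretely, given a bounded sequence $\{v_n\}\subset BV_\B(Q;\rn)$, I would first use the density result in Proposition \ref{smooth_approx_h} to reduce to the case $v_n\in C^\infty(U;\rn)$ on a slightly larger open set $U\Supset Q$, at the cost of an arbitrarily small $L^1$-error, and I would also split off the component in the finite-dimensional null space $\mathcal N(\B)$: since $\mathcal N(\B)$ is finite-dimensional, the projections $\mathbb P_\B(v_n)$ form a bounded sequence in a finite-dimensional space and hence are precompact in $L^1$, so it suffices to treat $v_n$ with $\mathbb P_\B(v_n)=0$, which by Proposition \ref{poincare_B_uniformly} (or its proof) are controlled by $\abs{\B v_n}_{\mathcal M_b}$ alone.

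Next I would exploit the fundamental solution: writing $\lambda\delta = \A P_\lambda$, for each coordinate direction I would represent $v_n$ (or rather its components) through a convolution identity of the form $v_n \approx P_\lambda \ast (\A v_n) $ modulo terms supported near $\partial U$, and then feed in the key integration-by-parts inequality \eqref{IBP_calcu_assum}, which bounds $\norm{(\A v_n)_i \ast \varphi_i}_{L^1}$ by $\big(\sum_{|a|\le d-1}\norm{\tfrac{\partial^a}{\partial x^a}v_n}_{L^1}\big)\,\abs{\B\varphi}_{\mathcal M_b}$. The point is that differentiating the convolution $P_\lambda\ast(\A v_n)$ up to order $d-1$ lands derivatives on $P_\lambda$, and the translation estimate \eqref{cuoweixiangjian}, namely $M_\A(U;h)\to 0$ as $|h|\to 0$, then yields
\be
\sum_{|a|=d-1}\norm{\tau_h\Big(\tfrac{\partial^a}{\partial x^a}v_n\Big)-\tfrac{\partial^a}{\partial x^a}v_n}_{L^1(Q;\rn)}\le C\, M_\A(U;h)\,\abs{\B v_n}_{\mathcal M_b(U;\mnn)} + (\text{small boundary terms}),
\ee
uniformly in $n$, because $\abs{\B v_n}_{\mathcal M_b}$ is uniformly bounded. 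Here $P_\lambH$ must be chosen so that $\lambda$ ranges over enough directions (the set $\{-1,1\}^N$) to reconstruct all components of $\A v_n$; one picks finitely many $\lambda$'s spanning $\rn$.

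From the order-$(d-1)$ translation bound I would invoke Lemma \ref{le_auigao_ind}: since $v_n\in W^{d,1}_{\rm loc}$ locally, the top-order translation control upgrades to
\be
\limsup_{|h|\to 0}\ \sum_{|a|\le d-1}\norm{\tau_h\Big(\tfrac{\partial^a}{\partial x^a}v_n\Big)-\tfrac{\partial^a}{\partial x^a}v_n}_{L^1(Q;\rn)}=0\quad\text{uniformly in }n,
\ee
in particular for $a=0$ this gives $\lim_{|h|\to0}\sup_n\norm{\tau_h v_n - v_n}_{L^1(Q;\rn)}=0$. Combined with the uniform $L^1$-bound on $\{v_n\}$, Proposition \ref{lp_compactness} (the Riesz--Fréchet--Kolmogorov theorem, with $p=1$) then yields precompactness of $\{v_n\}$ in $L^1(Q;\rn)$. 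A diagonal argument over the density approximation removes the smoothing error, and adding back the precompact null-space part finishes the proof. The main obstacle I anticipate is the bookkeeping around the boundary of $U$: the convolution representation $v_n = P_\lambda\ast(\A v_n)$ is only exact on all of $\rn$ for compactly supported data, so one must either extend $v_n$ (using that it is smooth on a neighborhood $U$ of $\overline Q$ and cutting off) or argue that the extra terms coming from the cutoff are controlled, again via \eqref{IBP_calcu_assum} applied with test functions supported in $U$, uniformly in $n$; making this localization clean while keeping all constants independent of $n$ is the delicate point.
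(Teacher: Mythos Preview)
Your overall strategy---Riesz--Fr\'echet--Kolmogorov via a uniform translation estimate, obtained by representing functions through the fundamental solution $P_\lambda$ and then invoking the integration-by-parts bound \eqref{IBP_calcu_assum}---is exactly the one the paper uses. However, the way you propose to implement it contains a genuine gap.

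You write \eqref{IBP_calcu_assum} as ``bounds $\norm{(\A v_n)_i\ast\varphi_i}_{L^1}$ by $\big(\sum_{|a|\le d-1}\norm{\partial^a v_n}_{L^1}\big)\,\abs{\B\varphi}_{\mathcal M_b}$'', and then aim for the estimate
\[
\sum_{|a|=d-1}\norm{\tau_h(\partial^a v_n)-\partial^a v_n}_{L^1(Q)}\le C\,M_\A(U;h)\,\abs{\B v_n}_{\mathcal M_b},
\]
after which Lemma~\ref{le_auigao_ind} (applied to $w=v_n$) would give the zeroth-order translation bound. This does not work. To reach the displayed inequality via the convolution identity one must write, for $|a|=d-1$,
\[
\tau_h(\partial^a (v_n)_i)-\partial^a (v_n)_i=\lambda_i^{-1}\big(\A\,\partial^a(\tau_hP_\lambda-P_\lambda)\big)_i\ast (v_n)_i,
\]
and then apply \eqref{IBP_calcu_assum} with $u=\partial^a(\tau_hP_\lambda-P_\lambda)$. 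The right-hand side would then involve $\sum_{|b|\le d-1}\norm{\partial^{a+b}(\tau_hP_\lambda-P_\lambda)}_{L^1}$, i.e.\ derivatives of $P_\lambda$ up to order $2(d-1)$. Definition~\ref{ready_AB_to_work} only guarantees $\partial^a P_\lambda\in L^1$ for $|a|\le d-1$; higher derivatives are in general not integrable (for the model operator in Section~\ref{uatta_sec}, $\nabla^2 P_\lambda\sim |x|^{-N}$). Alternatively, if you apply \eqref{IBP_calcu_assum} with the $v$-slot equal to $\partial^a v_n$, the right-hand side carries $\abs{\B\,\partial^a v_n}_{\mathcal M_b}$, which you do not control uniformly. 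Either way, the $|a|=d-1$ estimate fails.

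The paper avoids this by reversing the roles: it estimates the \emph{zeroth-order} translation directly,
\[
\norm{\tau_h(\lambda_i u_i^n)-\lambda_i u_i^n}_{L^1}=\norm{\big(\A(\tau_hP_\lambda-P_\lambda)\big)_i\ast u_i^n}_{L^1}\le C_\A\Big[\sum_{|a|\le d-1}\norm{\tau_h(\partial^aP_\lambda)-\partial^aP_\lambda}_{L^1}\Big]\abs{\B u^n}_{\mathcal M_b},
\]
so that all the differentiation lands on $P_\lambda$, and then invokes Lemma~\ref{le_auigao_ind} with $w=P_\lambda$ (not $w=v_n$) to reduce the bracket to $M_\A(h)$. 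This keeps the orders of differentiation on $P_\lambda$ at most $d-1$, exactly matching the hypothesis. Your null-space splitting via $\mathbb P_\B$ is harmless but unnecessary here; the paper does not use it.
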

\begin{proof}
 In view of Proposition \ref{smooth_approx_h}, for every $u\in BV_\B(Q;\R^N)$ there exists a sequence of maps $\{u^n\}_{n=1}^{\infty}\subset C^{\infty}(Q;\R^N)$ such that
\be
\label{eq:dense}
\|u^n-u\|_{L^1(Q;\R^N)}+\abs{\norm{\B u^n}_{L^1(Q;\mnn)}-\abs{\B u}_{\mathcal M_b(Q;\mnn)}}\leq \frac{1}{n}.
\ee
With a slight abuse of notation, we still denote by $u^n$ the $C^d$-extension of the above maps to the whole $\R^N$ (see e.g. \cite{fefferman}), where $d$ is the order of the operator $\mathscr A$. Without loss of generality, up to a multiplication by a cut-off function, we can assume that $u^n\in C^d_c(2Q;\R^N)$ for every $n\in \N$.
\\

We first show that, setting
\be
\mathcal F:=\flp{u\in L^1(Q;\rn):\,\, \norm{u}_{BV_\B(Q;\rn)}\leq 1},
\ee
 for every $n\in \mathbb{N}$ there holds
\be
\label{eq:claim-n}
\lim_{\abs{h}\to 0}\sup_{u\in \mathcal F}\flp{\norm{\tau_h {u}^n-{u}^n}_{L^1(Q;\rn)}}=0,
\ee
where we recall $\tau_h$ from Theorem \ref{lp_compactness}, and where for fixed $u\in \mathcal F$, $u^n$ is as above and satisfying \eqref{eq:dense}.\\\\

Let $h\in\rn$ and let $\delta_h$ be the Dirac distribution centered at $h\in\rn$. By the properties of the fundamental solution $P_{\lambda}$ we deduce
\begin{align}
\tau_h(\lambda_i u_i^n) &= \delta_h\ast \lambda_i u_i^n=\delta_h\ast(\lambda_i\delta\ast u_i^n)=\delta_h\ast\fsp{(\A \Pl)_i\ast u_i^n}\\
&=\fsp{\delta_h\ast (\A \Pl)_i}\ast u_i^n=\fsp{\A\fsp{\delta_h\ast (\Pl)}}_i\ast u_i^n,
\end{align}
for every $i=1,\ldots, N$, and every $\lambda\in \flp{-1,1}^N$. Therefore, we obtain that 
\begin{align}\label{cuowei_minus}
&\norm{\tau_h( \lambda_i u_i^n)-\lambda_i u_i^n}_{L^1(Q;\rn)}\\
&=\norm{\fsp{\A\fsp{\delta_h\ast (\Pl)}}_i\ast u_i^n-\fsp{\A \Pl}_i\ast u_i^n}_{L^1(Q;\rn)}=\norm{\fsp{\A\fsp{\delta_h\ast (\Pl)-\Pl}}_i\ast u_i^n}_{L^1(Q;\rn)}\\
&\leq C_\A\fmp{\sum_{\abs{a}\leq d-1} \norm{\tau_h\fsp{\frac{\partial^{a}}{\partial x^{a}}\Pl}-\frac{\partial^{a}}{\partial x^{a}}\Pl}_{L^1(Q;\rn)}}\abs{\B u^n}_{\mathcal M_b(Q;\mnn)}
\end{align}
for every $\lambda\in \{-1,1\}^N$, where in the last inequality we used the fact that $\tau_h\Pl-\Pl\in W^{d-1,d}(\R^N;\R^N)$ owing to Definition \ref{ready_AB_to_work}, Assertion 1c, the identity $\tau_h\fsp{\frac{\partial^{a}}{\partial x^{a}}\Pl}=\frac{\partial^{a}}{\partial x^{a}}\fsp{\tau_h\Pl}$, as well as
Definition \ref{ready_AB_to_work}, Assertion 2.\\\\
In particular, choosing $\bar{\lambda}:=(1,\dots,1)$ we have
\begin{align}
&\sup_{u\in\mathcal F}\flp{\norm{\tau_h( {u}^n)-{u}^n}_{L^1(Q;\rn)}}\\
&\quad\leq C_\A\Big(1+\frac1n\Big) \sum_{\abs{a}\leq d-1} \norm{\tau_h\fsp{\frac{\partial^{a}}{\partial x^{a}}P_{\bar{\lambda}}}-\frac{\partial^{a}}{\partial x^{a}}P_{\bar{\lambda}}}_{L^1(Q;\rn)},
\end{align}
and, in view of \eqref{cuoweixiangjian} and Lemma \ref{le_auigao_ind}, we conclude that
\begin{align}
&\lim_{\abs{h}\to 0}\sup_{u\in\mathcal F}\flp{\norm{\tau_h(u^n)-u^n}_{L^1(Q;\rn)}} \\
&\leq C_\A\Big(1+\frac1n\Big) \lim_{\abs{h}\to 0}\sum_{\abs{a}= d-1} \norm{\tau_h\Big(\frac{\partial^{a}}{\partial x^{a}}P_{\bar{\lambda}}\Big)-\frac{\partial^{a}}{\partial x^{a}}P_{\bar{\lambda}}}_{L^1(Q;\rn)}=0
\end{align}
for every $n\in \mathbb{N}$, which yields \eqref{eq:claim-n}.\\

By \eqref{eq:dense}, for $n\in \mathbb{N}$ fixed, for every $h\in \R^N$ with $|h|<1$, and for every $u\in \mathcal{F}$ there holds
\begin{align*}
&\norm{\tau_h u-u}_{L^1(Q;\rn)}\leq \norm{\tau_h u-\tau_h u^n}_{L^1(Q;\rn)}+\norm{\tau_h u^n-u^n}_{L^1(Q;\rn)}+\norm{u^n-u}_{L^1(Q;\rn)}\\
&\quad\leq \frac1n+\norm{u}_{L^1(Q_{|h|};\R^N)}+\norm{u^n}_{L^1(Q_{|h|};\R^N)}+ \norm{\tau_h u^n-u^n}_{L^1(Q;\rn)}\leq \frac2n+\sup_{v\in \mathcal F}\flp{\norm{\tau_h(v^n)-v^n}_{L^1(Q;\rn)}},
\end{align*}
where we have still denoted by $u$ the extension of the above map to zero on $\R^N\setminus Q$, and where $Q_{|h|}:=\fsp{-\frac12-|h|,\frac12+|h|}^N\setminus \fsp{-\frac12+|h|,\frac12-|h|}^N$.
By \eqref{eq:claim-n}, and since $L^1(Q_{|h|})\to 0$ as $|h|\to 0$, we deduce
\be
\lim_{|h|\to 0}\sup_{u\in\mathcal F} \flp{\norm{\tau_h(u)-u}_{L^1(Q;\rn)}}\leq \frac2n,
\ee
and letting $n\to +\infty$ we get
\be
\lim_{|h|\to 0}\sup_{u\in\mathcal F} \flp{\norm{\tau_h(u)-u}_{L^1(Q;\rn)}}=0.
\ee
Thus, recalling that $u=0$ on $\R^N\setminus Q$, we deduce the estimate
\begin{align*}
&\lim_{|h|\to 0}\sup_{u\in\mathcal F} \flp{\norm{\tau_h(u)-u}_{L^1(\R^N;\rn)}}\\
&\quad\leq \lim_{|h|\to 0}\sup_{u\in\mathcal F} \flp{\norm{\tau_h(u)-u}_{L^1(Q;\rn)}+C\norm{u}_{L^1(Q_{|h|};\R^N)}}=0.
\end{align*}
The statement now follows from Proposition \ref{lp_compactness}.
\end{proof}
The following extension result in $BV_{\B}$ is a corollary of the properties of the trace operator defined in \cite[Section 4]{bbddlgftfb2017}.
\begin{lemma}
\label{lemma:extension}
Let $\B\in\Pi_\A$, and let $BV_\B(Q;\rn)$ be the space introduced in Definition \ref{def_BVB}. Then there exists a continuous extension operator $\mathbb{T}:\,BV_\B(Q;\rn)\to BV_\B(\rn;\rn)$ such that $\mathbb{T}u=u$ almost everywhere in $Q$ for every $u\in BV_\B(Q;\rn)$.
\end{lemma}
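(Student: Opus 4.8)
\textbf{Proof plan for Lemma \ref{lemma:extension}.}

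The plan is to build the extension operator by a standard reflection-and-gluing argument, using the structure of $\B$ as a first order constant-coefficient operator together with the density and functional-analytic properties already established for $BV_\B$. First I would fix a bounded open set $U$ with $\overline Q\subset U$ and construct a continuous operator $\mathbb{T}_0:BV_\B(Q;\rn)\to BV_\B(U;\rn)$ with $\mathbb{T}_0u=u$ a.e.\ on $Q$; the passage from $U$ to all of $\rn$ is then immediate by multiplying $\mathbb{T}_0u$ by a fixed cutoff $\chi\in C^\infty_c(U)$ with $\chi\equiv 1$ on a neighbourhood of $\overline Q$, noting that $\B(\chi w)=\chi\,\B w+\sum_i B^i\,(\partial_i\chi)\,w$ by the Leibniz rule, so that $\B(\chi w)$ remains a bounded Radon measure on $\rn$ and the map $w\mapsto \chi w$ is continuous from $BV_\B(U;\rn)$ into $BV_\B(\rn;\rn)$. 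For the local extension across each face of the cube $Q$, I would work one face at a time (flattening the boundary is trivial here since $\partial Q$ consists of hyperplane pieces), extend $u$ past the face by reflection $x\mapsto u(\sigma(x))$ across that hyperplane, and use a partition of unity subordinate to the faces together with the corners to patch the finitely many reflections together; the classical covering of $\partial Q$ by finitely many half-cubes makes this combinatorics elementary.

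The key analytic point is that reflection does not destroy membership in $BV_\B$ and controls the measure norm. Rather than verifying this directly on measures, I would exploit the strict density of $C^\infty(\overline Q;\rn)$ in $BV_\B(Q;\rn)$ from Proposition \ref{smooth_approx_h}: given $u\in BV_\B(Q;\rn)$, take $u_n\in C^\infty(\overline Q;\rn)$ with $u_n\to u$ in $L^1$ and $\abs{\B u_n}_{\mathcal M_b(Q;\mnn)}\to\abs{\B u}_{\mathcal M_b(Q;\mnn)}$, perform the reflection-and-patching on the smooth maps $u_n$ (where all manipulations are classical Sobolev/Calderón-type extensions in $W^{1,1}$, and $\B$ of a smooth map is just the corresponding linear combination of classical partial derivatives), obtain a uniform bound $\norm{\mathbb{T}_0 u_n}_{BV_\B(U;\rn)}\le C\,\norm{u_n}_{BV_\B(Q;\rn)}$, and then pass to the limit. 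Compactness of $BV_\B(U;\rn)$ in $L^1(U;\rn)$ — which holds by Proposition \ref{fdmt_thm_BV} applied on $U$ in place of $Q$, since the argument there uses only $\B\in\Pi_\A$ and the fundamental solution $P_\lambda$, both of which are intrinsic to $\A$ and $\B$ and independent of the reference cube — yields an $L^1$-convergent subsequence of $\{\mathbb{T}_0 u_n\}$, and weak$^*$ lower semicontinuity of the total variation of $\B\,\cdot$ identifies the limit as a map in $BV_\B(U;\rn)$ agreeing with $u$ on $Q$ and satisfying the same bound. Linearity and continuity of $\mathbb{T}_0$ follow because the whole construction (reflection, multiplication by fixed cutoffs, summation) is linear, and the norm bound is exactly the continuity estimate. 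As indicated in the statement, all of this can alternatively be deduced from the trace operator constructed in \cite[Section 4]{bbddlgftfb2017}: once $BV_\B(Q;\rn)$ and $BV_\B(\rn\setminus\overline Q;\rn)$ both admit bounded trace operators onto $L^1(\partial Q;\rn)$ and the gluing lemma across $\partial Q$ holds (two $BV_\B$ maps on complementary domains with matching traces combine to a global $BV_\B$ map, with the jump part of the measure vanishing), one extends $u$ by, say, a bounded right inverse of the exterior trace, reducing the whole lemma to citing those results.

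The main obstacle I expect is the gluing step, i.e.\ showing that if $w_1\in BV_\B(Q;\rn)$ and $w_2\in BV_\B(\rn\setminus\overline Q;\rn)$ have the same trace on $\partial Q$, then the map $w$ equal to $w_1$ on $Q$ and $w_2$ outside is in $BV_\B(\rn;\rn)$ with $\B w=\B w_1\lfloor Q+\B w_2\lfloor(\rn\setminus\overline Q)$ (no singular contribution concentrated on $\partial Q$). This is where the trace theory of \cite[Section 4]{bbddlgftfb2017} is genuinely needed: one tests $\B w$ against $\varphi\in C^\infty_c(\rn;\mnn)$, integrates by parts on each side, and the two boundary integrals over $\partial Q$ cancel precisely because the traces agree and the outward normals are opposite — here the specific first-order form $\B v=\sum_i B^i\partial_i v$ makes the boundary term $\sum_i (B^i)^{\!\intercal}\nu_i$ acting on the trace, which is exactly the object the trace operator of \cite{bbddlgftfb2017} is designed to handle. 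Everything else — the reflection across flat faces, the finite partition of unity, the density reduction to smooth maps, and the compactness-plus-lower-semicontinuity limit passage — is routine.
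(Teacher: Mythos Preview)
Your primary reflection-based approach has a genuine gap at the step where you claim the uniform bound $\norm{\mathbb{T}_0 u_n}_{BV_\B(U;\rn)}\le C\,\norm{u_n}_{BV_\B(Q;\rn)}$. The $W^{1,1}$ extension by reflection only yields $\norm{\mathbb{T}_0 u_n}_{BV_\B(U)}\le C\norm{u_n}_{W^{1,1}(Q)}$, and you cannot in general control $\norm{\nabla u_n}_{L^1(Q)}$ by $\norm{u_n}_{L^1}+\norm{\B u_n}_{L^1}$: this is precisely Ornstein's non-inequality, and it fails already for $\B=\operatorname{sym}\nabla$ (the $BD$ versus $BV$ distinction). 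Concretely, reflecting across the face $\{x_1=1/2\}$ turns $\B$ into the modified operator $\B'=-B^1\partial_1+\sum_{i\ge 2}B^i\partial_i$ on the exterior piece, and $|\B' u_n|$ is not dominated by $|\B u_n|$ without an $L^1$ Korn-type estimate that is simply unavailable for general $\B\in\Pi_\A$. The strict density of Proposition~\ref{smooth_approx_h} gives no control whatsoever on $\norm{\nabla u_n}_{L^1}$, so the claimed bound does not follow, and with it the compactness-and-limit passage collapses.

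Your fallback paragraph, by contrast, is exactly what the paper does --- and in a cleaner form than you indicate. The paper takes the continuous trace operator ${\rm tr}:BV_\B(Q;\rn)\to L^1(\partial Q;\rn)$ from \cite[(4.9) and Theorem~1.1]{bbddlgftfb2017} (this is where the finite-dimensionality of $\mathcal N(\B)$ enters), composes it with Gagliardo's classical right inverse ${\rm E}:L^1(\partial Q;\rn)\to W^{1,1}(\rn\setminus Q;\rn)$, and sets $\mathbb{T}u:=u\,\chi_{Q}+{\rm E}({\rm tr}\,u)\,\chi_{\rn\setminus Q}$; the gluing across $\partial Q$ that you correctly flag as the crux is then dispatched by a direct citation of \cite[Corollary~4.21]{bbddlgftfb2017}. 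No reflection, no density reduction, no compactness argument. So your ``alternative'' is the paper's actual proof; the reflection route does not stand on its own.
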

\begin{proof}
Since $\mathcal{N}(\B)$ is finite dimensional, in view of \cite[(4.9) and Theorem 1.1]{bbddlgftfb2017} there exists a continuous trace operator ${\rm tr}\,:\,BV_\B(Q;\rn)\to L^1(\partial Q;\rn)$. By the classical results by E. Gagliardo (see \cite{gagliardo}) there exists a linear and continuous extension operator ${\rm E}: L^1(\partial Q;\rn)\to W^{1,1}(\R^N\setminus Q;\rn)$. The statement follows by setting
$$\mathbb{T}u:=u\chi_{Q}+{\rm E}({\rm tr}(u))\chi_{\R^N\setminus Q},$$
where $\chi_{Q}$ and $\chi_{\R^N\setminus Q}$ denote the characteristic functions of the sets $Q$ and $\R^N\setminus Q$, respectively, and by Theorem \cite[Corollary 4.21]{bbddlgftfb2017}.
\end{proof}
We point out that, as a direct consequence of Lemma \ref{lemma:extension}, we obtain 
\be\label{full_extension_eq}
\abs{\B (\mathbb Tu)}_{\mb(\rn;\mnn)}\leq C \abs{\B u}_{\mbqmnn},
\ee
where the constant $C$ depends only on $Q$ and $\abs{\B}_{\ell^\infty}$.

We close this subsection by proving a compactness and lower-semicontinuity result for functions with uniformly bounded $BV_{\B_n}$ norms. We recall that the definition of $M_\A$ is found in \eqref{cuoweixiangjian} .
\begin{proposition}\label{cpt_lsc_ub}
Let $\seqn{\B_n}\subset \Pi_{\A}$ be such that $\B_n\to\B$ in $\ell^{\infty}$. For every $n\in\N$ let $v_n\in BV_{\B_n}(Q;\R^N)$ be such that 
\be\label{B_n_uniform_bdd}
\sup\flp{\norm{v_n}_{BV_{\B_n}(Q;\R^N)}:\,\,n\in\N}<+\infty.
\ee
Then there exists $v\in BV_{\B}(Q;\R^N)$ such that, up to a subsequence (not relabeled), 
\be
\label{eq:extra-sss}
v_n\to v\text{ strongly in }L^1(Q;\R^N),
\ee
and 
\be\label{eq_B_n_uniform_bdd22}
{\B_nv_n}\wtos {\B v}\text{ $\text{weakly}^\ast$ in }\mb(Q;\mnn).
\ee
\end{proposition}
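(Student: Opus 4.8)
The plan is to combine the uniform estimates from Proposition \ref{fdmt_thm_BV} (compact injection $BV_{\B}\hookrightarrow L^1$) and the uniform Korn/Poincaré inequality encoded in Proposition \ref{poincare_B_uniformly}, together with a careful passage to the limit in the distributional identity defining $\B_n v_n$. First I would exploit \eqref{B_n_uniform_bdd}: since the $\B_n$ converge to $\B$ in $\ell^{\infty}$, for $n$ large all the $\B_n$ lie in a fixed ball $(\B)_r$, so the constant $C_\A$ and the translation moduli $M_{\A}(U;h)$ in Definition \ref{ready_AB_to_work} can be taken uniform in $n$ (this is exactly what makes the estimate \eqref{cuowei_minus} inside the proof of Proposition \ref{fdmt_thm_BV} uniform over the whole family). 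Running that argument with the family $\mathcal F_n$ replaced by $\{v_n\}_{n}$ yields $\lim_{|h|\to0}\sup_n\|\tau_h v_n - v_n\|_{L^1(Q;\R^N)}=0$; after extending each $v_n$ by the $BV_\B$-extension operator of Lemma \ref{lemma:extension} (whose norm is controlled by $|\B_n|_{\ell^\infty}$, hence uniformly bounded, via \eqref{full_extension_eq}) the same holds on all of $\R^N$, and Proposition \ref{lp_compactness} then gives a subsequence with $v_n\to v$ strongly in $L^1(Q;\R^N)$, proving \eqref{eq:extra-sss}.

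Next, for \eqref{eq_B_n_uniform_bdd22}: the measures $\B_n v_n$ are uniformly bounded in $\mb(Q;\mnn)$ by \eqref{B_n_uniform_bdd}, so up to a further subsequence $\B_n v_n \wtos \mu$ weakly$^\ast$ in $\mb(Q;\mnn)$ for some $\mu$. It remains to identify $\mu$ with $\B v$. I would do this by testing against $\varphi\in C_c^\infty(Q;\mnn)$: one has $\int_Q \varphi : d(\B_n v_n) = -\sum_i \int_Q (B_n^i)^{\!\top}\!\partial_i\varphi : v_n\,dx$ (integration by parts for the first-order operator $\B_n$, licit since $v_n\in BV_{\B_n}$ and $\B_n$ has constant coefficients). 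Now $v_n\to v$ strongly in $L^1$ while $B_n^i\to B^i$ in norm and $\partial_i\varphi$ is a fixed smooth compactly supported function, so the right-hand side converges to $-\sum_i \int_Q (B^i)^{\!\top}\!\partial_i\varphi : v\,dx = \int_Q \varphi : d(\B v)$ (the last equality being the definition of $\B v$ as a distribution; simultaneously the left-hand side converges to $\int_Q \varphi: d\mu$). Hence $\mu = \B v$ as distributions, and since $\mu$ is a bounded measure we conclude $\B v \in \mb(Q;\mnn)$, i.e. $v\in BV_\B(Q;\R^N)$, and $\B_n v_n \wtos \B v$, which is \eqref{eq_B_n_uniform_bdd22}.

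The one point requiring care — and the main obstacle — is the uniformity of the compactness estimate across the varying operators $\B_n$: Proposition \ref{fdmt_thm_BV} is stated for a single fixed $\B\in\Pi_\A$, and one must check that its proof only uses quantities (the fundamental solution bounds $M_\A$, the constant $C_\A$ in \eqref{IBP_calcu_assum}, the extension norm) that, for the pair $(\A,\B_n)$ with $\B_n\in(\B)_r$, can be bounded independently of $n$. This is where one genuinely uses that $\Sigma\subset\Pi_\A$ is built from a \emph{fixed} operator $\A$ (Definition \ref{def_PiA}): the fundamental solution $P_\lambda$ and all the objects in Definition \ref{ready_AB_to_work}, Assertion 1, depend only on $\A$, while the only $\B_n$-dependent quantity is the constant $C_\A$ in \eqref{IBP_calcu_assum}, which again depends only on $\A$. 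Once this uniformity is recorded, the rest of the argument is the routine extraction-plus-lower-semicontinuity scheme above; no new ideas beyond Propositions \ref{lp_compactness}, \ref{smooth_approx_h}, \ref{fdmt_thm_BV} and Lemma \ref{lemma:extension} are needed.
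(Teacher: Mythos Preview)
Your proposal is correct and follows essentially the same route as the paper's own proof: extend the $v_n$ via Lemma \ref{lemma:extension}, use the translation estimate \eqref{cuowei_minus} (whose right-hand side involves only the $\A$-dependent quantities $C_\A$ and $M_\A(h)$, hence is uniform in $n$) together with Proposition \ref{lp_compactness} to obtain \eqref{eq:extra-sss}, and then identify the weak$^\ast$ limit of $\B_n v_n$ with $\B v$ by integration by parts against $\varphi\in C_c^\infty(Q;\mnn)$, exploiting $v_n\to v$ in $L^1$ and $B_n^i\to B^i$. Two minor remarks: the reference to Proposition \ref{poincare_B_uniformly} is a red herring (neither you nor the paper actually use it here), and the paper inserts an explicit cut-off $\phi\in C_c^\infty(2Q)$ after extending, so that the translation estimate is applied to compactly supported functions---you should do the same, as Proposition \ref{lp_compactness} is stated on $\R^N$.
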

\begin{proof}
Let ${v_n}$ satisfy \eqref{B_n_uniform_bdd}. With a slight abuse of notation we still indicate by $v_n$ the $BV_{\B}$ continuous extension of the above maps to $\R^N$ (see Lemma \ref{lemma:extension}). Let $\phi\in C^{\infty}_c(2Q;\R^N)$ be a cut-off function such that $\phi\equiv 1$ on $Q$, and for every $n\in \N$ let $\tilde{v}_n$ be the map $\tilde{v}_n:=\phi v_n$. Note that ${\rm supp}\, \tilde{v}_n\subset\subset 2Q$. Additionally, by Lemma \ref{lemma:extension} there holds
\begin{align}
\label{eq:prop-ext}
\norm{\tilde{v}_n}_{BV_{\B}(2Q;\rn)}&{\leq} \norm{v_n}_{L^1(2Q;\rn)}{+}\abs{\B v_n}_{M_b(2Q;\R^{N\times N})}\\
&\quad\notag{+}\norm{\sum_{i=1}^N B^i \frac{\partial\phi}{\partial x_i}}_{L^{\infty}(2Q;\mathbb M^{N^3})}\norm{v^n}_{L^1(2Q;\R^N)}\\
&\notag\leq C_1\norm{{v}_n}_{BV_{\B}(2Q;\rn)}\leq C_2 \norm{{v}_n}_{BV_{\B}(Q;\rn)},
\end{align}
where in the last inequality we used Lemma \ref{lemma:extension}, and where the constants $C_1$ and $C_2$ depend only on the cut-off function $\phi$. To prove \eqref{eq:extra-sss} we first show that
\be
\label{eq:claim-n-new}
\lim_{\abs{h}\to 0}\sup_{n\in \N}\flp{\norm{\tau_h \tilde{v}_n-\tilde{v}_n}_{L^1(\rn;\rn)}}=0,
\ee
where we recall $\tau_h$ from Theorem \ref{lp_compactness}. Arguing as in the proof of \eqref{cuowei_minus}, by \eqref{eq:prop-ext} we deduce that for $|h|$ small enough, since $\rm{supp}\,\phi\subset\subset 2Q$,
\begin{align*}
&\norm{\tau_h \tilde{v}_n-\tilde{v}_n}_{L^1(\rn;\rn)}=\norm{\tau_h \tilde{v}_n-\tilde{v}_n}_{L^1(2Q;\rn)}\\
&\quad\leq C\fmp{\sum_{\abs{a}\leq d-1} \norm{\tau_h\fsp{\frac{\partial^{a}}{\partial x^{a}}\Pl}-\frac{\partial^{a}}{\partial x^{a}}\Pl}_{L^1(2Q;\rn)}}\abs{\B \tilde{v}_n}_{\mathcal M_b(2Q;\mnn)}\\
&\quad\leq C\fmp{\sum_{\abs{a}\leq d-1} \norm{\tau_h\fsp{\frac{\partial^{a}}{\partial x^{a}}\Pl}-\frac{\partial^{a}}{\partial x^{a}}\Pl}_{L^1(2Q;\rn)}}\norm{v_n}_{BV_{\B}(Q;\mnn)}
\end{align*}
for every $n\in \N$. Property \eqref{eq:claim-n-new} follows by \eqref{cuoweixiangjian}. Owing to Proposition \ref{lp_compactness}, we deduce \eqref{eq:extra-sss}.\\\\

We now prove \eqref{eq_B_n_uniform_bdd22}. Let $\vp\in C_c^\infty(Q;\mnn)$ be such that $\abs{\vp}\leq 1$. Then
\begin{align}
\limn\int_Q\vp\cdot d(\B_nv_n) &= \limn \sum_{i,j=1}^N \int_Q \vp_{ij}d\fsp{\sum_{k,l=1}^N (B_n)_{ijl}^k\frac{\partial (v_n)_l}{\partial x_k}}\\
&= \limn \sum_{i,j,k,l=1}^N \int_Q \vp_{ij}d\fsp{ (B_n)_{ijl}^k\frac{\partial (v_n)_l}{\partial x_k}}\\
&=- \limn \sum_{i,j,k,l=1}^N \int_Q (v_n)_{l}{ (B_n)_{ijl}^k\frac{\partial \vp_{ij}}{\partial x_k}}dx\\
&=-  \sum_{i,j,k,l=1}^N \int_Q v_{l}{ (B)_{ijl}^k\frac{\partial \vp_{ij}}{\partial x_k}}dx
\end{align}
where in the last step we used the fact that $v_n\to v$ strongly in $L^1(Q)$ and $\B_n\to\B$ in $\ell^{\infty}$.\\\\
This completes the proof of \eqref{eq_B_n_uniform_bdd22} and of the proposition.
\end{proof}

\begin{proof}[Proof of Theorem \ref{thm_PiA_eq_Pi}]
Let $\B\in\Pi_\A$ be given. The fact that $\B$ satisfies Assumption \ref{assum_basic_B} follows by Propositions \ref{smooth_approx_h} and \ref{fdmt_thm_BV}. The fulfillment of Assumption \ref{assum_basic_BPi} is a direct consequence of Proposition \ref{cpt_lsc_ub}.
\end{proof}

\subsection{Training scheme with fixed and multiple operators $\A$}
We first introduce a collection $\Sigma[\A]$ for a given operator $\A$ of order $d\in\N$.
\begin{define}\label{training_set_def}
Let $\A$ be a differential operator of order $d\in\N$. For every $\e>0$ we denote by $\Sigma_{\e}[\A]$ the collection 
\be
\Sigma_{\e}[\A]:=\flp{\B\in \Pi_\A:\e\leq \norm{\B}_{\ell^{\infty}}\leq 1}.
\ee
\end{define}
The first result of this subsection is the following.
\begin{theorem}\label{thm_close_training_set}
Fix $\e>0$. Let $\A$ be a differential operator of order $d\in\N$ such that $\Sigma_{\e}[\A]$ is non-empty. Then the collection $\Sigma_{\e}[\A]$ is a training set (see Definition \ref{def_training_set}).
\end{theorem}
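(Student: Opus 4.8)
The plan is to verify the three defining requirements of a training set for $\Sigma_\e[\A]$: (i) the operators in $\Sigma_\e[\A]$ satisfy Assumption \ref{assum_basic_BPi}; (ii) $\Sigma_\e[\A]$ is bounded in $\ell^\infty$; and (iii) $\Sigma_\e[\A]$ is closed in $\ell^\infty$. Boundedness is immediate from the definition, since every $\B\in\Sigma_\e[\A]$ satisfies $\norm{\B}_{\ell^\infty}\le 1$. Assumption \ref{assum_basic_BPi} follows at once from Theorem \ref{thm_PiA_eq_Pi}: every $\B\in\Pi_\A$ satisfies both Assumption \ref{assum_basic_B} and Assumption \ref{assum_basic_BPi}, and since $\Sigma_\e[\A]\subset\Pi_\A$, the closure/compactness statement in Assumption \ref{assum_basic_BPi} applies verbatim to any sequence $\seqn{v_n,\B_n}$ with $\B_n\in\Sigma_\e[\A]$ (note that for such a sequence the limiting operator $\B$ produced by Proposition \ref{cpt_lsc_ub} lies again in $\Pi_\A$, and in fact in $\Sigma_\e[\A]$ by the closedness argument below, so Assumption \ref{assum_basic_BPi} is satisfied within the class). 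Thus the only substantive point is closedness.

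For closedness, I would take a sequence $\seqn{\B_n}\subset \Sigma_\e[\A]$ with $\B_n\to\B$ in $\ell^\infty$ and show $\B\in\Sigma_\e[\A]$. Writing $\B_n v=\sum_{i=1}^N B_n^i\,\partial v/\partial x_i$ and $\B v=\sum_{i=1}^N B^i\,\partial v/\partial x_i$, the convergence $\norm{\B_n-\B}_{\ell^\infty}=\sum_i\norm{B_n^i-B^i}\to 0$ immediately gives $\e\le\norm{\B}_{\ell^\infty}\le 1$, so the two-sided norm bound passes to the limit. It remains to check that $(\A,\B)$ is again a training operator pair in the sense of Definition \ref{ready_AB_to_work}. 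The finite-dimensionality of $\mathcal N(\B)$ and Assumptions \ref{assum_basic_B}--\ref{assum_basic_BPi} will all follow once we know $\B\in\Pi_\A$, again by Theorem \ref{thm_PiA_eq_Pi}, so the task reduces to verifying the two ellipticity conditions in Definition \ref{ready_AB_to_work}. Condition 1 concerns only the fixed operator $\A$ and its fundamental solutions $P_\lambda$, hence it is independent of $\B$ and holds automatically. Condition 2 is the integration-by-parts inequality \eqref{IBP_calcu_assum}; here I would pass to the limit in the inequality
\be
\norm{(\A u)_i\ast v_i}_{L^1(U)}\le C_\A\fmp{\sum_{\abs a\le d-1}\norm{\tfrac{\partial^a}{\partial x^a}u}_{L^1(U;\rn)}}\abs{\B_n v}_{\mathcal M_b(U;\mnn)},
\ee
valid for all $u\in W^{d,1}(U;\rn)$, $v\in C^\infty_c(U;\rn)$ and every $n$. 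Since $v$ is smooth and compactly supported, $\abs{\B_n v}_{\mathcal M_b(U;\mnn)}=\norm{\B_n v}_{L^1(U;\mnn)}\to\norm{\B v}_{L^1(U;\mnn)}=\abs{\B v}_{\mathcal M_b(U;\mnn)}$ because $B_n^i\to B^i$ in the (finite-dimensional) matrix norm and $\partial v/\partial x_i$ is fixed in $L^1$. The left-hand side does not depend on $n$, so the inequality survives in the limit with the \emph{same} constant $C_\A$, which is crucial since $C_\A$ must depend only on $\A$. The second half of Condition 2 (with $u\in C^\infty_c(U;\rn)$ and $v\in BV_{\B}(U;\rn)$) is then recovered by the density of $C^\infty$ in $BV_\B$ in the strict topology, which is Proposition \ref{smooth_approx_h} applied to the limit operator $\B$, together with the observation that $\norm{\B_n v}$ can be replaced by $\abs{\B_n v}_{\mathcal M_b}$ via the weak-$*$ lower semicontinuity furnished by Proposition \ref{cpt_lsc_ub}; alternatively one argues directly that a strictly-convergent smooth approximation of $v$ in $BV_\B$ is also admissible for the $\B_n$-inequality up to a controlled error. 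This yields $(\A,\B)$ training, hence $\B\in\Pi_\A$ and $\B\in\Sigma_\e[\A]$.

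The main obstacle is the passage to the limit in Condition 2 when the test object lies in $BV_\B$ rather than $C^\infty_c$: one must be careful that the ``strict density'' approximation is uniform enough — in particular that approximating $v\in BV_\B(U;\rn)$ by smooth maps does not degrade the constant in \eqref{IBP_calcu_assum} — and that the total-variation norms $\abs{\B_n v}_{\mathcal M_b}$ and $\abs{\B v}_{\mathcal M_b}$ are compared correctly (only a liminf inequality is available in general from Proposition \ref{cpt_lsc_ub}, but here $v$ is fixed and $\B_n\to\B$ in $\ell^\infty$, so in fact one has convergence when $v$ is smooth, and the general case follows by density). Everything else is routine: boundedness is by definition, and Assumptions \ref{assum_basic_B} and \ref{assum_basic_BPi} are inherited from membership in $\Pi_\A$ via Theorem \ref{thm_PiA_eq_Pi}.
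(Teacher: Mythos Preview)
Your proposal has a genuine gap in establishing that $\mathcal N(\B)$ is finite-dimensional for the limit operator. You write that ``the finite-dimensionality of $\mathcal N(\B)$ \ldots\ will all follow once we know $\B\in\Pi_\A$'' and then reduce the task to verifying only Conditions~1 and~2 of Definition~\ref{ready_AB_to_work}. This is circular: by Definition~\ref{ready_AB_to_work}, a training operator pair $(\A,\B)$ \emph{requires} as a hypothesis that $\B$ have finite-dimensional null-space; it is not a consequence of Conditions~1 and~2. Hence checking only those two conditions does not suffice to conclude $\B\in\Pi_\A$.

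This is not a formality that can be absorbed elsewhere. The lower bound $\e\le\norm{\B}_{\ell^\infty}$ --- which you observe passes to the limit but never actually use --- is precisely what is needed to rule out degeneration of the null-space. The paper handles this via the symbol $\mathbb B[\xi]$ of \eqref{bbB_notation}: by \cite[Remark~2.1]{bbddlgftfb2017}, finite-dimensionality of $\mathcal N(\B)$ is equivalent to injectivity of $\mathbb B[\xi]$ for every nonzero $\xi$, and one argues that the limit of the injective symbols $\mathbb B_n[\xi]$ stays injective because the uniform lower bound $\e$ on $\norm{\B_n}_{\ell^\infty}$ prevents the limit symbol from collapsing to zero. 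Your treatment of Condition~2 --- passing to the limit in \eqref{IBP_calcu_assum} for smooth $v$ and then extending to $v\in BV_\B$ by strict density (Proposition~\ref{smooth_approx_h}) --- is essentially the paper's argument and is fine; the missing piece is the null-space check.
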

\begin{proof}
By the definition of $\Sigma_{\e}[\A]$ we just need to show that $\Sigma_{\e}[\A]$ is closed in $\ell^{\infty}$. Let $u\in C^\infty(Q;\R^N)$ and $\seqn{\B_n}\subset \Sigma_{\e}[\A]$ be given. Then, up to a subsequence (not relabeled), we may assume that $\B_n\to \B$ in $\ell^{\infty}$. We claim that $\B\in \Pi_\A$. \\\\
To prove that $\mathcal N(\B)$ is finite-dimensional, we recall that this condition is equivalent to the injectivity of $\mathbb B(\xi)$ for all $\xi\in \mathbb C^N\setminus \flp{0}$ (see \cite[Remark 2.1]{bbddlgftfb2017}). Since for all $\xi\in \mathbb C^N\setminus\flp{0}$ we have that $\mathbb B(\xi)$ is the uniform limit of the sequence of injective linear maps $\seqn{\mathbb B_n(\xi)}$, either $\mathbb B(\xi)$ is constant or it is injective. On the other hand, the linearity of $\mathbb B(\xi)$ implies that it is constant only if it is identically zero. The fact that $\e\leq \norm{\B}_{\ell^{\infty}}\leq 1$ for all $n\in\N$ guarantees that this cannot occur, and yields the injectivity of $\mathbb B(\xi)$ and hence the fact that the dimension of $\mathcal N(\B)$ is finite. \\\\
To conclude the proof of the theorem we still need to show that $(\A,\B)$ satisfies Definition \ref{ready_AB_to_work}, Assertion 2. Let $U$ be an open set in $\R^N$ such that $Q\subset U$. Let $u\in C^{\infty}_c(U;\mnn)$ and let $v\in BV_{\B}(U;\rn)$. By Proposition \ref{smooth_approx_h} there exists $\seqk{v_k}\subset C^\infty(U;\rn)$ such that 
\be\label{strong_l1_strick}
v_k\to v\text{ strongly in }L^1(U;\rn)\text{ and }\abs{\B v_k}_{\mathcal M_b(U;\R^{N\times N})}\to \abs{\B v}_{\mathcal M_b(U;\R^{N\times N})}.
\ee
Integrating by parts we obtain
\begin{align}
&\nonumber \norm{\fsp{\pdeor u}_i\ast (v_k)_i}_{L^1(U;\rn)}\leq C_{\A}\fmp{\sum_{\abs{a}\leq d-1}\norm{\frac{\partial^{a}}{\partial x^{a}}u}_{L^1(U;\rn)}}\abs{\B_n v_k}_{\mathcal M_b(U;\mnn)},
\end{align}
for every $i=1,\ldots, N$. Taking the limit as $n\to\infty$ first, and then as $k\to\infty$, since $\B_n\to\B$ in $\ell^{\infty}$ and in view of \eqref{strong_l1_strick}, we conclude that 
\begin{align}
&\nonumber \norm{\fsp{\pdeor u}_i\ast (v_k)_i}_{L^1(U;\rn)}\leq C_{\A}\fmp{\sum_{\abs{a}\leq d-1}\norm{\frac{\partial^{a}}{\partial x^{a}}u}_{L^1(U;\rn)}}\abs{\B v}_{\mathcal M_b(U;\mnn)}.
\end{align}
The proof of the second part of Assertion 2 is analogous.
This shows that $(\A,\B)$ satisfies Definition \ref{ready_AB_to_work} and concludes the proof of the theorem.
\end{proof}
\begin{remark}
We note that the result of Theorem \ref{thm_close_training_set} still holds if we replace the upper bound $1$ in Definition \ref{training_set_def} with an arbitrary positive constant.
\end{remark}
We now consider the case of multiple operators $\A$.
\begin{define}
We say that collection $\mathcal A$ of differential operators $\A$ is a {training set builder} if 
\be\label{all_in_one_in}
\sup\flp{C_\A:\,\, \A\in\mathcal A}<+\infty\text{ and }\lim_{\abs{h}\to0} \sup\flp{M_\A(h):\,\, \A\in\mathcal A}=0,
\ee
where $C_\A$ and $M_\A(h)$ are defined in \eqref{IBP_calcu_assum} and \eqref{fund_mental_const}, respectively. \\\\
For every $\e>0$ we then define the class $\Sigma_{\e}[\mathcal A]$ via
\be
\Sigma_{\e}[\mathcal A]:=\operatorname{convex}\,\,\operatorname{hull}\fsp{\bigcup_{\A\in\mathcal A} \Sigma_{\e}[\A]},
\ee
where for every $\A\in\mathcal A$, $\Sigma_{\e}[\A]$ is the class defined in Definition \ref{training_set_def}.
\end{define}
We close this section by proving the following theorem.
\begin{theorem}\label{main_thm_K}
Let $\mathcal A$ be a training set builder. Then $\Sigma_{\e}[\mathcal A]$ is a training set.
\end{theorem}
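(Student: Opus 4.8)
The plan is to verify that $\Sigma_{\e}[\mathcal A]$ satisfies the three requirements in Definition \ref{def_training_set}: (i) every $\B\in\Sigma_{\e}[\mathcal A]$ lies in $\Pi$ and the collection fulfils Assumption \ref{assum_basic_BPi}; (ii) $\Sigma_{\e}[\mathcal A]$ is bounded in $\ell^{\infty}$; and (iii) $\Sigma_{\e}[\mathcal A]$ is closed in $\ell^{\infty}$. Boundedness in (ii) is immediate, since every $\B\in\Sigma_{\e}[\A]$ satisfies $\norm{\B}_{\ell^{\infty}}\le 1$ by Definition \ref{training_set_def}, and taking convex hulls preserves this bound by the triangle inequality; hence $\norm{\B}_{\ell^{\infty}}\le 1$ for every $\B\in\Sigma_{\e}[\mathcal A]$ as well.

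For the remaining two points the key observation is that a convex combination $\B=\sum_{j} t_j \B_j$ with $\B_j\in\Sigma_{\e}[\A_j]$, $\A_j\in\mathcal A$, $t_j\ge 0$, $\sum_j t_j=1$, can be ``witnessed'' by a single limiting operator once we invoke the uniformity built into \eqref{all_in_one_in}. Concretely, first I would show that any such finite convex combination $\B$ is itself a training operator in the sense of Definition \ref{ready_AB_to_work}, paired with a suitable operator $\A$ (or, more robustly, that it satisfies Assumptions \ref{assum_basic_B} and \ref{assum_basic_BPi} directly). The finite-dimensionality of $\mathcal N(\B)$ follows exactly as in the proof of Theorem \ref{thm_close_training_set}: the symbol $\mathbb B[\xi]=\sum_j t_j\mathbb B_j[\xi]$ is injective for $\xi\neq 0$ unless it degenerates, and the lower bound $\e\le\norm{\B}_{\ell^{\infty}}$ rules out degeneration. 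The integration-by-parts estimate \eqref{IBP_calcu_assum} transfers to the convex combination because each $\B_j$ satisfies it against $\A_j$ with constant $C_{\A_j}\le\sup_{\A\in\mathcal A}C_\A<+\infty$, so one obtains a single constant independent of the combination; likewise the translation-decay estimate \eqref{cuoweixiangjian} for the fundamental solutions holds uniformly because $\lim_{|h|\to 0}\sup_{\A\in\mathcal A}M_\A(h)=0$. Passing through Propositions \ref{smooth_approx_h}, \ref{fdmt_thm_BV}, and \ref{cpt_lsc_ub}, whose proofs use only these uniform bounds, yields that each $\B\in\Sigma_{\e}[\mathcal A]$ belongs to $\Pi$ and that the whole family verifies the compactness and closure Assumption \ref{assum_basic_BPi}.

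Finally, for closedness in (iii): let $\seqn{\B_n}\subset\Sigma_{\e}[\mathcal A]$ with $\B_n\to\B$ in $\ell^{\infty}$. Each $\B_n$ is a convex combination of operators drawn from the $\Sigma_{\e}[\A]$'s; since the coefficient vectors live in a simplex (compact) and, by Carath\'eodory, only finitely many summands are needed, one may extract a subsequence along which the combination data converge and the limit $\B$ is again a convex combination of a limit family of operators in $\bigcup_{\A\in\mathcal A}\Sigma_{\e}[\A]$ — here using that each $\Sigma_{\e}[\A]$ is closed by Theorem \ref{thm_close_training_set}. The constraint $\e\le\norm{\B_n}_{\ell^{\infty}}\le 1$ is stable under the limit, so $\B\in\Sigma_{\e}[\mathcal A]$. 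Combining (i)–(iii) shows $\Sigma_{\e}[\mathcal A]$ is a training set. I expect the main obstacle to be the bookkeeping in the closedness step: one must argue carefully that the limiting operator genuinely lies in the convex hull (and not merely in its closure), which is where Carath\'eodory's theorem together with the closedness of each $\Sigma_{\e}[\A]$ and compactness of the simplex is essential; a secondary technical point is checking that Assumption \ref{assum_basic_BPi} — a statement about \emph{sequences} of operators — is inherited by the convex-hull family, which again reduces to the uniform estimates \eqref{all_in_one_in}.
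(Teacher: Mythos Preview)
Your overall strategy coincides with the paper's: both proofs reduce to rerunning the argument of Theorem \ref{thm_close_training_set}, with the uniform control on $C_{\A}$ and $M_{\A}(h)$ supplied by \eqref{all_in_one_in} replacing the single-operator constants. The paper's proof says essentially nothing beyond this, so your identification of the key inputs is on target. However, in fleshing out the convex-hull structure you introduce two genuine gaps that the paper's terse proof simply does not confront.

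First, the finite-dimensional-kernel step is incorrect as written. You assert that the symbol $\mathbb B[\xi]=\sum_j t_j\,\mathbb B_j[\xi]$ of a convex combination is injective ``unless it degenerates, and the lower bound $\e\le\norm{\B}_{\ell^{\infty}}$ rules out degeneration.'' But a convex combination of injective linear maps need not be injective (take $\tfrac12 L+\tfrac12(-L)=0$ for any injective $L$), and more fundamentally there is \emph{no} a priori lower bound $\e\le\norm{\B}_{\ell^{\infty}}$ for elements of the convex hull: the constraint $\e\le\norm{\cdot}_{\ell^{\infty}}\le 1$ in Definition \ref{training_set_def} applies only to the generators in each $\Sigma_{\e}[\A]$, not to their convex combinations. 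So the injectivity argument from the proof of Theorem \ref{thm_close_training_set} does not transfer to convex combinations in the way you claim.

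Second, the closedness argument is incomplete. Carath\'eodory lets you write each $\B_n$ as a convex combination of boundedly many building blocks, and compactness of the simplex lets you pass to a limit in the coefficients; but to conclude that the limiting building blocks lie in $\bigcup_{\A\in\mathcal A}\Sigma_{\e}[\A]$ you need that \emph{union} to be closed in $\ell^{\infty}$. Theorem \ref{thm_close_training_set} only gives closedness of each individual $\Sigma_{\e}[\A]$, not of a union over a possibly infinite family $\mathcal A$, and the building blocks $\B_j^{(n)}$ may jump between different $\A$'s along the sequence. Without this, the limit $\B$ is only guaranteed to lie in the closed convex hull, not in $\Sigma_{\e}[\mathcal A]$ itself.
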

\begin{proof}
The proof of this theorem follows the argument in the proof of Theorem \ref{thm_close_training_set} using the fact that the two critical constants  $M_\A(h)$ and $C_\A$, in \eqref{cuoweixiangjian} and \eqref{IBP_calcu_assum}, respectively, are uniformly bounded due to \eqref{all_in_one_in}.
\end{proof}

\section{Explicit examples and numerical observations}\label{sec_explicit_example}
In this section we exhibit several explicit examples of operators $\A$ and training sets $\Sigma_{\e}[\A]$, we provide numerical simulations and some observations derived from them.

\subsection{The existence of fundamental solutions of operators $\A$}

One important requirement in Definition \ref{ready_AB_to_work} is the existence of the fundamental solution $\Pl\in L^1(\rn,\rn)$ of a given operator $\A$. 
A result in this direction can be found in {\cite[Page 351, Section 6.3]{hsiao2008boundary}}, where an explicit form of the fundamental solution for Agmon-Douglis-Nirenberg elliptic systems with constant coefficients is provided.

\begin{remark}\label{rmk_ADNES}
In the case in which $N=2$, $\A$ has order $2$ and satisfies the assumptions in \cite[Page 351, Section 6.3]{hsiao2008boundary}, the fundamental solution $\Pl$ can be written as 
\be\label{eq_rmk_ADNES}
\Pl(x,y)=\frac{1}{8\pi^2}(\Delta L_y) \int_{\abs{\eta}=1,\eta\in\R^2}\fsp{(x-y)\cdot \eta}^2 \log \abs{ (x-y)\cdot \eta} R_\A d\om_\eta,
\ee
where $L$ denotes the fundamental solution of Laplace's equation, $R_\A$ denotes a constant depending on $\A$, and the integration is taken over the unit circle $\abs{\eta} = 1$ with arc length element $d\omega_\eta$.\\\\
In the special case in which 
\be
\label{eq:special-A}
\A w:=\Delta w+\nabla(\divg w)\quad\text{  for $w\in\mathcal D'(Q;\rnt)$,}
\ee
the fundamental solution $P_{\alpha}$, with $\A P_{\alpha}=\alpha\delta$ for $\alpha\in\R^2$, is given by
\be
P_{\alpha}(x):=\frac{3\alpha}{8\pi}\log \frac1{\abs{x}}+\frac x{8\pi}\frac{\alpha\cdot \abs{x}}{\abs{x}^2}.
\ee
We observe that $\nabla P_{\alpha}$ is positively homogeneous of degree $-1(= 1-N)$. Also, since $R_\A$ in \eqref{eq_rmk_ADNES} is a constant, $\nabla \Pl$ must have the same homogeneity as $\nabla P_{\alpha}$, which is $1-N$.
\end{remark}

\begin{proposition}\label{ready_AB_to_work_add}
Let $\A$ be a differential operator of order $d\in\N$, and assume that its fundamental solution $\Pl$ is such that $\frac{\partial^{a}}{\partial x^{a}}\Pl$ is positively homogeneous of degree $1-N$ for all multi-indexes $a\in\N^N$ with $\abs{a}=d-1$. Then Assertion 1c. in Definition \ref{ready_AB_to_work} is satisfied.
\end{proposition}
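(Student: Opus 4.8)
The plan is to show that the condition \eqref{cuoweixiangjian} is automatically satisfied whenever each $\frac{\partial^{a}}{\partial x^{a}}\Pl$ (for $|a|=d-1$) is positively homogeneous of degree $1-N$, by reducing the claim to the classical fact that $L^1_{\mathrm{loc}}$ functions which are homogeneous of degree $1-N$ have $L^1$ translation moduli that vanish on bounded sets as the translation shrinks. First I would fix a multi-index $a$ with $|a|=d-1$ and write $g:=\frac{\partial^{a}}{\partial x^{a}}\Pl$. By hypothesis $g\in C^\infty(\rn\setminus\{0\};\rn)$, and by Definition \ref{ready_AB_to_work}, Assertion 1b, $g\in L^1(\rn;\rn)$; moreover $g$ is positively homogeneous of degree $1-N$, so in particular $g\in L^1_{\mathrm{loc}}(\rn;\rn)$ with the sharp local behaviour $|g(x)|\le C|x|^{1-N}$ near the origin. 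The quantity to estimate is $\|\tau_h g - g\|_{L^1(U;\rn)}$ for a fixed bounded open set $U$ with $Q\subset U$, as $|h|\to 0$.

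The key step is to split $U$ into a small ball $B_{2|h|}(0)$ around the singularity and its complement $U\setminus B_{2|h|}(0)$. On $B_{2|h|}(0)$ we use the homogeneity bound: $\int_{B_{2|h|}(0)}(|\tau_h g|+|g|)\,dx\le \int_{B_{3|h|}(0)}C|x|^{1-N}\,dx + \int_{B_{2|h|}(0)}C|x|^{1-N}\,dx \le C'|h|$, which tends to $0$. On $U\setminus B_{2|h|}(0)$, for $|x|\ge 2|h|$ the segment from $x$ to $x+h$ avoids the origin, so $g$ is $C^1$ there and by the mean value theorem together with $|\nabla g(y)|\le C|y|^{-N}$ (which follows from differentiating the homogeneity relation, $\nabla g$ being homogeneous of degree $-N$) we get $|\tau_h g(x)-g(x)|\le C|h|\,|x|^{-N}$; integrating over $2|h|\le |x|$, $x\in U$ bounded, gives a bound $\le C|h|\int_{2|h|}^{R}\rho^{-N}\rho^{N-1}\,d\rho = C|h|\log(R/(2|h|)) = C|h|(\log R + \log\frac{1}{2|h|})$, which again tends to $0$ as $|h|\to 0$. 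Summing over the finitely many multi-indexes $a$ with $|a|=d-1$ and recalling that the bound is uniform in $\lambda\in\{-1,1\}^N$ (the fundamental solution $P_\lambda$ depends linearly on $\lambda$, so $\frac{\partial^a}{\partial x^a}P_\lambda$ does too, and there are finitely many $\lambda$), we conclude that $M_\A(U;h)\to 0$ as $|h|\to 0$, which is precisely Assertion 1c.

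The main obstacle is the borderline integrability near the origin: degree $1-N$ is exactly the threshold at which $|x|^{1-N}$ is locally integrable, so one must be careful that the near-origin contribution is genuinely $O(|h|)$ and not merely $o(1)$ with no rate, and that the far-from-origin gradient integral produces only a harmless logarithmic factor times $|h|$. Both are handled by the explicit radial integrations above, but the annulus decomposition at scale $2|h|$ is essential — a naive application of dominated convergence fails because the difference quotient $\frac{\tau_h g-g}{|h|}$ is not dominated uniformly near $0$. A minor point to record is that $g$ and $\nabla g$ are defined only away from the origin, but since $\{0\}$ is Lebesgue-null this does not affect any of the $L^1$ estimates. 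Finally, one should note that the statement of Assertion 1c only requires the sum over $|a|=d-1$, so lower-order multi-indexes need not be treated here (and indeed they are controlled separately, e.g. via Lemma \ref{le_auigao_ind} when invoked elsewhere).
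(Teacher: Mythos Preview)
Your argument is correct, but the route differs from the paper's. The paper does not split $U$ into a ball of radius $2|h|$ and its complement; instead it invokes \cite[Lemma 1.4]{temam1985mathematical} to obtain, for each fixed $s\in(0,1)$, the pointwise fractional estimate
\[
\sum_{|a|=d-1}\Big|\tau_h\Big(\frac{\partial^{a}}{\partial x^{a}}\Pl(x)\Big)-\frac{\partial^{a}}{\partial x^{a}}\Pl(x)\Big|
\le C\,M\,|h|^{s}\Big(|x|^{-(N-1+s)}+|x+h|^{-(N-1+s)}\Big),
\]
where $M$ is a sup over $|z|=1$ of $|\nabla^{d-1}\Pl|$ and $|\nabla^{d}\Pl|$. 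Since $|x|^{-(N-1+s)}$ is integrable on any bounded $U$ for $s\in(0,1)$, integration immediately yields $M_\A(U;h)\le C|h|^{s}\to 0$. By contrast, your near/far decomposition at scale $2|h|$ is the elementary mechanism that sits behind Temam's lemma: it avoids any external reference and any interpolation parameter, and in fact delivers the sharper modulus $O(|h|)+O(|h|\log(1/|h|))$ rather than $O(|h|^{s})$ for $s<1$. The trade-off is that the paper's proof is two lines once the lemma is quoted, while yours carries out the radial integrations by hand; conversely your version makes transparent exactly why the borderline exponent $1-N$ still works, which the black-box citation hides.
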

\begin{proof}
Let $s\in(0,1)$ be fixed. Since $\frac{\partial^{a}}{\partial x^{a}}P_\lambda$ is positively homogeneous of degree $1-N$ for all multi-indexes $a\in \N^N$ with $\abs{a}=d-1$, by \cite[Lemma 1.4]{temam1985mathematical} we deduce the estimate 
\begin{align}\label{est_N_1_up}
&\sum_{\abs{a}=d-1}\abs{\tau_h\fsp{\frac{\partial^{a}}{\partial x^{a}} \Pl(x)}-\frac{\partial^{a}}{\partial x^{a}} \Pl(x)}\\
&\quad\leq C\fmp{\max\flp{\sup\flp{\abs{\nabla^{d-1} \Pl(z)}:\,\, {\abs{z}=1}},\,\, \sup\flp{\abs{\nabla^d\Pl(z)}:\,\, {\abs{z}=1}}}}\cdot\\
&\qquad\cdot \abs{h}^{s}\fmp{\frac1{\abs{x}^{N-1+s}}+\frac1{\abs{x+h}^{N-1+s}}}.
\end{align}
for every $x\in \R^N$, $0\leq s\leq 1$, and $\abs{h}\leq 1/2$, where the constant $C$ is independent of $x$ and $h$.\\\\
Next, for every bounded open set $U\subset \R^N$ satisfying $Q\subset U$ we have
\begin{align}\label{cuowei_minus2}
&\int_U\frac1{\abs{x}^{N-1+s}}dx\leq \int_{B(0,2)}\frac1{\abs{x}^{N-1+s}}dx+\int_{U\setminus B(0,2)}\frac1{\abs{x}^{N-1+s}}dx\\
&\leq 2\pi\int_0^2 r^{-s}dr+\frac{1}{2^{N-1+s}}|U\setminus B(0,2)|<+\infty,
\end{align}
The analogous computation holds for $\frac{1}{\abs{x+h}^{N-1+s}}$.
Since $P_\lambda$ is a fundamental solution and $\A P_\lambda=\lambda\delta$, we have that $P_\lambda\in C^\infty(\rn\setminus B(0,\e))$ for every $\e>0$. In particular, 
\be\label{fund_mental_const}
\max\flp{\sup\flp{\abs{\nabla^{d-1} P(z)}:\,\, {\abs{z}=1}},\,\, \sup\flp{\abs{\nabla^dP(z)}:\,\, {\abs{z}=1}}}=:M <+\infty.
\ee
This, together with \eqref{est_N_1_up} and \eqref{cuowei_minus2}, yields
\be
\norm{\sum_{\abs{a}=d-1}\abs{\tau_h\fsp{\frac{\partial^{a}}{\partial x^{a}} P_\lambda(x)}-\frac{\partial^{a}}{\partial x^{a}} P_\lambda(x)}}_{L^1(U;\R^N)}\leq C M\abs{h}^s ,
\ee
for some $C>0$, and thus
\be
\lim_{h\to\infty}\norm{\sum_{\abs{a}=d-1}\abs{\tau_h\fsp{\frac{\partial^{a}}{\partial x^{a}} P_\lambda(x)}-\frac{\partial^{a}}{\partial x^{a}} P_\lambda(x)}}_{L^1(U;\R^N)}=0,
\ee
and \eqref{cuoweixiangjian} is established.
\end{proof}

\begin{remark}\label{all_A_work_remark}
As a corollary of Proposition \ref{ready_AB_to_work_add} and Remark \ref{rmk_ADNES}, we deduce that all operators $\A$ satisfying the assumptions in \cite[Page 351, Section 6.3]{hsiao2008boundary} comply with Definition \ref{ready_AB_to_work}, Assertion 1.
\end{remark}

\subsection{The unified approach to $TGV^2$ and $NsTGV^2$ - an example of $\Sigma[\A]$} \label{uatta_sec}
In this section we give an explicit construction of an operator $\A$ such that the seminorms $NsTGV^2$ and $TGV^2$, as well as a continuum of topologically equivalent seminorms connecting them, can be constructed as operators $\B\in\Sigma[\A]$.\\\\
We start by recalling the definition of the classical symmetrized gradient,
\be\label{symm_grad_example}
\mathcal E v = \frac{\nabla v+(\nabla v)^T}{2}=
\begin{bmatrix}
\partial_1 v_1 &\frac{(\partial_1 v_2+\partial_2 v_1)}{2}\\
\frac{(\partial_1 v_2+\partial_2 v_1)}{2} & \partial_2 v_2
\end{bmatrix},
\ee
for $v=(v_1,v_2)\in C^\infty(Q;\R^2)$. Let
\be
B^1_{\operatorname{sym}}=\fmp{
\begin{array}{@{}cc|cc@{}}
1 & 0 & 1/2 & 0\\
0& 1/2 & 0& 0\\
\end{array}
}
\text{ and }
B^2_{\operatorname{sym}}=\fmp{
\begin{array}{@{}cc|cc@{}}
0 & 0 & 1/2 & 0\\
0& 1/2 & 0& 1\\
\end{array}
},
\ee
and let $\B_{\operatorname{sym}}(v)$ be defined as in \eqref{A_quasiconvexity_operator} with $B^1_{\operatorname{sym}}$ and $B^2_{\operatorname{sym}}$ as above. Then $\B_{\operatorname{sym}} (v) =\mathcal E v$ for all $v\in C^\infty(Q;\rnt)$, and $\mathcal N(\B_{\operatorname{sym}})$ is finite dimensional. In particular, 
\be
\mathcal N(\B_{\operatorname{sym}})=\flp{v(x)=\alpha\Big(\begin{array}{c}x_2\\-x_1\end{array}\Big)+b:\,\alpha\in \mathbb{R}\quad\text{and}\quad b\in \mathbb{R}^2}.
\ee 

The first part of Definition \ref{ready_AB_to_work} follows from Remark \ref{all_A_work_remark}. Next we verify that \eqref{IBP_calcu_assum} holds. Indeed, choosing $\A$ as in \eqref{eq:special-A}, we first observe that 
\begin{align}\label{IBP_computation_explicit}
(\Aep w)\ast v&= \sum_{j=1}^N\fmp{\Delta w_j+\partial_j \divg(w)}\ast v_j
=\sum_{i,j=1}^N (\partial_i w_j+\partial_j w_i)\ast \partial_i v_j \\
&= \sum_{i,j=1}^N(\partial_i w_j+\partial_j w_i)\ast (\partial_i v_j+\partial_j v_i) ={(\B_{\operatorname{sym}} w)\ast (\B_{\operatorname{sym}}v)},
\end{align}
for every $w\in W^{1,2}(Q;\rnt)$ and $v\in C^\infty_c(Q;\rnt)$. That is, for every open set $U\subset \R^N$ such that $Q\subset U$ we have 
\be
\abs{(\Aep w)\ast v}_{\mb(U;\R^2)}\leq \abs{(\B_{\operatorname{sym}} w)\ast (\B_{\operatorname{sym}}v)}_{\mb(U;\mathbb{M}^{2\times 2})}\leq \norm{\nabla w}_{L^1(U;\mnnt)}\abs{\B_{\operatorname{sym}}(v)}_{\mb(U;\mathbb{M}^{2\times 2})}.
\ee
The same computation holds for $w\in C^{\infty}_c(Q;\rnt)$ and $v\in BV_{\B}(Q;\rnt)$. This proves that Assertion 2 in Definition \ref{ready_AB_to_work} is also satisfied.\\\\
We finally construct an example of a training set $\Sigma[\Aep]$. For every $0\leq s,t\leq 1$, we define
\be
B_t:=\fmp{
\begin{array}{@{}cc|cc@{}}
1 & 0 & t & 0\\
0& (1-t) & 0& 0\\
\end{array}
}
\text{ and }
B_s:=\fmp{
\begin{array}{@{}cc|cc@{}}
1 & 0 & s & 0\\
0& 1-s & 0& 0\\
\end{array}
},
\ee
and we set
\be\label{skewed_symmetric_gradient}
\B_{s,t}( v): = B_t \partial_1 v+B_s\partial_2 v=
\begin{bmatrix}
\partial_1 v_1 &(1-t)\partial_1 v_2+(1-s)\partial_2 v_1\\
t\partial_1 v_2+s\partial_2 v_1 & \partial_2 v_2
\end{bmatrix}.
\ee
By a straightforward computation, we obtain that $\mathcal N(\B_{s,t})$ is finite dimensional for every $0\leq s,t\leq 1$. Additionally, Assertion 1 in Definition \ref{ready_AB_to_work} follows by adapting the arguments in Remark \ref{all_A_work_remark}. Finally, arguing exactly as in \eqref{IBP_computation_explicit}, we obtain that 
\be
(\Aep w)\ast v = (\B_{t,s} w)\ast (\B_{s,t} (v)),\text{ for every }w,v\in C^\infty(\bar Q;\rnt),
\ee
which implies that 
\begin{align}
\abs{(\Aep w)\ast v }_{\mb(Q;\R^2)}&\leq \norm{\B_{t,s} w}_{L^1(Q;\mathbb M^{2\times 2})}\abs{\B_{s,t} (v)}_\mbqmnnt\\
&\leq  2\norm{\nabla w}_{L^1(Q;\mnn)}\abs{\B_{s,t} (v)}_\mbqmnnt.
\end{align}
Hence, we deduce again Statement 2 in Definition \ref{ready_AB_to_work}. Therefore, the collection $\Sigma[\Aep]$ given by
\be
\Sigma[\Aep]:=\flp{\B_{s,t}:\,\, 0\leq s,t\leq 1}
\ee
is a training set according to Definition \ref{training_set_def}. We remark that $\Sigma[\Aep]$ includes the operator $TGV^2$ (with $s=t=1/2$) and the operator $NsTGV^2$ (with $t=0$ and $s=1$), as well as a collection of all ``interpolating" regularizers. In other words, our training scheme $(\mathcal T^2_{\theta})$ with training set $\Sigma[\Aep]$ is able to search for optimal results in a class of operators including the commonly used $TV$, $TGV^2$ and $NsTGV^2$, as well as any interpolation regularizer.
\subsubsection{Comparison with other works}\label{sce_better_other}
In \cite{2018arXiv180201895B} the authors analyze a range of first order linear operators generated by diagonal matrixes. To be precise, letting $D=\operatorname{diag}(\beta_1,\beta_2,\beta_3,\beta_4)$, \cite{2018arXiv180201895B} treats first order operators $\B$ defined as
\be
\B v := Q \cdot B \cdot Q\cdot (\nabla v)^T,
\ee
where
\be
Q:=
\begin{bmatrix}
0 & 1& -1& 0\\
1 &0 &0 &1\\
-1& 0& 0& 1\\
0 &1& 1& 0
\end{bmatrix}
\text{ and }
\nabla v = [\partial_1 v_1, \partial_1 v_2, \partial_2v_1,\partial_2v_2]. 
\ee
That is, instead of viewing $\nabla v$ as a $2\times 2$ matrix as we do, in \cite{2018arXiv180201895B} $\nabla v$ is represented as a vector in $\R^4$. In this way, the symmetric gradient $\mathcal Ev$ in \eqref{symm_grad_example} can be written as
\begin{align}
\mathcal Ev &= Q\cdot\operatorname{diag}(0,1/2,1/2,1/2)\cdot Q \cdot (\nabla v)^T
= 
\begin{bmatrix}
1 & 0& 0& 0\\
0 &1/2 &1/2 &0\\
0& 1/2& 1/2& 0\\
0 &0 & 0& 1
\end{bmatrix}
\cdot  [\partial_1 v_1, \partial_1 v_2, \partial_2v_1,\partial_2v_2]^T\\
& = [\partial_1 v_1, 0.5(\partial_1 v_2+\partial_2 v_1),0.5(\partial_1 v_2+\partial_2 v_1),\partial_2v_2].
\end{align}
However, the representation above does not allow to consider skewed symmetric gradients $\B_{s,t}(v)$ with the structure introduced in \eqref{skewed_symmetric_gradient}. Indeed, let $s=t=0.2$. We have 
\be
\B_{0.2,0.2}(v)=
\begin{bmatrix}
\partial_1 v_1 &0.8\partial_1 v_2+0.8\partial_2 v_1\\
0.2\partial_1 v_2+0.2\partial_2 v_1 & \partial_2 v_2
\end{bmatrix}.
\ee
Rewriting the matrix above as a vector in $\R^4$, we obtain
\begin{align}
\B_{0.2,0.2}(v)&=[\partial_1 v_1, 0.2(\partial_1 v_2+\partial_2 v_1),0.8(\partial_1 v_2+\partial_2 v_1),\partial_2v_2]\\
&=\begin{bmatrix}
1 & 0& 0& 0\\
0 &0.8 &0.8 &0\\
0& 0.2& 0.2& 0\\
0 &0 & 0& 1
\end{bmatrix}
\cdot  [\partial_1 v_1, \partial_1 v_2, \partial_2v_1,\partial_2v_2]^T.
\end{align}
That is, we would have 
\be
QD'Q = \begin{bmatrix}
1 & 0& 0& 0\\
0 &0.8 &0.8 &0\\
0& 0.2& 0.2& 0\\
0 &0 & 0& 1
\end{bmatrix}
\text{ or } 
D' = \begin{bmatrix}
0 & 0& 0& 0.3\\
0 &0.5 &0 &0\\
0& 0& 0.5& 0\\
0 &0 & 0& 0.5
\end{bmatrix},
\ee
which are not diagonal matrices. Hence, this example shows that our model indeed covers more operators that those discussed in \cite{2018arXiv180201895B}.

\subsection{Numerical simulations and observations}\label{sec:num}
Let $\A$ be the operator defined in Subsection \ref{uatta_sec}, and let
\be
\Sigma[\Aep]:= \flp{\B_{s,t}:\,\,s,t\in[0,1]}
\ee
where, for $0\leq s,t\leq 1$, $\B_{s,t}$ are the first order operators introduced in \eqref{skewed_symmetric_gradient}. As we remarked before,  the seminorm $PGV^2_{\B_{s,t}}$ interpolates between the $TGV^2$ and $NsTGV^2$ regularizers. We define the {cost function} $\CC(\alpha, s,t)$ to be
\be\label{cost_fun}
\CC(\alpha, s,t):=\norm{u_{\alpha,\B_{s,t}}-u_c}_{L^2(Q)}.
\ee
From Theorem \ref{main_thm} we have that $\CC(\alpha, s,t)$ admits at least one minimizer $(\ta,\tilde s,\tilde t)\in \R^+\times[0,1]\times[0,1]$.\\\\
To explore the numerical landscapes of the cost function $\mathcal C(\alpha,s,t)$, we consider the discrete box-constraint 
\begin{multline}\label{parameter_domain}
(\alpha_0,\alpha_1, s,t)\in \flp{0.025,\,0.05,\,0.075,\ldots, 1}\\
\times \flp{0.025,\,0.05,\,0.075,\ldots, 1}\times\flp{0,\,0.025,\,0.05,\,\ldots,\,1}\times\flp{0,\,0.025,\,0.05,\,\ldots,\,1}.
\end{multline}
We perform numerical simulations of the images shown in Figure \ref{fig:clean_noise}: the first image represents a clean image $u_c$, whereas the second one is a noised version $u_\eta$, with heavy artificial Gaussian noise. The reconstructed image $u_{\alpha,\B}$ in Level 2 of our training scheme is computed by using the primal-dual algorithm presented in \cite{chambolle2011first}.

\begin{figure}[!h]
  \centering
        \includegraphics[width=1.0\linewidth]{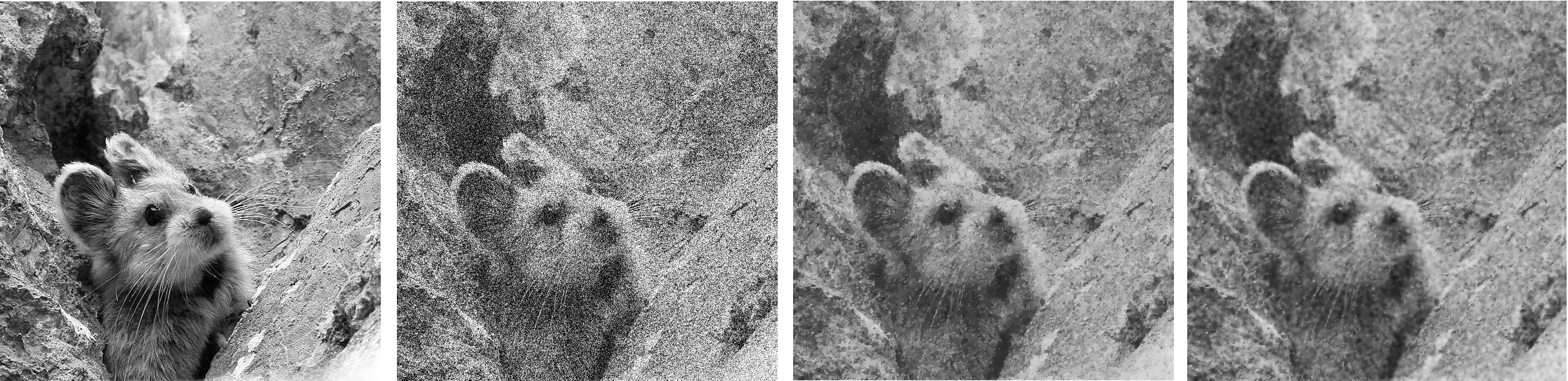}
\caption{From left to right: the test image of a Pika; a noised version (with heavy artificial Gaussian noise); the optimally reconstructed image with $TGV$ regularizer; the optimally reconstructed image with $PGV$ regularizer.}
\label{fig:clean_noise}
\end{figure}

It turns out that the minimum value of \eqref{cost_fun}, taking values in \eqref{parameter_domain}, is achieved at $\ta_0=5.6$, $\ta_1=1.2$, $\tilde s=0.8$, and $\tilde t=0.2$. The optimal reconstruction $u_{\ta,\B_{\tilde s,\tilde t}}$ is the last image in Figure \ref{fig:clean_noise}, whereas the optimal result with $\B_{s,t}\equiv\mathcal E$, i.e., $u_{\ta,TGV}$, is the third image in Figure \ref{fig:clean_noise}. Although the optimal reconstructed image $u_{\ta,\B_{\tilde s,\tilde t}}$ and $u_{\ta,\mathcal E}$ do not present too many differences to the naked eye, we do have that
\be
\CC(\ta,\tilde s,\tilde t)<\CC(\ta,0.5,0.5)
\ee
( see also Table \ref{table_test_result} below). That is, the reconstructed image $u_{\ta,\B_{\tilde s,\tilde t}}$ is indeed ``better" in the sense of our training scheme ($L^2$-difference). 

\begin{table}[!h]
\begin{tabular}{|l|l|l|l|l|l|}
\hline
Regularizer & optimal solution & minimum cost value   \\ \hline
$TGV^2$ & $\tilde \alpha_0=0.074$, $\tilde \alpha_1=0.625$  & $\CC(\ta,0.5,0.5)=18.653$    \\ \hline
$PGV^2$&$\tilde \alpha_{0}=0.072$, $\tilde \alpha_1=0.575$, $\tilde s=0.95$, $\tilde t=0.05$ & $\CC(\ta,\tilde s,\tilde t)= 17.6478 $  \\ \hline
\end{tabular}
\caption{minimum cost value with different regularizers. The minimum value of the cost function for the $PGV^2$- regularizer is approximately $5\%$ below that of the $TGV^2$- regularizer.}\label{table_test_result}
\end{table}

\vspace{-.5cm}

To visualize the change of cost function produced by different values of $(s,t)\in [0,1]^2$, we fix $\bar \alpha_0=0.072$ and $\bar \alpha_1=0.575$ and plot in Figure \ref{fig:mesh_contour_0_1_skew} the mesh and contour plot of $\mathcal C(\bar \alpha,s,t)$.

\begin{figure}[!h]
\begin{subfigure}{.495\textwidth}
  \centering
        \includegraphics[width=1.0\linewidth]{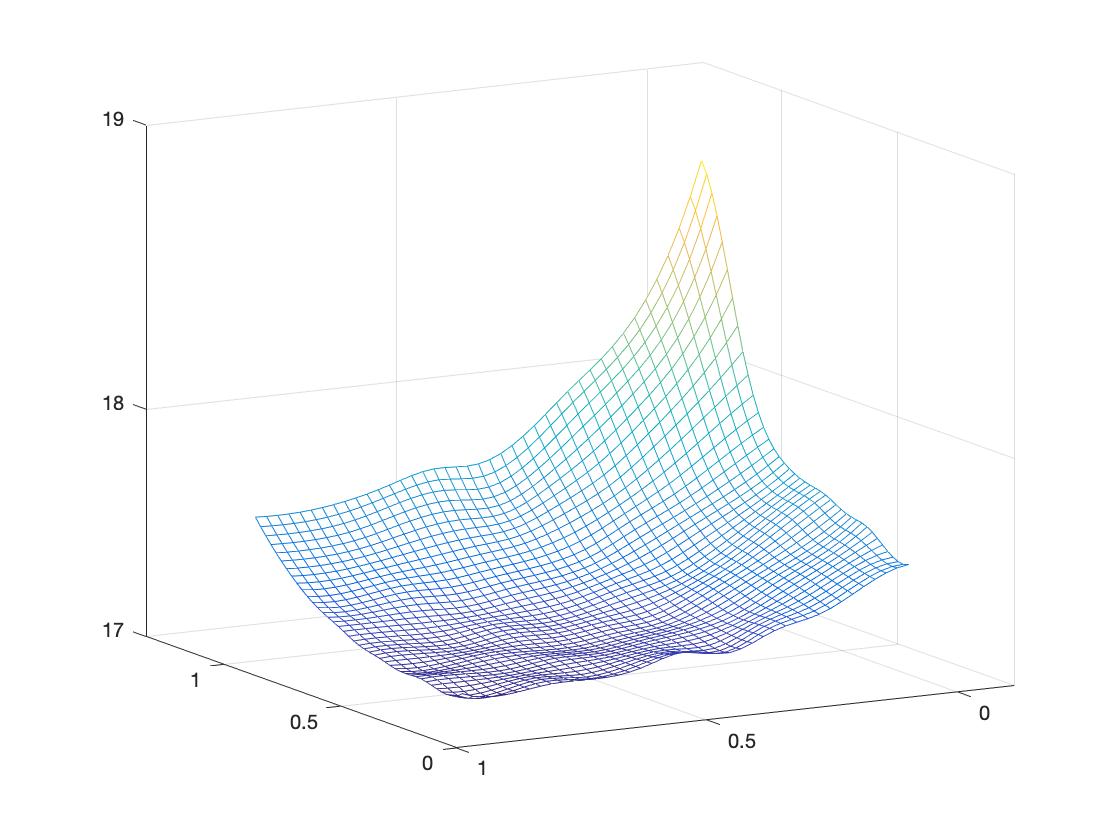}
\end{subfigure}
\begin{subfigure}{.495\textwidth}
  \centering
        \includegraphics[width=1.0\linewidth]{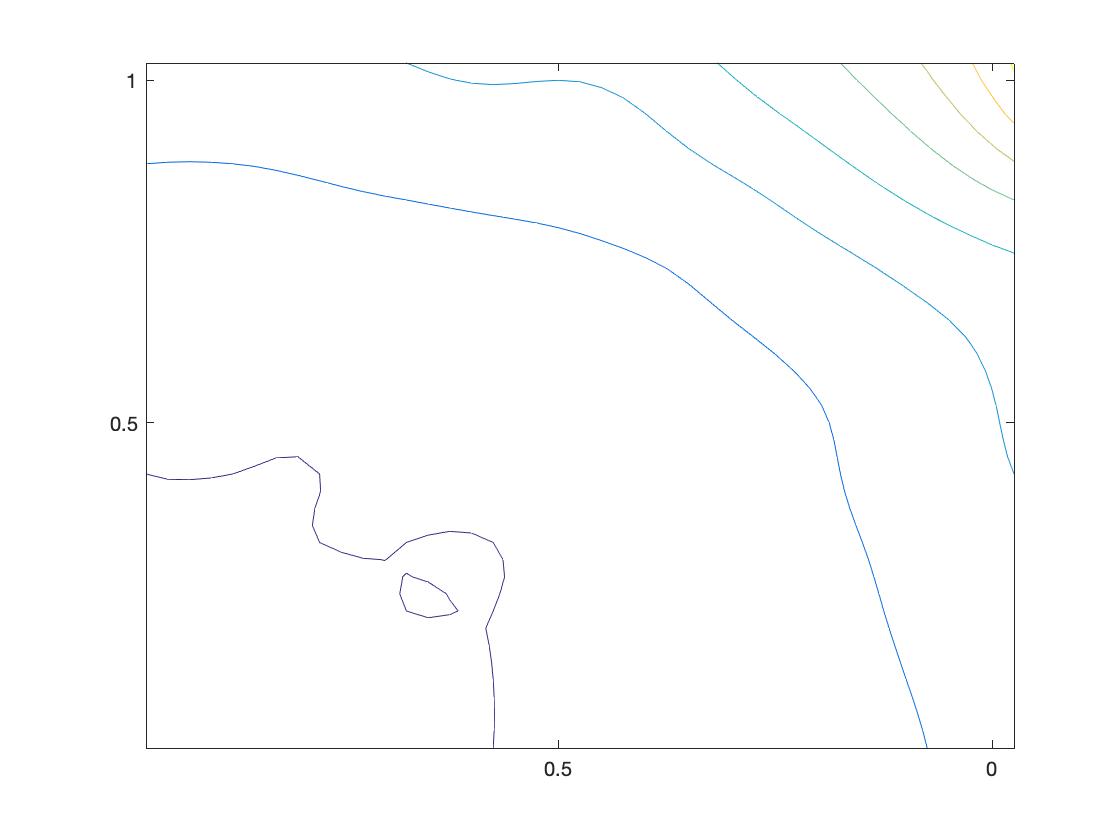}
\end{subfigure}
\caption{From the left to the right: mesh and contour plot of the cost function $\mathcal C(\bar \alpha,s,t)$ in which $\bar \alpha=(\bar \alpha_0,\bar\alpha_1)$ is fixed, $(s,t)\in[0,1]^2$.}
\label{fig:mesh_contour_0_1_skew}
\end{figure}
We again remark that the introduction of $PGV_{\alpha,\B[k]}$ regularizers into the training scheme is only meant to expand the training choices, but not to provide a superior seminorm with respect to the popular choices $TGV^2$ or $NsTGV^2$. The fact whether the optimal regularizer is $TGV^2$, $NsTGV^2$ or an intermediate regularizer is completely dependent on the given training image $u_\eta=u_c+\eta$.

\section*{Acknowledgements}
The work of Elisa Davoli has been funded by the Austrian Science Fund (FWF) project F65 ``Taming complexity in partial differential systems". Irene Fonseca thanks the Center for Nonlinear Analysis for its support during the preparation of the manuscript. She was supported by the National Science Foundation under Grand No. DMS-1411646. The work of Pan Liu has been supported by the Centre of Mathematical Imaging and Healthcare and funded by the Grant ''EPSRC Centre for Mathematical and Statistical Analysis of Multimodal Clinical Imaging" with No. EP/N014588/1. All authors are thankful to the Erwin Schr\"odinger Institute in Vienna, where part of this work has been developed during the workshop ``New trends in the variational modeling of failure phenomena".

\end{document}